\theoremstyle{plain}
\newtheorem{theorem}{Theorem}[section]
\newtheorem{proposition}[theorem]{Proposition}
\newtheorem{lemma}[theorem]{Lemma}
\theoremstyle{definition}
\newtheorem{remark}[theorem]{Remark}
\newtheorem{notation}[theorem]{Notation}
\newcommand{\CC}{\mathbb{C}}
\newcommand{\FF}{\mathbb{F}}
\newcommand{\GG}{\mathbb{G}}
\newcommand{\PP}{\mathbb{P}}
\newcommand{\ZZ}{\mathbb{Z}}
\newcommand{\LLambda}{\boldsymbol{\Lambda}}
\newcommand{\F}{\mathcal{F}}
\newcommand{\G}{\mathcal{G}}
\newcommand{\I}{\mathcal{I}}
\newcommand{\J}{\mathcal{J}}
\newcommand{\K}{\mathcal{K}}
\newcommand{\Osh}{\mathcal{O}}
\newcommand{\Z}{\mathcal{Z}}
\newcommand{\Extsh}{{\mathcal Ext}}
\newcommand{\Kersh}{{\mathcal Ker}}
\newcommand{\Bf}{\mathfrak{B}}
\newcommand{\If}{\mathfrak{I}}
\newcommand{\Nf}{\mathfrak{N}}
\newcommand{\Aut}{\operatorname{Aut}}
\newcommand{\Ann}{\operatorname{Ann}}
\newcommand{\chern}{\operatorname{c}_1^{}}
\newcommand{\End}{\operatorname{End}}
\newcommand{\euler}{\operatorname{\chi}}
\newcommand{\eulertop}{\operatorname{e}}
\newcommand{\ext}{\operatorname{ext}}
\newcommand{\Ext}{\operatorname{Ext}}
\def\H{\operatorname{H}}
\newcommand{\Hom}{\operatorname{Hom}}
\newcommand{\id}{\operatorname{id}}
\newcommand{\Img}{\operatorname{Im}}
\newcommand{\incl}{\operatorname{in}}
\newcommand{\Grot}{\operatorname{K}}
\newcommand{\Ker}{\operatorname{Ker}}
\newcommand{\length}{\operatorname{length}}
\newcommand{\mult}{\operatorname{r}}
\newcommand{\p}{\operatorname{p}}
\newcommand{\Part}{\operatorname{\mathcal{P}}}
\newcommand{\Poly}{\operatorname{P}\!\!}
\newcommand{\pr}{\operatorname{pr}}
\newcommand{\rank}{\operatorname{rank}}
\newcommand{\thespan}{\operatorname{span}}
\newcommand{\Spec}{\operatorname{Spec}}
\newcommand{\Stab}{\operatorname{Stab}}
\newcommand{\Sym}{\operatorname{Sym}}
\newcommand{\pss}{\mathrm{pss}}
\newcommand{\red}{\mathrm{red}}
\newcommand{\ses}{\mathrm{ss}}
\newcommand{\st}{\mathrm{st}}
\newcommand{\dual}{{\scriptscriptstyle \mathrm{D}}}
\newcommand{\ddual}{{\scriptscriptstyle \mathrm{DD}}}
\newcommand{\Rt}{\mathrm{rt}}
\newcommand{\Lt}{\mathrm{lt}}
\newcommand{\Asing}{\mathrm{A}_{\mathrm{sing}}^{}}
\newcommand{\alphamax}{\alpha_{\mathrm{max}}^{}}
\newcommand{\fiberR}{\mathrm{F}_{\mathrm{rt}}^{}}
\newcommand{\fiberL}{\mathrm{F}_{\mathrm{lt}}^{}}
\newcommand{\GL}{\mathrm{GL}}
\newcommand{\GM}{\mathrm{G}_M^{}}
\newcommand{\Hilb}{\mathrm{Hilb}}
\newcommand{\M}{\mathrm{M}}
\newcommand{\MX}{\mathrm{M}_X^{\alpha}(r, t)}
\newcommand{\MXinf}{\mathrm{M}_X^{\infty}(r, t)}
\newcommand{\MXzero}{\mathrm{M}_X^{0+}(r, t)}
\newcommand{\MXprime}{\mathrm{M}_X^{\alpha}(r', t')^\st}
\newcommand{\MXsecond}{\mathrm{M}_X^{}(r'', t'')}
\newcommand{\pt}{\mathrm{pt}}
\newcommand{\lra}{\longrightarrow}
\newcommand{\tensor}{\otimes}
\newcommand{\rep}[2]{\left( \!\!\! \binom{#1}{#2} \!\!\! \right)}
\newcommand{\Sqcup}{\text{\LARGE $\sqcup$}}
\begin{document}

\setdefaultleftmargin{19pt}{}{}{}{}{}

\title[Variation of moduli spaces of coherent systems of dimension one]
{Variation of moduli spaces of coherent systems of dimension one and order one}

\author{Mario Maican}
\address{Institute of Mathematics of the Romanian Academy, Calea Grivitei 21, Bucharest 010702, Romania}

\email{maican@imar.ro}

\begin{abstract}
We study the wall-crossing for moduli spaces of coherent systems of dimension one and order one
on a smooth projective variety over the complex numbers.
We compute the topological Euler characteristic of the moduli spaces
in the particular case when the variety is a quadric surface,
the first Chern class of the coherent systems is of the form $(2, r)$
and the second Chern class is bounded from below by $3 r + 1$ and also by $4 r - 8$.
\end{abstract}

\subjclass[2010]{Primary 14D20, 14D22}
\keywords{Moduli spaces, Coherent systems, Topological Euler characteristic, Variational GIT}

\maketitle

\section{Introduction}
\label{introduction}

\noindent
Let $X$ be a smooth projective polarized variety over $\CC$.
Let $r > 0$ and $t$ be integers.
Let $\alpha$ be a positive real number.
We denote by $\MX$ the coarse moduli space, constructed by Le~Potier in \cite{lepotier_asterisque},
of S-equivalence classes $\langle \Lambda \rangle$
of $\alpha$-semi-stable coherent systems $\Lambda = (\Gamma, \F)$ on $X$ having order $\dim_\CC^{} \Gamma = 1$
and Hilbert polynomial $\Poly_\F^{}(x) = r x + t$.
We denote by $\M_X^{}(r, t)$ the coarse moduli space, constructed by Simpson in \cite{simpson},
of S-equivalence classes $\langle \F \rangle$ of Gieseker-semi-stable coherent sheaves $\F$ on $X$
having Hilbert polynomial $\Poly_\F^{}(x) = r x + t$.
Let
\[
\MX^\st \subset \MX \qquad \text{and} \qquad \MX^\pss = \MX \smallsetminus \MX^\st
\]
be the open subscheme of isomorphism classes of $\alpha$-stable coherent systems,
respectively, the closed subscheme of properly $\alpha$-semi-stable coherent systems.
We equip the latter with the induced reduced structure.
We say that $\alpha$ is \emph{singular} relative to the polynomial $P(x) = r x + t$ if $\MX^\pss \neq \emptyset$.
Relative to a fixed polynomial of degree $1$, there are finitely many singular values $\alpha_1^{} < \dots < \alpha_m^{}$.
We write $\alpha_0^{} = 0$, $\alpha_{m + 1}^{} = \infty$.
The space $\MX$ remains unchanged as $\alpha$ varies in an interval $(\alpha_i^{}, \alpha_{i+1}^{})$.
At a singular value $\alpha = \alpha_i^{}$ we have the wall-crossing diagram~\eqref{wall-crossing}.
We say that $\Lambda$ is \emph{$0^{+}$-stable} if $\Lambda$ is $\alpha$-stable for $\alpha \in (0, \alpha_1^{})$.
We say that $\Lambda$ is \emph{$\infty$-stable} if $\Lambda$ is $\alpha$-stable for $\alpha \in (\alpha_m^{}, \infty)$.
We write
\begin{alignat*}{3}
& \MXzero && = \MX \qquad && \text{for $\alpha \in (0, \alpha_1^{})$}, \\
& \MXinf && = \MX \qquad && \text{for $\alpha \in (\alpha_m^{}, \infty)$}.
\end{alignat*}
One reason why the spaces $\MXzero$ are interesting to study is the fact that they are closely related to Simpson's moduli spaces.
Indeed, if $(\Gamma, \F)$ is $0^{+}$-stable, then $\F$ is Gieseker-semi-stable.
Thus, we have the \emph{forgetful morphism}
\[
\phi \colon \MXzero \lra \M_X^{}(r, t) \quad \text{given on closed points by} \quad \phi(\langle (\Gamma, \F) \rangle) = \langle \F \rangle.
\]
Notice that
\begin{equation}
\label{phi}
\phi^{-1}(\langle \F \rangle) \simeq \PP(\H^0(\F)) \quad \text{for a stable sheaf} \quad \F.
\end{equation}
By the work of Pandharipande and Thomas \cite{pandharipande_thomas}, the space $\MXinf$ is isomorphic
to a disjoint union of flag Hilbert schemes of points on curves contained in $X$.
At Proposition~\ref{M_infinity} we spell out this isomorphism in the case when $X = \PP^1 \times \PP^1$.
The aim of this paper is to relate the geometry of $\MXzero$ to the geometry of $\MXinf$, which is easier to understand.

Our technique will be to decompose each $\M_X^{\alpha_i}(r, t)^\pss$ into finitely many locally closed subvarieties $S$,
such that the Euler characteristic of $S$, of its right fiber $\fiberR S$ and of its left fiber $\fiberL S$ in diagram~\eqref{wall-crossing}, can be computed.
This technique has already been used by Choi and Chung~\cite{choi_chung}, in the study of moduli spaces of sheaves on $\PP^2$,
and by the author, in the study of moduli spaces of sheaves on $\PP^1 \times \PP^1$ supported on curves of small genus,
see~\cite{dedicata_2}, \cite{algebra}, \cite{advances}.

Our first main result is Theorem~\ref{orbits},
which describes the right and left fibers over a properly $\alpha$-semi-stable point $\langle \Lambda \rangle$.
Let $\Lambda'$ be the term of order $1$ of the Jordan-H\"older filtration of $\Lambda$.
Let $\Theta$ be the direct sum of the terms of order zero, i.e.\ sheaves, of the Jordan-H\"older filtration of $\Lambda$.
Then we have the finite decompositions
\[
\fiberL \langle \Lambda \rangle
= \bigsqcup_{\langle \Lambda'' \rangle = \langle \Theta \rangle} \Ext^1(\Lambda'', \Lambda')^\ses / \Aut(\Lambda'')
\quad \text{and} \quad
\fiberR \langle \Lambda \rangle
= \bigsqcup_{\langle \Lambda'' \rangle = \langle \Theta \rangle} \Ext^1(\Lambda', \Lambda'')^\ses / \Aut(\Lambda'').
\]
The open subsets of \emph{semi-stable extensions} $\Ext^1(\Lambda'', \Lambda')^\ses$ and $\Ext^1(\Lambda', \Lambda'')^\ses$
can be described explicitly.
The quotients modulo $\Aut(\Lambda'')$ are set-theoretic, that is, they are the sets of orbits for the action of $\Aut(\Lambda'')$.
At this time we do not know if these are good or geometric quotients in the sense of GIT.
Our second main result, Theorem~\ref{euler_fibers}, concerns the topological Euler characteristic
$\eulertop(\fiberR \langle \Lambda \rangle)$ and $\eulertop(\fiberL \langle \Lambda \rangle)$
in the particular case when $X = \PP^1 \times \PP^1$ and all terms of the Jordan-H\"older filtration of $\Lambda$
of order zero are supported on lines of degree $(0, 1)$.
In this case $\Aut(\Lambda'') \simeq U \rtimes P$,
where $U$ is a unipotent group and $P$ is a parabolic subgroup of a general linear group.
One approach would be to quotient $\Ext^1(\Lambda', \Lambda'')^\ses$ in stages, first modulo $U$, then modulo $P$.
However, in general it is not known whether quotients modulo unipotent groups, in the sense of GIT, exist.
Instead, we first construct explicitly a geometric quotient $Q = \Ext^1(\Lambda', \Lambda'')^\ses / P$.
We now consider the induced morphism $Q \to \fiberR \langle \Lambda \rangle$.
Roughly speaking, its fibers coincide with the orbits for an algebraic action of $U$ on $Q$.
This allows us to conclude that $\eulertop(\fiberR \langle \Lambda \rangle) = \eulertop(Q)$.

If $X = \PP^1 \times \PP^1$, the moduli space $\M_X^\alpha(\bar{r}, t)$ breaks into disconnected components
$\M_X^\alpha((s, r), t)$ given by the condition $\chern(\F) = (s, r)$.
Here $\bar{r} = r + s$.
Likewise, we define $\M_X^{}((s, r), t)$.
As an application of the main results,
we compute the topological Euler characteristic of $\M_X^{0+}((2, r), t)$ for $t \le 1$ and $3 \le r + t \le 10$,
see equations~\eqref{r+t=3}--\eqref{r+t=10}.
As a consequence, we determine the topological Euler characteristic of $\M_X^{}((2, r), 1)$ for $2 \le r \le 9$,
see equations~\eqref{r=2}--\eqref{r=9}.

The paper is organized as follows.
In section~\ref{right_left} we study the right and left fibers in general.
One important ingredient in this study is the existence of Harder-Narasimhan filtrations of a special kind, see Lemma~\ref{HN}.
In section~\ref{dimension} we restrict our study to the case when $X$ is a surface.
We compute the dimension of  $\Ext^1(\Lambda', \Lambda'')$ and $\Ext^1(\Lambda'', \Lambda')$.
For K3~surfaces and del~Pezzo surfaces we have more specific results, see Proposition~\ref{ext_delPezzo}.
In section~\ref{automorphisms} we show that the group of automorphisms of a skyscraper sheaf $\Z$ on $\PP^1$ is of the form $U \rtimes P$,
where $U$ is a unipotent group and $P$ is a parabolic subgroup of a general linear group.
In section~\ref{topological} we restrict our attention to the case when $X = \PP^1 \times \PP^1$ and $\Lambda'' = \pr_2^* \Z$.
Section~\ref{homomorphisms} deals with certain subsets of the flag Hilbert scheme,
which arise out of the formula for $\eulertop(\fiberL \langle \Lambda \rangle)$.
The results of sections~\ref{topological} and \ref{homomorphisms} are applied in section~\ref{formulas}
to moduli spaces of coherent systems on $\PP^1 \times \PP^1$ with first Chern class $(2, r)$.

\section{Right and left fibers over the properly semi-stable loci}
\label{right_left}

\noindent
In this section $(X, \Osh_X^{}(1))$ will denote a smooth projective polarized variety over $\CC$.
Consider a coherent system $\Lambda = (\Gamma, \F)$ on $X$ of Hilbert polynomial $\Poly_{\F}^{}(x) = r x + t$, where $r > 0$.
The \emph{order} of $\Lambda$ is $\dim_\CC^{} \Gamma$.
The \emph{multiplicity} of $\Lambda$ is $\mult(\Lambda) = r$.
The \emph{Euler characteristic} of $\Lambda$ is $\euler(\Lambda) = t$.
Let $\alpha$ be a positive real number.
The \emph{$\alpha$-slope} of $\Lambda$ with respect to $\Osh_X^{}(1)$ is
\[
\p_{\alpha}^{} (\Lambda) = \frac{\alpha \dim_\CC^{} \Gamma + t}{r}.
\]
We say that $\Lambda$ is \emph{$\alpha$-semi-stable} with respect to $\Osh_X^{}(1)$
if $\F$ is pure, i.e.\ it contains no subsheaves of dimension zero,
and $\p_{\alpha}^{}(\Lambda') \le \p_{\alpha}^{}(\Lambda)$
for any proper coherent subsystem $\Lambda' \subset \Lambda$.
If we impose strict inequality, then $\Lambda$ is said to be \emph{$\alpha$-stable}.
If $\Lambda$ has order zero, i.e.\ $\Lambda$ is a coherent sheaf,
then $\p_{\alpha}^{}(\Lambda) = \p(\Lambda)$ is the usual slope with respect to $\Osh_X^{}(1)$,
and we recover the notion of semi-stability in the sense of Gieseker-Maruyama.

According to \cite[Th\'eor\`eme 4.3]{he}, if $\alpha$ is non-singular, then $\MX$ is a fine moduli space,
i.e.\ there exists a universal family $\LLambda$ of $\alpha$-stable coherent systems on $\MX \times X / \MX$.
Note that $\LLambda$ is flat relative to the base $\MX$.

Consider a singular value $\alpha$ relative to the polynomial $r x + t$.
Let $\epsilon$ be a small positive real number.
Given $\langle \Lambda \rangle \in \M_X^{\alpha + \epsilon}(r, t)$ and a proper subsystem $\Lambda' \subset \Lambda$,
we have the inequality $\p_{\alpha + \epsilon}^{}(\Lambda') < \p_{\alpha + \epsilon}^{}(\Lambda)$.
Taking limit as $\epsilon \to 0$, we obtain the inequality $\p_{\alpha}^{}(\Lambda') \le \p_{\alpha}^{}(\Lambda)$.
This proves that $\Lambda$ is $\alpha$-semi-stable.
The universal family $\LLambda$ on $\M_X^{\alpha + \epsilon}(r, t) \times X / \M_X^{\alpha + \epsilon}(r, t)$ is, therefore,
a flat family of $\alpha$-semi-stable coherent systems.
By the universal property of a coarse moduli space,
$\LLambda$ induces the morphism $\rho_{\alpha + \epsilon}^{}$ from the diagram below.
Analogously, we construct the morphism $\rho_{\alpha - \epsilon}^{}$, so that we obtain the \emph{wall-crossing diagram}
\begin{equation}
\label{wall-crossing}
\xymatrix
{
\M_X^{\alpha - \epsilon}(r, t) \ar[rd]_-{\rho_{\alpha - \epsilon}^{}} & &
\M_X^{\alpha + \epsilon}(r, t) \ar[ld]^-{\rho_{\alpha + \epsilon}^{}} \\
& \MX
}.
\end{equation}
Assume that there exist integers $r' > 0$, $r'' > 0$, $t'$ and $t''$ such that $(\alpha + t') / r' = t'' / r''$, $r = r' + r''$ and $t = t' + t''$.
Assume further that $\MXprime \neq \emptyset$ and $\MXsecond \neq \emptyset$.
Then $\alpha$ is singular and there is a morphism of schemes
\[
\MXprime \times \MXsecond \lra \MX \quad \text{given on closed points by} \quad
(\langle \Lambda' \rangle, \langle \Lambda'' \rangle ) \longmapsto \langle \Lambda' \oplus \Lambda'' \rangle.
\]
The induced morphism of reduced schemes
\[
\MXprime_\red \times \MXsecond_\red^{} \lra \MX^\pss
\]
is injective because the Jordan-H\"older filtration of an $\alpha$-semi-stable coherent system of order $1$
contains precisely one term or order $1$, the others being sheaves.
Thus, the morphism
\[
\gamma \colon \bigsqcup_{r' = 1}^{r - 1}
\M_X^{\alpha} \big( r', \, \tfrac{r'(\alpha + t)}{r} - \alpha \big)^\st_\red \times \ \M_X^{} \big( r - r', \, \tfrac{(r - r')(\alpha + t)}{r} \big)_\red \lra \MX^\pss
\]
induces a bijection between the sets of closed points.
We shall abusively say that $\MX^\pss$ \emph{contains} the space $\MXprime \times \MXsecond$
if the reduction of the latter occurs in the domain of $\gamma$.
Consider a locally closed subscheme $Y \subset \MXprime_\red \times \MXsecond_\red^{}$.
Consider the cartesian diagrams
\[
\xymatrix
{
Y_\Rt^{} \ar[r] \ar[d] & \M_X^{\alpha + \epsilon}(r, t) \ar[d]^-{\rho_{\alpha + \epsilon}}
& \text{and} & Y_\Lt^{} \ar[d] \ar[r] & \M_X^{\alpha - \epsilon}(r, t) \ar[d]^-{\rho_{\alpha - \epsilon}} \\
Y \ar[r]^-{\gamma \circ \incl_Y^{}} & \MX & & Y \ar[r]^-{\gamma \circ \incl_Y^{}} & \MX
}.
\]
We shall write $\fiberR Y = (Y_\Rt^{})_\red^{}$ and $\fiberL Y = (Y_\Lt^{})_\red^{}$.

\begin{proposition}
\label{fibers}
Assume that $\alpha$ is a singular value relative to the polynomial $P(x) = r x + t$.
Assume that $\MX^\pss$ contains the space $\MXprime \times \MXsecond$.
Consider closed points $\langle \Lambda' \rangle \in \MXprime$ and $\langle \Theta \rangle \in \MXsecond$.
\begin{enumerate}
\item[\emph{(i)}]
Assume that $\langle \Lambda \rangle \in \fiberR (\langle \Lambda' \rangle, \langle \Theta \rangle)$.
Then there exists a semi-stable sheaf $\Lambda''$ such that $\langle \Lambda'' \rangle = \langle \Theta \rangle$
and there exists an extension
\[
0 \lra \Lambda'' \lra \Lambda \lra \Lambda' \lra 0
\]
satisfying the following property:
for any stable sheaf $\Delta''$ of the same slope as $\Lambda''$, and for any surjective morphism $\Lambda'' \to \Delta''$,
the image of $\Lambda$ under the induced morphism
\[
\delta_\Rt^{} \colon \Ext^1(\Lambda', \Lambda'') \lra \Ext^1(\Lambda', \Delta'')
\]
is non-zero.
We denote by $\Ext^1(\Lambda', \Lambda'')^\ses$ the set of such extensions.
Conversely, if $\Lambda \in \Ext^1(\Lambda', \Lambda'')^\ses$
for some semi-stable sheaf $\Lambda''$ satisfying $\langle \Lambda'' \rangle = \langle \Theta \rangle$,
then $\Lambda$ is $(\alpha + \epsilon)$-stable and $\langle \Lambda \rangle \in \fiberR(\langle \Lambda' \rangle, \langle \Theta \rangle)$.
\item[\emph{(ii)}]
Assume that $\langle \Lambda \rangle \in \fiberL (\langle \Lambda' \rangle, \langle \Theta \rangle)$.
Then there exists a semi-stable sheaf $\Lambda''$ such that $\langle \Lambda'' \rangle = \langle \Theta \rangle$
and there exists an extension
\[
0 \lra \Lambda' \lra \Lambda \lra \Lambda'' \lra 0
\]
satisfying the following property:
for any stable sheaf $\Delta''$ of the same slope as $\Lambda''$, and for any injective morphism $\Delta'' \to \Lambda''$,
the image of $\Lambda$ under the induced morphism
\[
\delta_\Lt^{} \colon \Ext^1(\Lambda'', \Lambda') \lra \Ext^1(\Delta'', \Lambda')
\]
is non-zero.
We denote by $\Ext^1(\Lambda'', \Lambda')^\ses$ the set of such extensions.
Conversely, if $\Lambda \in \Ext^1(\Lambda'', \Lambda')^\ses$
for some semi-stable sheaf $\Lambda''$ satisfying $\langle \Lambda'' \rangle = \langle \Theta \rangle$,
then $\Lambda$ is $(\alpha - \epsilon)$-stable and $\langle \Lambda \rangle \in \fiberL(\langle \Lambda' \rangle, \langle \Theta \rangle)$.
\end{enumerate}
\end{proposition}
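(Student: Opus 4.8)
The plan is to prove both parts in parallel, using their symmetry, by splitting each equivalence into an \emph{extraction} half — producing the short exact sequence from a Jordan-H\"older filtration of $\Lambda$ — and a \emph{recognition} half — identifying which extension classes arise, via a destabilization argument. Write $\mu = \p_\alpha(\Lambda) = (\alpha + t')/r' = t''/r''$. I will use two numerical facts: that an $(\alpha \pm \epsilon)$-stable system is $\alpha$-semi-stable (as in the discussion preceding the proposition, applied at $\alpha + \epsilon$ and, symmetrically, at $\alpha - \epsilon$); and that a subsystem of order one, $\alpha$-slope $\mu$ and multiplicity $\rho < r$ has $(\alpha + \epsilon)$-slope $\mu + \epsilon/\rho > \p_{\alpha + \epsilon}(\Lambda)$ and $(\alpha - \epsilon)$-slope $\mu - \epsilon/\rho < \p_{\alpha - \epsilon}(\Lambda)$, whereas a subsheaf of slope $\mu$ has $(\alpha \pm \epsilon)$-slope exactly $\mu$. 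I will also use that every Jordan-H\"older factor of an $\alpha$-semi-stable system of order one and multiplicity $r > 0$ has positive multiplicity, and that $\Lambda$ itself has a nonzero section, so that every order-one subsystem has positive multiplicity. Throughout, $\epsilon$ is taken small enough to be dominated by each of the finitely many positive gaps between the slopes that occur (these involve only subsystems of the fixed objects $\Lambda'$ and $\Lambda''$).

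For the extraction half of (i): assuming $\langle \Lambda \rangle \in \fiberR(\langle \Lambda' \rangle, \langle \Theta \rangle)$, the system $\Lambda$ is $\alpha$-semi-stable and the associated graded of a Jordan-H\"older filtration is S-equivalent to $\Lambda' \oplus \Theta$; since $\langle \Lambda' \rangle$ is $\alpha$-stable, $\Lambda'$ is, up to isomorphism, the unique order-one factor. I would then show this factor is the top quotient: otherwise the initial segment of the filtration up through it is a proper order-one subsystem of $\alpha$-slope $\mu$ and multiplicity $< r$, hence of $(\alpha + \epsilon)$-slope exceeding $\p_{\alpha + \epsilon}(\Lambda)$, contradicting $(\alpha + \epsilon)$-stability. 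Thus the penultimate step is a semi-stable sheaf $\Lambda''$ with $\langle \Lambda'' \rangle = \langle \Theta \rangle$ and $0 \to \Lambda'' \to \Lambda \to \Lambda' \to 0$. For the semi-stable extension condition I would argue by contradiction: if $\delta_\Rt(\Lambda) = 0$ for some surjection $\pi \colon \Lambda'' \to \Delta''$ onto a stable sheaf of slope $\mu$, the pushed-out sequence $0 \to \Delta'' \to \Lambda/\Ker \pi \to \Lambda' \to 0$ would split, so $\Lambda'$ would lift to a subsystem $\Sigma \subseteq \Lambda$ containing $\Ker \pi$ with $\Sigma/\Ker \pi \cong \Lambda'$; since $\Ker \pi$ has slope $\mu$ and multiplicity $< r''$, one computes $\p_{\alpha + \epsilon}(\Sigma) = \mu + \epsilon/(\mult(\Ker \pi) + r') > \p_{\alpha + \epsilon}(\Lambda)$ while $\Sigma$ is proper ($\Delta'' \neq 0$) — a contradiction.

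For the recognition half of (i), and the mirror case (ii): given $\Lambda \in \Ext^1(\Lambda', \Lambda'')^\ses$, I would first observe that $\Lambda$ is $\alpha$-semi-stable (an extension of $\alpha$-semi-stable objects of slope $\mu$) with graded object S-equivalent to $\Lambda' \oplus \Theta$, so that once $(\alpha + \epsilon)$-stability is established, $\rho_{\alpha + \epsilon}$ carries $\langle \Lambda \rangle$ to $\gamma(\langle \Lambda' \rangle, \langle \Theta \rangle)$, placing $\langle \Lambda \rangle$ in $\fiberR(\langle \Lambda' \rangle, \langle \Theta \rangle)$. To prove stability I would run through the proper subsystems $\Sigma$: order-zero $\Sigma$ has $(\alpha + \epsilon)$-slope at most $\mu$ and is harmless; an order-one $\Sigma$ mapping properly into $\Lambda'$ is controlled by $\alpha$-stability of $\Lambda'$ together with the slope bound on $\Sigma \cap \Lambda''$; and the critical case — $\Sigma$ of order one surjecting onto $\Lambda'$ with $K := \Sigma \cap \Lambda'' \subsetneq \Lambda''$ — reduces to the previous paragraph, for such a $\Sigma$ forces the pushout of $\Lambda$ along $\Lambda'' \to \Lambda''/K$ to split, hence, composing with a stable quotient of $\Lambda''/K$, gives $\delta_\Rt(\Lambda) = 0$, unless $K$ fails to have slope $\mu$, in which case a slope deficit closes the case directly. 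Part (ii) is obtained by exchanging sub and quotient: the order-one factor is forced to the bottom of the Jordan-H\"older filtration (a sheaf factor there would be a subsheaf of slope $\mu$, destabilizing for $\alpha - \epsilon$), giving $0 \to \Lambda' \to \Lambda \to \Lambda'' \to 0$; the semi-stable extension condition translates into the statement that no stable — equivalently, no semi-stable — subsheaf of $\Lambda''$ of slope $\mu$ lifts to $\Lambda$, since such a lift would be a subsheaf of $\Lambda$ of $(\alpha - \epsilon)$-slope $\mu$; and in the recognition half the critical case becomes $\Sigma$ of order zero, a would-be destabilizing subsheaf of slope $\mu$, excluded by the same lifting/splitting dictionary.

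I expect the main obstacle to be exactly this dictionary — that the pushout (resp.\ pullback) of the extension class vanishes along a given morphism if and only if the corresponding sub-object lifts into $\Lambda$ — together with its consequences. One has to match a splitting of the pushed-out/pulled-back sequence with a genuine destabilizing subsystem of $\Lambda$; reduce the hypothesis, which is stated for stable $\Delta''$, so as to cover all semi-stable sub-objects of slope $\mu$, by passing to a stable sub- or quotient-object; and check that the non-critical subsystems are uniformly controlled once $\epsilon$ lies below every relevant slope gap. The slope-deficit bookkeeping in the subcase where $\Ker \pi$ (resp.\ the relevant subsheaf) does not have slope exactly $\mu$ is routine, but has to be carried out keeping the denominators bounded by $r$ in view.
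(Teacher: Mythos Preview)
Your proposal is correct and follows essentially the same strategy as the paper: extract the short exact sequence from a Jordan--H\"older filtration (forcing the order-one factor to the top in (i) and the bottom in (ii) by a slope computation), then run a three-case analysis on proper subsystems for the converse, with the critical case handled via the pushout/pullback splitting dictionary and a reduction from semi-stable to stable $\Delta''$. The only cosmetic difference is that you detect destabilization via subsystems where the paper prefers quotients (e.g.\ the paper shows $\Lambda/\Lambda_{m-1}$ is a destabilizing quotient when $i\neq m$, and that a splitting of the pushout yields a surjection $\Lambda\twoheadrightarrow\Delta''$), but these are interchangeable here.
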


\begin{proof}
Assume that $\langle \Lambda \rangle \in \fiberR(\langle \Lambda' \rangle, \langle \Theta \rangle)$.
The Jordan-H\"older filtration
\[
\{ 0 \} = \Lambda_0^{} \subset \Lambda_1^{} \subset \dots \subset \Lambda_m^{} = \Lambda
\]
of $\Lambda$ with $\alpha$-stable terms has length $m \ge 2$.
For a unique index $i$, $\Lambda_i^{} / \Lambda_{i - 1}^{} \simeq \Lambda'$.
Moreover,
\[
\Bigl< \bigoplus_{1 \le j \le m, \, j \neq i} \Lambda_j^{} / \Lambda_{j - 1}^{} \Bigr> = \langle \Theta \rangle.
\]
If $i \neq m$, then we would obtain the inequality
\[
\p_{\alpha + \epsilon}^{}(\Lambda / \Lambda_{m - 1}^{})
= \p_{\alpha}^{}(\Lambda / \Lambda_{m - 1}^{})
= \p_{\alpha}^{}(\Lambda) < \p_{\alpha + \epsilon}^{}(\Lambda).
\]
This would violate the $(\alpha + \epsilon)$-stability of $\Lambda$.
It follows that $i = m$, so we obtain an extension
\[
0 \lra \Lambda'' \lra \Lambda \lra \Lambda' \lra 0
\]
in which $\Lambda'' = \Lambda_{m - 1}^{}$ is a semi-stable sheaf such that $\langle \Lambda'' \rangle = \langle \Theta \rangle$.
Consider a stable sheaf $\Delta''$ of the same slope as $\Lambda''$ and a surjective morphism $\delta'' \colon \Lambda'' \to \Delta''$.
Write $\Delta = \delta_\Rt^{}(\Lambda)$.
We have the commutative diagram
\[
\xymatrix
{
0 \ar[r] & \Lambda'' \ar[r]^-{\lambda} \ar[d]^-{\delta''} & \Lambda \ar[r] \ar[d]^-{\delta} & \Lambda' \ar[r] \ar@{=}[d] & 0 \\
0 \ar[r] & \Delta'' \ar[r] & \Delta \ar[r] & \Lambda' \ar[r] & 0
}.
\]
Assume that $\Delta$ were a split extension and denote by $\sigma \colon \Delta \to \Delta''$ the splitting morphism.
Notice that $\sigma \circ \delta$ is surjective, because $\sigma \circ \delta \circ \lambda = \delta''$ is surjective.
Thus, $\Delta''$ is a quotient of $\Lambda$ of slope
\[
\p_{\alpha + \epsilon}^{}(\Delta'') = \p(\Delta'') = \p(\Lambda'') = \p_{\alpha}^{}(\Lambda) < \p_{\alpha + \epsilon}^{}(\Lambda).
\]
This violates the $(\alpha + \epsilon)$-stability of $\Lambda$.
We deduce that $\delta_\Rt^{}(\Lambda) \neq \{ 0 \}$.
Conversely, we assume that $\Lambda \in \Ext^1(\Lambda', \Lambda'')^\ses$
for some semi-stable sheaf $\Lambda''$ such that $\langle \Lambda'' \rangle = \langle \Theta \rangle$.
Our goal is to prove that $\Lambda$ is $(\alpha + \epsilon)$-stable.
Consider a proper coherent subsystem $\Sigma = (\Gamma_{\Sigma}^{}, \F_{\Sigma}^{}) \subset \Lambda$.
Let $\Sigma'$ and $\Sigma''$ be the image, respectively, the kernel of the morphism $\Sigma \to \Lambda'$.
We have the commutative diagram
\[
\xymatrix
{
0 \ar[r] & \Sigma'' \ar[r] \ar[d] & \Sigma \ar[r] \ar[d] & \Sigma' \ar[r] \ar[d] & 0 \\
0 \ar[r] & \Lambda'' \ar[r] & \Lambda \ar[r] & \Lambda' \ar[r] & 0
}.
\]
Assume, first, that $\Sigma' = \{ 0 \}$.
Using the semi-stability of $\Lambda''$, we obtain the relations
\[
\p_{\alpha + \epsilon}^{}(\Sigma) = \p(\Sigma'') \le \p(\Lambda'') = \p_{\alpha}^{}(\Lambda) < \p_{\alpha + \epsilon}^{}(\Lambda).
\]
Assume, secondly, that $\Sigma' \neq \{ 0 \}$ and $\Sigma' \neq \Lambda'$.
Since $\Lambda'$ is $\alpha$-stable, we have the inequality
\[
\p_{\alpha}^{}(\Sigma') < \p_{\alpha}^{}(\Lambda') = \p_{\alpha}^{}(\Lambda), \quad \text{that is}, \quad
\alpha \dim_\CC^{} \Gamma_{\Sigma}^{} + \euler(\Sigma') < \mult(\Sigma') \p_{\alpha}^{}(\Lambda).
\]
We can find $\epsilon \in (0, \infty)$ sufficiently small and independent of $\Sigma$ such that
\[
(\alpha + \epsilon) \dim_\CC^{} \Gamma_{\Sigma}^{} + \euler(\Sigma') < \mult(\Sigma') \p_{\alpha}^{}(\Lambda).
\]
From the semi-stability of $\Lambda''$ we obtain the inequality $\euler(\Sigma'') \le \mult(\Sigma'') \p_{\alpha}^{}(\Lambda)$.
Adding the last two inequalities yields the inequalities
\[
(\alpha + \epsilon) \dim_\CC^{} \Gamma_{\Sigma}^{} + \euler(\Sigma)
< \mult(\Sigma) \p_{\alpha}^{}(\Lambda) < \mult(\Sigma) \p_{\alpha + \epsilon}^{}(\Lambda).
\]
Thus, $\p_{\alpha + \epsilon}^{}(\Sigma) < \p_{\alpha + \epsilon}^{}(\Lambda)$.
It remains to examine the case when $\Sigma' = \Lambda'$.
Notice that $\Sigma'' \neq \{ 0 \}$ because, by hypothesis,
$\Lambda$ is a non-split extension of $\Lambda'$ by $\Lambda''$.
We claim that $\p(\Sigma'') < \p(\Lambda'')$.
Assume, on the contrary, that $\p(\Sigma'') = \p(\Lambda'')$.
From the snake lemma applied to the above diagram we obtain the commutative diagram
\[
\xymatrix
{
0 \ar[r] & \Lambda'' \ar[r] \ar@{->>}[d] & \Lambda \ar[r] \ar@{->>}[d] & \Lambda' \ar[r] & 0 \\
& \Lambda'' / \Sigma'' \ar@{=}[r] & \Lambda'' / \Sigma''
}
\]
in which $\Lambda'' / \Sigma''$ is a semi-stable sheaf of slope $\p(\Lambda'')$.
Choose a stable quotient sheaf $\Delta''$ of $\Lambda'' / \Sigma''$ of slope $\p(\Delta'') = \p(\Lambda'')$.
From the commutative diagram
\[
\xymatrix
{
0 \ar[r] & \Lambda'' \ar[r]^-{\lambda} \ar@{->>}[d]^-{\delta''} & \Lambda \ar[r] \ar@{->>}[d]^-{\delta} & \Lambda' \ar[r] & 0 \\
& \Delta'' \ar@{=}[r] & \Delta''
}
\]
we obtain the relations
\[
\delta_\Rt^{}(\Lambda) = \Ext^1(\Lambda', \delta'')(\Lambda) = \Ext^1(\Lambda', \delta)(\Ext^1(\Lambda', \lambda)(\Lambda))
= \Ext^1(\Lambda', \delta)(\{ 0 \}) = \{ 0 \}.
\]
This contradicts our choice of $\Lambda$ in $\Ext^1(\Lambda', \Lambda'')^\ses$.
We have proved the claim.
From the inequality $\p(\Sigma'') < \p_{\alpha}^{}(\Lambda)$ we obtain the inequality
$\euler(\Sigma'') < \mult(\Sigma'') \p_{\alpha}^{}(\Lambda)$.
We can find $\epsilon \in (0, \infty)$ sufficiently small and independent of $\Sigma$
such that $\epsilon + \euler(\Sigma'') < \mult(\Sigma'') \p_{\alpha}^{}(\Lambda)$.
This inequality and the equation $\alpha + \euler(\Sigma') = \mult(\Sigma') \p_{\alpha}^{}(\Lambda)$ lead to the inequalities
\[
(\alpha + \epsilon) \dim_\CC^{} \Gamma_{\Sigma}^{} + \euler(\Sigma) < \mult(\Sigma) \p_{\alpha}^{}(\Lambda)
< \mult(\Sigma) \p_{\alpha + \epsilon}^{}(\Lambda).
\]
Thus, $\p_{\alpha + \epsilon}^{}(\Sigma) < \p_{\alpha + \epsilon}^{}(\Lambda)$.
We conclude that $\Lambda$ is $(\alpha + \epsilon)$-stable, which proves (i).
Part (ii) can be proved analogously, by dualising the above arguments.
\end{proof}

\begin{lemma}
\label{HN}
We assume that $\Lambda = (\Gamma, \F)$ gives a stable point in $\MX$.
We make the following claims:
\begin{enumerate}
\item[\emph{(i)}]
There exists a filtration $\{ 0 \} = \Lambda_0^{} \subset \dots \subset \Lambda_k^{} = \Lambda$ of $\Lambda$ by coherent subsystems,
with $\Lambda_i^{} = (\{ 0 \}, \F_i^{})$ for $0 \le i \le k - 1$,
such that $\F_i^{} / \F_{i - 1}^{}$ is semi-stable for $1 \le i \le k - 1$, $(\Gamma, \F / \F_{k - 1}^{})$ is $0^+$-stable, and
\[
\p_{\alpha}^{}(\Lambda) > \p(\F_1^{} / \F_0^{}) > \p(\F_2^{} / \F_1^{}) > \dots > \p(\F / \F_{k - 1}^{}).
\]
This is the Harder-Narasimhan filtration of $\Lambda$ with $0^+$-semi-stable terms.
\item[\emph{(ii)}]
There exists a filtration $\{ 0 \} = \Delta_0^{} \subset \dots \subset \Delta_l^{} = \Lambda$ of $\Lambda$ by coherent subsystems,
with $\Delta_i^{} = (\Gamma, \G_i^{})$ for $1 \le i \le l$,
such that $\Delta_1^{}$ is $\infty$-stable, $\G_i^{} / \G_{i - 1}^{}$ is semi-stable for $2 \le i \le l$, and
\[
\p(\G_2^{} / \G_1^{}) > \dots > \p(\G_l^{} / \G_{l - 1}^{}) > \p_{\alpha}^{}(\Lambda).
\]
This is the Harder-Narasimhan filtration of $\Lambda$ with $\infty$-semi-stable terms.
\end{enumerate}
\end{lemma}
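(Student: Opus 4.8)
The plan is to build both filtrations by hand out of Harder--Narasimhan filtrations of \emph{sheaves}, and to extract the additional properties from the $\alpha$-stability of $\Lambda$. I would first record a soft fact: for an order-one coherent system $(\Gamma_0,\F_0)$ with $\F_0$ pure, $0^+$-semi-stability coincides with $0^+$-stability and $\infty$-semi-stability coincides with $\infty$-stability. Indeed, if a proper subsystem $\Sigma=(\Gamma_\Sigma,\F_\Sigma)$ had $\p_{\alpha}^{}(\Sigma)=\p_{\alpha}^{}(\Gamma_0,\F_0)$ for all $\alpha$ in a half-line near $0$ (resp.\ near $\infty$), then, both sides being affine in $\alpha$, one gets $\euler(\F_\Sigma)/\mult(\F_\Sigma)=\euler(\F_0)/\mult(\F_0)$ and $\dim_\CC^{}\Gamma_\Sigma/\mult(\F_\Sigma)=1/\mult(\F_0)$, which forces (using that $\F_0$ has no zero-dimensional subsheaf) $\Gamma_\Sigma=\Gamma_0$, $\mult(\F_\Sigma)=\mult(\F_0)$, hence $\F_\Sigma=\F_0$ --- a contradiction. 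I would also use that, fixing a generator $s$ of $\Gamma$, a subsheaf $\F'\subseteq\F$ carries the section if and only if $\Img(s)\subseteq\F'$; so $(\Gamma,\F')$ is $\infty$-semi-stable exactly when $\F'/\Img(s)$ is zero-dimensional.

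\emph{Part (i).} Let $\{0\}=\F_0\subset\dots\subset\F_n=\F$ be the Harder--Narasimhan filtration of $\F$, with $\mu_i=\p(\F_i/\F_{i-1})$ and $\mu_1>\dots>\mu_n$, and set $k=n$, $\Lambda_i=(\{0\},\F_i)$ for $0\le i\le n-1$, $\Lambda_n=\Lambda$. First one checks $s\notin\H^0(\F_{n-1})$: otherwise $(\Gamma,\F_{n-1})$ is a proper subsystem with $\mult(\F_{n-1})<\mult(\F)$ and $\p(\F_{n-1})>\p(\F)$, whence $\p_{\alpha}^{}(\Gamma,\F_{n-1})=\alpha/\mult(\F_{n-1})+\p(\F_{n-1})>\alpha/\mult(\F)+\p(\F)=\p_{\alpha}^{}(\Lambda)$, contradicting stability; thus $\Lambda_n/\Lambda_{n-1}=(\Gamma,\F/\F_{n-1})$. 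The quotients $(\{0\},\F_i/\F_{i-1})$, $1\le i\le n-1$, are semi-stable sheaves, and $\mu_1<\p_{\alpha}^{}(\Lambda)$ (stability applied to $(\{0\},\F_1)\subsetneq\Lambda$), so the asserted chain of slopes holds and the $0^+$-slopes of the successive quotients strictly decrease. The crux is that $(\Gamma,\F/\F_{n-1})$ is $0^+$-stable, equivalently $0^+$-semi-stable: since $\F/\F_{n-1}$ is semi-stable of slope $\mu_n$, a destabilizing subsystem would have to be $(\Gamma,\bar\F')$ with $\bar\F'\subsetneq\F/\F_{n-1}$ of slope $\mu_n$ and $\mult(\bar\F')<\mult(\F/\F_{n-1})$ carrying the image of $s$; its preimage $\F'\subseteq\F$ then carries $s$, so $(\Gamma,\F')\subsetneq\Lambda$, and a short computation shows $\p_{\alpha}^{}(\Gamma,\F')-\p_{\alpha}^{}(\Lambda)$ has the sign of $\bigl(\mult(\F/\F_{n-1})-\mult(\bar\F')\bigr)\bigl(\alpha+\euler(\F_{n-1})-\mu_n\mult(\F_{n-1})\bigr)$, which is positive because $\euler(\F_{n-1})\ge\mu_n\mult(\F_{n-1})$ --- contradicting the stability of $\Lambda$. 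By the uniqueness of Harder--Narasimhan filtrations, this is the one relative to $0^+$-stability.

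\emph{Part (ii).} Let $\G_1\subseteq\F$ be the preimage under $\F\to\F/\Img(s)$ of the largest zero-dimensional subsheaf; then $\Img(s)\subseteq\G_1$, the sheaf $\G_1/\Img(s)$ is zero-dimensional, $\F/\G_1$ is pure, and $(\Gamma,\G_1)$ is $\infty$-semi-stable, hence $\infty$-stable. If $\G_1=\F$ we are done with $l=1$. Otherwise let $\{0\}=\bar\G_0\subset\dots\subset\bar\G_{l-1}=\F/\G_1$ be the Harder--Narasimhan filtration of $\F/\G_1$, with slopes $\nu_1>\dots>\nu_{l-1}$, let $\G_j\subseteq\F$ be the preimage of $\bar\G_{j-1}$, and set $\Delta_j=(\Gamma,\G_j)$. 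Then $\Delta_j/\Delta_{j-1}\cong(\{0\},\bar\G_{j-1}/\bar\G_{j-2})$ is semi-stable of slope $\nu_{j-1}$ for $2\le j\le l$. Finally, $(\Gamma,\G_{l-1})\subsetneq\Lambda$, so $\p_{\alpha}^{}(\Gamma,\G_{l-1})<\p_{\alpha}^{}(\Lambda)$; cross-multiplying, this is equivalent to $\p(\F/\G_{l-1})>\p_{\alpha}^{}(\Lambda)$, and since $\F/\G_{l-1}\cong\bar\G_{l-1}/\bar\G_{l-2}$ has slope $\nu_{l-1}$ we get $\nu_{l-1}>\p_{\alpha}^{}(\Lambda)$, hence $\nu_j>\p_{\alpha}^{}(\Lambda)$ for all $j$. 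Since the $\infty$-slope of the section-carrying term $\Delta_1$ dominates those of the subsequent sheaf quotients, uniqueness again identifies this with the Harder--Narasimhan filtration relative to $\infty$-stability.

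The main obstacle, I expect, is arranging the two $\alpha$-slope computations --- and, in (i), the argument that the pulled-back system $(\Gamma,\F')$ genuinely destabilizes $\Lambda$ --- so that the signs come out right; the rest is bookkeeping with Harder--Narasimhan filtrations and with the lexicographic comparison of $0^+$- and $\infty$-slopes. I would also double-check that the degenerate cases ($\F$ already semi-stable in (i), $\G_1=\F$ in (ii)) go through.
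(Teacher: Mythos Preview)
Your proof is correct and takes a genuinely different route from the paper's. The paper argues by induction on $\mult(\Lambda)$: if $\Lambda$ is not already $0^+$-stable, it finds the largest singular value $\beta<\alpha$ at which $\Lambda$ becomes properly $\beta$-semi-stable, invokes Proposition~\ref{fibers}(i) to write $\Lambda$ as an extension $0\to\Lambda''\to\Lambda\to\Lambda'\to0$ with $\Lambda''$ a semi-stable sheaf and $\Lambda'$ a $\beta$-stable system of smaller multiplicity, applies the induction hypothesis to $\Lambda'$, and pulls back; part~(ii) is the dual argument using Proposition~\ref{fibers}(ii). You instead build the filtration directly from the ordinary Harder--Narasimhan filtration of the sheaf $\F$ (for (i)) or of $\F/\G_1$ (for (ii)), and use the $\alpha$-stability of $\Lambda$ only to verify the slope inequalities and to rule out the one case in which $(\Gamma,\F/\F_{n-1})$ could fail to be $0^+$-stable. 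Your approach is more elementary and self-contained --- it does not depend on Proposition~\ref{fibers} at all. The paper's inductive approach, by contrast, makes the link between the filtration and the wall-crossing extensions explicit: the very first step of its construction shows that $\Lambda_1=\Lambda''$, the sheaf appearing in the extension at the nearest wall, and this identification is exactly what is quoted in the proof of Theorem~\ref{orbits}. With your construction, that identification would have to be supplied separately (e.g.\ via the uniqueness of the HN filtration, together with the observation --- which the paper records right after the lemma --- that the filtration of sheaves $\F_0\subset\dots\subset\F_{k-1}\subset\F$ obtained either way is the usual HN filtration of $\F$).
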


\begin{proof}
To prove (i) we perform induction on $\mult(\Lambda)$.
Assume that $\mult(\Lambda) = 1$.
There are no singular values relative to the polynomial $x + t$, hence $\Lambda$ is $0^+$-stable,
and hence the trivial filtration of length $k = 1$ has the required properties.
Assume that $\mult(\Lambda) > 1$.
We may also assume that $\Lambda$ is not $0^+$-stable, otherwise we take the trivial filtration.
There exists a singular value $\beta$ relative to the polynomial $r x + t$,
such that $\beta < \alpha$ and such that $\Lambda$ is properly $\beta$-semi-stable.
We take $\beta$ to be maximal possible, which ensures that $\Lambda$ be $(\beta + \epsilon)$-stable.
According to Proposition~\ref{fibers}(i), we have an extension
\[
0 \lra \Lambda'' \lra \Lambda \lra \Lambda' \lra 0,
\]
such that $\Lambda''$ is a semi-stable sheaf, $\Lambda'$ is $\beta$-stable
and $\p_\beta^{}(\Lambda) = \p(\Lambda'') = \p_\beta^{}(\Lambda')$.
By the induction hypothesis, there exists a filtration
\[
\{ 0 \} = \Lambda_1' \subset \dots \subset \Lambda_k' = \Lambda' = (\Gamma', \F'),
\]
with $\Lambda_i' = (\{ 0 \}, \F_i')$ for $1 \le i \le k - 1$,
such that $\F_i' / \F_{i - 1}'$ is semi-stable for $2 \le i \le k - 1$, $(\Gamma', \F' / \F_{k - 1}')$ is $0^+$-stable, and
\[
\p_\beta^{}(\Lambda') > \p(\F_2' / \F_1') > \dots > \p(\F' / \F_{k - 1}').
\]
For $1 \le i \le k - 1$, let $\Lambda_i^{} = (\{ 0 \}, \F_i^{})$ be the preimage of $\Lambda_i'$ in $\Lambda$.
Put $\Lambda_0^{} = \{ 0 \}$, $\Lambda_k^{} = \Lambda$.
The filtration $\{ \Lambda_i^{} \}_{0 \le i \le k}^{}$ satisfies the requirements of the lemma because
\[
\p_{\alpha}^{}(\Lambda) > \p_\beta^{}(\Lambda)
= \p(\Lambda'') = \p(\F_1^{})
= \p_\beta^{}(\Lambda')
> \p(\F_2^{} / \F_1^{}) > \dots > \p(\F / \F_{k - 1}^{}).
\]
This concludes the proof of (i).
Part (ii) can be proved analogously, by dualising the above arguments.
If $\Lambda$ is not $\infty$-stable,
then we take $\beta > \alpha$ to be the smallest singular value such that $\Lambda$ is properly $\beta$-semi-stable
and we invoke Proposition~\ref{fibers}(ii).
\end{proof}

\noindent
The filtration $\{ 0 \} \subset \F_1^{} \subset \dots \subset \F_{k - 1}^{} \subset \F$ from Lemma~\ref{HN}(i)
is the usual Harder-Narasimham filtration of $\F$.

Assume that $\alpha$ is a singular value relative to the polynomial $r x + t$.
Assume that $\MX^\pss$ contains the space $\MXprime \times \MXsecond$.
Consider closed points $\langle \Lambda' \rangle \in \MXprime$ and $\langle \Lambda'' \rangle \in \MXsecond$.
Denote $E = \Spec(\Sym(\Ext^1(\Lambda', \Lambda'')^*))$.
Applying \cite[Corollary 4.3.3]{tommasini} or \cite[Corollary 3.4]{lange}
to the morphism $X \to \{ \pt \}$ we obtain a universal extension
\[
0 \lra \Lambda_E'' \lra \LLambda \lra \Lambda_E' \lra 0,
\]
of families of coherent systems on $E \times X/ E$,
whose restriction to every closed point $\Lambda \in \Ext^1(\Lambda', \Lambda'')$
lies in the same extension class as $\Lambda$.
The subset
\[
\Ext^1(\Lambda', \Lambda'')^\ses =
\{ \Lambda \in \Ext^1(\Lambda', \Lambda'') \mid \LLambda |_{\{ \Lambda \} \times X} \ \text{is $(\alpha + \epsilon)$-stable} \}
\]
from Proposition~\ref{fibers}(i) is open because $\LLambda$ is flat over $E$.
We denote by $E^\ses \subset E$ the corresponding open subscheme.
Thus, $\LLambda |_{E^\ses}$ is a flat family of $(\alpha + \epsilon)$-stable coherent systems on $X$
that have order $1$ and Hilbert polynomial $r x + t$.
By the universal property of $\M_X^{\alpha + \epsilon}(r, t)$,
$\LLambda |_{E^\ses}$ gives rise to a morphism of schemes $E^\ses \to \M_X^{\alpha + \epsilon}(r, t)$.
The induced morphism of reduced schemes has image contained in $\fiberR(\langle \Lambda' \rangle, \langle \Lambda'' \rangle)$,
so it restricts to a morphism of varieties
\[
\eta_\Rt^{} \colon \Ext^1(\Lambda', \Lambda'')^\ses \lra \fiberR(\langle \Lambda' \rangle, \langle \Lambda'' \rangle)
\quad \text{given by} \quad \eta_\Rt^{}(\Lambda) = \langle \Lambda \rangle.
\]
Analogously, employing the universal family of extensions of $\Lambda''$ by $\Lambda'$ and Proposition~\ref{fibers}(ii),
we deduce that the subset $\Ext^1(\Lambda'', \Lambda')^\ses \subset \Ext^1(\Lambda'', \Lambda')$
of $(\alpha - \epsilon)$-stable extensions is open
and we obtain the morphism of varieties
\[
\eta_\Lt^{} \colon \Ext^1(\Lambda'', \Lambda')^\ses \lra \fiberL(\langle \Lambda' \rangle, \langle \Lambda'' \rangle)
\quad \text{given by} \quad \eta_\Lt^{}(\Lambda) = \langle \Lambda \rangle.
\]
The group $\Aut(\Lambda') \times \Aut(\Lambda'')$ acts by conjugation on $\Ext^1(\Lambda', \Lambda'')$ and on $\Ext^1(\Lambda'', \Lambda')$.
The open subsets of $(\alpha + \epsilon)$-stable extensions, respectively, of $(\alpha - \epsilon)$-stable extensions are invariant.
The morphisms $\eta_\Rt^{}$ and $\eta_\Lt^{}$ are $\Aut(\Lambda') \times \Aut(\Lambda'')$-equivariant.
Since $\Lambda'$ is $\alpha$-stable, it follows that $\Aut(\Lambda') \simeq \CC^*$.
The subgroup of homotheties $\{ (a \id_{\Lambda'}^{}, a \id_{\Lambda''}^{}) \mid a \in \CC^* \}$ acts trivially,
hence the action on either space factors through an action of
\[
(\Aut(\Lambda') \times \Aut(\Lambda'')) / \{ (a \id_{\Lambda'}^{}, a \id_{\Lambda''}^{}) \mid a \in \CC^* \}
\simeq(\CC^* \times \Aut(\Lambda'')) / \CC^*
\simeq \Aut(\Lambda'').
\]
In the next proposition we fix $\langle \Theta \rangle \in \MXsecond$ and we denote by $\If(\Theta)$
the set of isomorphism classes of semi-stable sheaves $\Lambda''$ satisfying $\langle \Lambda'' \rangle = \langle \Theta \rangle$.
It is known that $\If(\Theta)$ is finite.

\begin{theorem}
\label{orbits}
We adopt the above assumptions and notations.
We make the following claims:
\begin{enumerate}
\item[\emph{(i)}]
The fibers of $\eta_\Rt^{}$ are precisely the $\Aut(\Lambda'')$-orbits.
We have the finite decomposition
\[
\fiberR (\langle \Lambda' \rangle, \langle \Theta \rangle)
= \bigsqcup_{\Lambda'' \in \If(\Theta)} \eta_\Rt^{} (\Ext^1(\Lambda', \Lambda'')^\ses).
\]
\item[\emph{(ii)}]
The fibers of $\eta_\Lt^{}$ are precisely the $\Aut(\Lambda'')$-orbits.
We have the finite decomposition
\[
\fiberL (\langle \Lambda' \rangle, \langle \Theta \rangle)
= \bigsqcup_{\Lambda'' \in \If(\Theta)} \eta_\Lt^{} (\Ext^1(\Lambda'', \Lambda')^\ses).
\]
\end{enumerate}
\end{theorem}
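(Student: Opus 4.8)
The plan is to deduce the theorem from Proposition~\ref{fibers} together with one elementary vanishing of $\Hom$-groups. I describe the argument for (i); part (ii) follows by dualising every step.

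The first step is formal. By Proposition~\ref{fibers}(i), a closed point $\langle\Lambda\rangle$ lies in $\fiberR(\langle\Lambda'\rangle,\langle\Theta\rangle)$ precisely when $\Lambda$ is isomorphic to the middle term of some extension belonging to $\Ext^1(\Lambda',\Lambda'')^\ses$ for some semi-stable sheaf $\Lambda''$ with $\langle\Lambda''\rangle=\langle\Theta\rangle$; equivalently, precisely when $\langle\Lambda\rangle$ lies in $\eta_\Rt^{}(\Ext^1(\Lambda',\Lambda'')^\ses)$ for some $\Lambda''\in\If(\Theta)$. Since $\If(\Theta)$ is finite, this already yields the finite covering $\fiberR(\langle\Lambda'\rangle,\langle\Theta\rangle)=\bigcup_{\Lambda''\in\If(\Theta)}\eta_\Rt^{}(\Ext^1(\Lambda',\Lambda'')^\ses)$, and it remains to prove that this union is disjoint and that the fibres of each $\eta_\Rt^{}$ are exactly the $\Aut(\Lambda'')$-orbits.

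The technical core --- the step I expect to carry the whole argument --- is the following. Let $\Lambda_1,\Lambda_2$ sit in extensions $0\to\Lambda_1''\to\Lambda_1\to\Lambda'\to0$ and $0\to\Lambda_2''\to\Lambda_2\to\Lambda'\to0$, with $\Lambda_1'',\Lambda_2''$ semi-stable sheaves of slope $\p_\alpha^{}(\Lambda')$ (the common $\alpha$-slope at the wall), and let $f\colon\Lambda_1\to\Lambda_2$ be any isomorphism of coherent systems; then $f$ maps $\Lambda_1''$ onto $\Lambda_2''$. To see this, consider the composite $\Lambda_1''\to\Lambda_1\xrightarrow{f}\Lambda_2\to\Lambda'$, a morphism of coherent systems whose source has order $0$. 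Its underlying morphism of sheaves $\Lambda_1''\to\F'$ has some image $\I\subseteq\F'$, and $(\{0\},\I)$ is a proper coherent subsystem of $\Lambda'$ --- proper because $\Lambda'$ has order $1$. If $\I\neq0$, then, being a quotient of the semi-stable sheaf $\Lambda_1''$, it satisfies $\p(\I)\ge\p(\Lambda_1'')=\p_\alpha^{}(\Lambda')$, so $\p_\alpha^{}\big((\{0\},\I)\big)=\p(\I)\ge\p_\alpha^{}(\Lambda')$, contradicting the $\alpha$-stability of $\Lambda'$. Hence $\I=0$, the composite vanishes, and $f(\Lambda_1'')\subseteq\Ker(\Lambda_2\to\Lambda')=\Lambda_2''$; applying the same argument to $f^{-1}$ forces equality. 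This is exactly the statement that there is no nonzero morphism from a semi-stable sheaf to an $\alpha$-stable coherent system of order $1$ when the two share a slope.

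Granting this, the remaining points are routine. Disjointness: if $\langle\Lambda_1\rangle=\langle\Lambda_2\rangle$ with $\Lambda_j\in\Ext^1(\Lambda',\Lambda_j'')^\ses$, any isomorphism restricts to an isomorphism $\Lambda_1''\simeq\Lambda_2''$, so distinct classes of $\If(\Theta)$ contribute disjoint images. Fibres equal orbits: fix $\Lambda''\in\If(\Theta)$ and $\Lambda_1,\Lambda_2\in\Ext^1(\Lambda',\Lambda'')^\ses$. If $\eta_\Rt^{}(\Lambda_1)=\eta_\Rt^{}(\Lambda_2)$, an isomorphism $f$ preserves the sub-object $\Lambda''$, hence induces $f''\in\Aut(\Lambda'')$ and $f'\in\Aut(\Lambda')$ that form a commutative ladder of the two extensions; thus $\Lambda_2$ lies in the orbit of $\Lambda_1$ under the conjugation action of $\Aut(\Lambda')\times\Aut(\Lambda'')$, which factors through $\Aut(\Lambda'')$ since the homotheties act trivially, as recorded before the theorem. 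Conversely, if $\Lambda_2$ is in the $\Aut(\Lambda'')$-orbit of $\Lambda_1$, the conjugating pair induces an isomorphism of extensions, so $\Lambda_1\simeq\Lambda_2$ and $\eta_\Rt^{}(\Lambda_1)=\eta_\Rt^{}(\Lambda_2)$. Hence the fibres of $\eta_\Rt^{}$ are precisely the $\Aut(\Lambda'')$-orbits, which proves (i). Part (ii) is proved by the same scheme applied to the extensions $0\to\Lambda'\to\Lambda\to\Lambda''\to0$ and to $\eta_\Lt^{}$, via Proposition~\ref{fibers}(ii), using the dual vanishing: a morphism of coherent systems $\Lambda'\to\Lambda''$ has image a coherent subsystem $(\{0\},\I)$ that is a proper quotient of the $\alpha$-stable $\Lambda'$, so if $\I\neq0$ then $\p(\I)=\p_\alpha^{}\big((\{0\},\I)\big)>\p_\alpha^{}(\Lambda')$, while $\p(\I)\le\p(\Lambda'')=\p_\alpha^{}(\Lambda')$ because $\I$ is a subsheaf of the semi-stable sheaf $\Lambda''$ --- a contradiction; hence every isomorphism of two such extensions preserves the sub-object $\Lambda'$ and induces an isomorphism of the quotients $\Lambda_1''\simeq\Lambda_2''$, and one concludes as above. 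The only genuine obstacle is the $\Hom$-vanishing of the third paragraph; with it in hand the rest is bookkeeping with extensions and the already-established properties of $\eta_\Rt^{}$, $\eta_\Lt^{}$ and the finiteness of $\If(\Theta)$.
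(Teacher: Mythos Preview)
Your argument is correct and complete. The key step --- showing that any isomorphism $f\colon\Lambda_1\to\Lambda_2$ carries the subsheaf $\Lambda_1''$ onto $\Lambda_2''$ --- is established via the vanishing $\Hom(\Lambda'',\Lambda')=0$, which you prove directly from the slope inequalities and the orders. This is essentially the content of Proposition~\ref{hom_vanishes}(ii), which the paper states and proves a bit later.

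The paper takes a different route to this same key step. Rather than a $\Hom$-vanishing, it invokes Lemma~\ref{HN}(i): the Harder--Narasimhan filtration of $\Lambda$ with $0^{+}$-semi-stable terms has $\Lambda''$ as its first step (this is read off from the inductive \emph{proof} of that lemma, not merely its statement), and since any isomorphism preserves Harder--Narasimhan filtrations, $f$ must carry $\Lambda''$ to $\Lambda''$. Your approach is more elementary and self-contained for this theorem --- it does not require developing the special HN filtrations of Lemma~\ref{HN} --- while the paper's approach gives an intrinsic characterisation of $\Lambda''\subset\Lambda$ as a canonical subsystem, which is conceptually satisfying and reusable. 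Note, however, that Lemma~\ref{HN} is still needed elsewhere in the paper (Proposition~\ref{ext_delPezzo}(ii)), so your simplification does not eliminate it from the overall argument.
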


\begin{proof}
Assume that, for $\Lambda$ and $\Delta$ in $\Ext^1(\Lambda', \Lambda'')^\ses$,
we have $\eta_\Rt^{}(\Lambda) = \eta_\Rt^{}(\Delta)$.
Two $(\alpha + \epsilon)$-stable coherent systems are S-equivalent if and only if they are isomorphic,
hence there is an isomorphism $\lambda \colon \Lambda \to \Delta$.
We seek $\lambda' \in \Aut(\Lambda')$ and $\lambda'' \in \Aut(\Lambda'')$ making the diagram
\[
\xymatrix
{
0 \ar[r] & \Lambda'' \ar[r] \ar[d]^-{\lambda''} & \Lambda \ar[r] \ar[d]^-{\lambda} & \Lambda' \ar[r] \ar[d]^-{\lambda'} & 0 \\
0 \ar[r] & \Lambda'' \ar[r] & \Delta \ar[r] & \Lambda' \ar[r] & 0
}
\]
commute.
Recall, from Lemma~\ref{HN}(i), the Harder-Narasimhan filtration $\{ \Lambda_i^{} \}_{0 \le i \le k}^{}$ of $\Lambda$
with $0^+$-semi-stable terms.
The proof of Lemma~\ref{HN}(i) shows that $\Lambda_1^{} = \Lambda''$ and that $\{ \Lambda_i^{} \}_{2 \le i \le k}^{}$
is the preimage in $\Lambda$ of the Harder-Narasimhan filtration of $\Lambda'$ with $0^+$-semi-stable terms.
The Harder-Narasimhan filtration with $0^+$-semi-stable terms of $\Delta$ can be obtained in a similar fashion.
Any isomorphism $\Lambda \to \Delta$ preserves the Harder-Narasimhan filtrations,
so $\lambda$ induces $\lambda''$ making the first square of the above diagram commute.
We take $\lambda'$ to be the induced isomorphism.
We deduce that $\Lambda$ and $\Delta$ lie in the same $\Aut(\Lambda') \times \Aut(\Lambda'')$-orbit.
By virtue of Proposition~\ref{fibers}(i),
\[
\fiberR (\langle \Lambda' \rangle, \langle \Theta \rangle)
= \bigcup_{\Lambda'' \in \If(\Theta)} \eta_\Rt^{} (\Ext^1(\Lambda', \Lambda'')^\ses).
\]
It remains to prove that the above union is disjoint.
Assume that $\eta_\Rt^{} (\Ext^1(\Lambda', \Lambda'')^\ses)$ and $\eta_\Rt^{} (\Ext^1(\Lambda', \Delta'')^\ses)$ have a common point.
This point is represented by isomorphic coherent systems $\Lambda \in \Ext^1(\Lambda', \Lambda'')^\ses$
and $\Delta \in \Ext^1(\Lambda', \Delta'')^\ses$.
Arguing as above, we can show that an isomorphism $\lambda \colon \Lambda \to \Delta$
induces isomorphisms $\lambda'$ and $\lambda''$ such that the diagram
\[
\xymatrix
{
0 \ar[r] & \Lambda'' \ar[r] \ar[d]^-{\lambda''} & \Lambda \ar[r] \ar[d]^-{\lambda} & \Lambda' \ar[r] \ar[d]^-{\lambda'} & 0 \\
0 \ar[r] & \Delta'' \ar[r] & \Delta \ar[r] & \Lambda' \ar[r] & 0
}
\]
becomes commutative.
In particular, $\Lambda''$ and $\Delta''$ lie in the same isomorphism class.
This concludes the proof of (i).
To prove part (ii) we exploit Lemma~\ref{HN}(ii) and of Proposition~\ref{fibers}(ii).
\end{proof}

\begin{proposition}
\label{fibers_stable}
We assume that $\alpha$ is a singular value relative to the polynomial $P(x) = r x + t$.
We assume that $\MX^\pss$ contains the space $\MXprime \times \MXsecond$.
We consider closed and stable points $\langle \Lambda' \rangle \in \MXprime$ and $\langle \Lambda'' \rangle \in \MXsecond^\st$.
\begin{enumerate}
\item[\emph{(i)}]
We claim that there is a bijective morphism of varieties
\[
\theta_\Rt^{} \colon \PP(\Ext^1(\Lambda', \Lambda'')) \lra \fiberR(\langle \Lambda' \rangle, \langle \Lambda'' \rangle)
\quad \text{given by} \quad
\theta_\Rt^{}(\CC \Lambda) = \langle \Lambda \rangle.
\]
\item[\emph{(ii)}]
We claim that there is a bijective morphism of varieties
\[
\theta_\Lt^{} \colon \PP(\Ext^1(\Lambda'', \Lambda')) \lra \fiberL(\langle \Lambda' \rangle, \langle \Lambda'' \rangle)
\quad \text{given by} \quad
\theta_\Lt^{}(\CC \Lambda) = \langle \Lambda \rangle.
\]
\end{enumerate}
\end{proposition}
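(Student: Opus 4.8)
The plan is to deduce the proposition directly from Theorem~\ref{orbits} once one identifies the semi-stable locus in the stable case. I will describe the argument for part~(i); part~(ii) is obtained by the same reasoning, with $\Ext^1(\Lambda', \Lambda'')$, $\eta_\Rt^{}$, Theorem~\ref{orbits}(i) and Proposition~\ref{fibers}(i) replaced throughout by $\Ext^1(\Lambda'', \Lambda')$, $\eta_\Lt^{}$, Theorem~\ref{orbits}(ii) and Proposition~\ref{fibers}(ii).

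First I would pin down $\Ext^1(\Lambda', \Lambda'')^\ses$. Since $\langle \Lambda'' \rangle$ is stable, the only semi-stable sheaf in its S-equivalence class is $\Lambda''$ itself, so $\If(\Lambda'') = \{ \Lambda'' \}$ and Theorem~\ref{orbits}(i) already gives $\fiberR(\langle \Lambda' \rangle, \langle \Lambda'' \rangle) = \eta_\Rt^{}(\Ext^1(\Lambda', \Lambda'')^\ses)$. Next, if $\Delta''$ is a stable sheaf of the same slope as $\Lambda''$ and $\Lambda'' \to \Delta''$ is surjective, then its kernel $K$ is either zero or pure of dimension one; in the latter case $\p(K) < \p(\Lambda'') = \p(\Delta'')$ by stability of $\Lambda''$, which is impossible by additivity of multiplicity and Euler characteristic. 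Hence $\Lambda'' \to \Delta''$ is an isomorphism and $\delta_\Rt^{}$ is an isomorphism, so the condition defining $\Ext^1(\Lambda', \Lambda'')^\ses$ in Proposition~\ref{fibers}(i) reduces to $\Lambda \ne 0$; that is, $\Ext^1(\Lambda', \Lambda'')^\ses = \Ext^1(\Lambda', \Lambda'') \smallsetminus \{ 0 \}$.

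Next I would describe the fibers of $\eta_\Rt^{}$. Because $\Lambda''$ is stable, $\Aut(\Lambda'') \simeq \CC^*$, and the action of $\Aut(\Lambda'')$ on $\Ext^1(\Lambda', \Lambda'')$ described before Theorem~\ref{orbits} is push-forward along scalar automorphisms, i.e.\ the standard scaling action of $\CC^*$ on the vector space $\Ext^1(\Lambda', \Lambda'')$. Its orbits on $\Ext^1(\Lambda', \Lambda'') \smallsetminus \{ 0 \}$ are therefore exactly the punctured lines through the origin, i.e.\ the fibers of the quotient map $q \colon \Ext^1(\Lambda', \Lambda'') \smallsetminus \{ 0 \} \to \PP(\Ext^1(\Lambda', \Lambda''))$. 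By Theorem~\ref{orbits}(i) these coincide with the fibers of $\eta_\Rt^{}$, so $\eta_\Rt^{}$ factors set-theoretically through a bijection $\theta_\Rt^{} \colon \PP(\Ext^1(\Lambda', \Lambda'')) \to \fiberR(\langle \Lambda' \rangle, \langle \Lambda'' \rangle)$, $\CC \Lambda \mapsto \langle \Lambda \rangle$ (surjectivity and injectivity both being immediate from the previous paragraph and the fact that $q$ is surjective).

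Finally I would verify that $\theta_\Rt^{}$ is a morphism of varieties, and this is where the only real work lies. The map $q$ is the complement of the zero section in the total space of $\Osh_{\PP(\Ext^1(\Lambda', \Lambda''))}(-1)$, hence Zariski-locally trivial, so it admits a section over each member of a suitable open cover of $\PP(\Ext^1(\Lambda', \Lambda''))$. Since $\eta_\Rt^{}$ is a morphism of varieties and is constant on the fibers of $q$, composing it with such local sections exhibits $\theta_\Rt^{}$ locally, hence globally, as a morphism; equivalently, $q$ is a geometric, hence categorical, quotient by the reductive group $\CC^*$, so the $\CC^*$-invariant morphism $\eta_\Rt^{}$ factors through a morphism on $\PP(\Ext^1(\Lambda', \Lambda''))$. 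This completes part~(i), and part~(ii) follows verbatim after the dual substitutions, using that the only stable subsheaf of the same slope of a stable $\Lambda''$ is $\Lambda''$ itself so that $\delta_\Lt^{}$ is an isomorphism.
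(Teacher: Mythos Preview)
Your proof is correct and follows essentially the same route as the paper: reduce to a single summand via $\If(\Lambda'') = \{\Lambda''\}$, identify $\Ext^1(\Lambda', \Lambda'')^\ses = \Ext^1(\Lambda', \Lambda'') \smallsetminus \{0\}$ by showing any surjection $\Lambda'' \to \Delta''$ is an isomorphism, then use Theorem~\ref{orbits}(i) and $\Aut(\Lambda'') \simeq \CC^*$ to factor $\eta_\Rt^{}$ through projective space. You supply more detail than the paper on two points---the slope argument for the isomorphism $\Lambda'' \simeq \Delta''$ and the categorical-quotient justification that $\theta_\Rt^{}$ is a morphism---but the structure is the same.
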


\begin{proof}
Any sheaf that is S-equivalent to $\Lambda''$ must be isomorphic to $\Lambda''$ because the latter is assumed to be stable.
From Theorem~\ref{orbits}(i) we have
\[
\fiberR(\langle \Lambda' \rangle, \langle \Lambda'' \rangle) = \eta_\Rt^{}(\Ext^1(\Lambda', \Lambda'')^\ses).
\]
The surjective morphism $\Lambda'' \to \Delta''$ in Proposition~\ref{fibers}(i) must be an isomorphism,
hence $\delta_\Rt^{}$ is an isomorphism, and hence
\[
\Ext^1(\Lambda', \Lambda'')^\ses = \Ext^1(\Lambda', \Lambda'') \smallsetminus \{ 0 \}.
\]
According to Theorem~\ref{orbits}(i), the fibers of $\eta_\Rt^{}$ are precisely the orbits modulo the action of
$\Aut(\Lambda'') \simeq \CC^*$ by multiplication.
It follows that $\eta_\Rt^{}$ factors through a bijective morphism $\theta_\Rt^{}$.
This concludes the proof of (i).
To prove part (ii) we take advantage of Proposition~\ref{fibers}(ii) and of Theorem~\ref{orbits}(ii).
\end{proof}

\noindent
It can be proved that $\theta_\Rt^{}$ and $\theta_\Lt^{}$ are isomorphisms, but we do not need this fact.
In \cite{choi_chung} one can find the particular cases of Propositions~\ref{fibers} and \ref{fibers_stable}
in which $X = \PP^2$, $\Theta$ is stable, and $P(x)$ is one of the following: $4 x + 1$, $4 x + 3$, $5 x + 1$.

\section{The dimension of the extension spaces}
\label{dimension}

\noindent
In this section we shall restrict our study to the case when $X$ is a smooth projective polarized surface over $\CC$.
We shall compute the dimension of the extension spaces $\Ext^1(\Lambda', \Lambda'')$ and $\Ext^1(\Lambda'', \Lambda')$
occurring in Theorem~\ref{orbits}.
Given coherent systems $\Lambda_1^{}$ and $\Lambda_2^{}$ on $X$ we use the standard notation
\[
\euler(\Lambda_1^{}, \Lambda_2^{}) = \sum_{i \ge 0} (-1)^i \ext^i(\Lambda_1^{}, \Lambda_2^{}).
\]
For the convenience of the reader we include the following well-known lemma.

\begin{lemma}
\label{chi_sheaves}
Let $X$ be a smooth projective surface over $\CC$.
Consider coherent $\Osh_X^{}$-modules $\F$ and $\G$.
Then we have the equation
\[
\euler(\F, \G) = - \rank(\F) \rank(\G) \euler(\Osh_X^{}) + \rank(\G) \euler(\F \tensor \omega_X^{})
+ \rank(\F) \euler(\G) - \langle \chern(\F), \chern(\G) \rangle.
\]
\end{lemma}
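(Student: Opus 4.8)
The plan is to compute $\euler(\F, \G) = \sum_i (-1)^i \ext^i(\F, \G)$ by reducing it, via the additivity of Euler characteristics in both variables, to the case of line bundles, where the Hirzebruch--Riemann--Roch theorem applies directly. First I would recall that $\euler(\F, \G)$ depends only on the classes of $\F$ and $\G$ in the Grothendieck group $\Grot(X)$: a short exact sequence $0 \to \F' \to \F \to \F'' \to 0$ induces a long exact sequence of $\Ext$-groups, so $\euler(\F, \G) = \euler(\F', \G) + \euler(\F'', \G)$, and likewise in the second argument. Since $X$ is a smooth projective surface, every coherent sheaf has a finite resolution by locally free sheaves, and every locally free sheaf has a class in $\Grot(X)$ expressible in terms of line bundles (for instance using the splitting principle, or simply filtering by rank). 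Hence both sides of the claimed identity are additive in $\F$ and in $\G$ over $\Grot(X)$, and it suffices to verify the formula when $\F = L$ and $\G = M$ are line bundles.

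For line bundles, $\ext^i(L, M) = \dim \H^i(X, L^{-1} \tensor M)$, so $\euler(L, M) = \euler(L^{-1} \tensor M)$. Then I would apply Riemann--Roch on the surface $X$:
\[
\euler(L^{-1} \tensor M) = \euler(\Osh_X^{}) + \tfrac{1}{2}\big( \langle D, D \rangle - \langle D, K_X^{} \rangle \big),
\]
where $D = \chern(M) - \chern(L)$ and $K_X^{} = \chern(\omega_X^{})$, using the intersection pairing $\langle\ ,\ \rangle$ on $\H^2(X, \ZZ)$. Expanding $\langle D, D\rangle = \langle \chern(L), \chern(L)\rangle - 2\langle \chern(L), \chern(M)\rangle + \langle\chern(M),\chern(M)\rangle$ and similarly for $\langle D, K_X^{}\rangle$, and comparing with the right-hand side of the lemma (where $\rank(L) = \rank(M) = 1$, $\euler(M) = \euler(\Osh_X^{}) + \tfrac12(\langle\chern(M),\chern(M)\rangle - \langle\chern(M),K_X^{}\rangle)$, and $\euler(L \tensor \omega_X^{}) = \euler(\Osh_X^{}) + \tfrac12(\langle\chern(L)+K_X^{},\chern(L)+K_X^{}\rangle - \langle\chern(L)+K_X^{},K_X^{}\rangle)$), one checks that the two expressions agree. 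This is a routine algebraic verification.

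The only genuine subtlety is matching the rank-dependence: the claimed formula carries explicit factors $\rank(\F)\rank(\G)\euler(\Osh_X^{})$, $\rank(\G)\euler(\F \tensor \omega_X^{})$, and $\rank(\F)\euler(\G)$, and I must confirm that these are exactly what the additive extension of the line-bundle formula produces. Concretely, writing $[\F] = \sum_a [L_a^{}] - (\text{correction terms of rank } 0)$ in $\Grot(X)$ and $[\G] = \sum_b [M_b^{}] - (\cdots)$, the bilinearity forces $\euler(\F,\G)$ to be a specific quadratic expression in the Chern data; the point is that $\rank$, $\euler(-\tensor\omega_X^{})$, and $\euler(-)$ are themselves additive on $\Grot(X)$, so the right-hand side of the lemma is also additive in $\F$ and $\G$, and therefore equality of the two additive functionals follows from their agreement on line bundles. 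The main obstacle, such as it is, is bookkeeping: being careful with signs in the intersection pairing, with the factor $\tfrac12$ in Riemann--Roch, and with the asymmetry between $\F$ and $\G$ (the $\omega_X^{}$ appears only with $\F$, reflecting Serre duality $\ext^i(\F,\G) = \ext^{2-i}(\G, \F\tensor\omega_X^{})^{\vee}$). No step presents a conceptual difficulty beyond these routine verifications.
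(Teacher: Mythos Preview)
Your proposal is correct and follows essentially the same approach as the paper: both reduce to line bundles via biadditivity on $\Grot(X)$, then compute $\euler(L^{-1}\tensor M)$ by Hirzebruch--Riemann--Roch and regroup terms. The paper streamlines the final step by invoking $\euler(\F^{-1}) = \euler(\F\tensor\omega_X^{})$ directly (which is Serre duality), rather than expanding $\euler(L\tensor\omega_X^{})$ from scratch as you suggest, but this is only a cosmetic difference.
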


\begin{proof}
Both sides in the above equation can be viewed as biadditive maps $\Grot(X) \times \Grot(X) \to \ZZ$,
where $\Grot(X)$ denotes the Grothendieck group of $X$.
The classes of line bundles generate $\Grot(X)$,
hence it is enough to prove the lemma in the particular case when $\F$ and $\G$ are line bundles.
Using the Hirzebruch-Riemann-Roch theorem, we calculate:
\begin{align*}
\euler(\F, \G) = {} & \euler(\F^{-1} \tensor \G) \\
= {} & \euler(\Osh_X^{}) +
\frac{1}{2} \langle \chern(\F^{-1} \tensor \G), \, \chern(\F^{-1} \tensor \G) \rangle - \frac{1}{2} \langle \chern(\F^{-1} \tensor \G), \, \chern(\omega_X^{}) \rangle \\
= {} & \euler(\Osh_X^{}) + \frac{1}{2} \langle \chern(\F^{-1}), \chern(\F^{-1}) \rangle + \frac{1}{2} \langle \chern(\G), \chern(\G) \rangle
+ \langle \chern(\F^{-1}), \chern(\G) \rangle \\
& - \frac{1}{2} \langle \chern(\F^{-1}), \chern(\omega_X^{}) \rangle - \frac{1}{2} \langle \chern(\G), \chern(\omega_X^{}) \rangle \\
= {} & - \euler(\Osh_X^{})
+ \Big( \euler(\Osh_X^{}) + \frac{1}{2} \langle \chern(\F^{-1}), \chern(\F^{-1}) \rangle - \frac{1}{2} \langle \chern(\F^{-1}), \chern(\omega_X^{}) \rangle \Big) \\
& + \Big( \euler(\Osh_X^{}) + \frac{1}{2} \langle \chern(\G), \chern(\G) \rangle - \frac{1}{2} \langle \chern(\G), \chern(\omega_X^{}) \rangle \Big)
- \langle \chern(\F), \chern(\G) \rangle \\
= {} & - \euler(\Osh_X^{}) + \euler(\F^{-1}) + \euler(\G) - \langle \chern(\F), \chern(\G) \rangle \\
= {} & - \euler(\Osh_X^{}) + \euler(\F \tensor \omega_X^{}) + \euler(\G) - \langle \chern(\F), \chern(\G) \rangle. \qedhere
\end{align*}
\end{proof}

\begin{proposition}
\label{hom_vanishes}
We assume that $\alpha$ is a singular value relative to the polynomial $P(x) = r x + t$.
We assume that $\MX^\pss$ \emph{contains} the space $\MXprime \times \MXsecond$.
We consider closed points $\langle \Lambda' \rangle \in \MXprime$ and $\langle \Lambda'' \rangle \in \MXsecond$.
We make the following claims:
\begin{align*}
\tag{i} \Hom(\Lambda', \Lambda'') = \{ 0 \}; \\
\tag{ii} \Hom(\Lambda'', \Lambda') = \{ 0 \}.
\end{align*}
\end{proposition}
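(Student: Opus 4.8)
The plan is to play the $\alpha$-stability of $\Lambda'$ off against the semistability of $\Lambda''$, using that all three coherent systems share the same reduced invariant. Set $c := \p_\alpha^{}(\Lambda) = (\alpha + t)/r$. From $(\alpha + t')/r' = t''/r''$, $r = r' + r''$ and $t = t' + t''$ one reads off $\alpha + t' = c\,r'$ and $t'' = c\,r''$, so that $\p_\alpha^{}(\Lambda') = c$ and $\p(\Lambda'') = c$. Recall also that, $\Lambda'$ being $\alpha$-semistable, its underlying sheaf $\F'$ is pure of dimension one, and the semistable sheaf $\Lambda''$ is pure of dimension one; hence every nonzero subsheaf of $\F'$ or of $\Lambda''$ has positive multiplicity and a well-defined slope.

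For (i), suppose $f \colon \Lambda' \to \Lambda''$ were nonzero. Since $\Lambda''$ has no sections, $f$ is the datum of a nonzero sheaf morphism $f_\F^{} \colon \F' \to \Lambda''$ whose induced map on global sections vanishes on $\Gamma'$. Put $\K = \Ker f_\F^{} \subsetneq \F'$; then $\Gamma' \subseteq \H^0(\K)$, so $\K \neq 0$, and $\Sigma := (\Gamma', \K)$ is a coherent subsystem of $\Lambda'$ of order one (it is a subsystem because $\Gamma' \subseteq \H^0(\K)$, and proper because $\K \neq \F'$). The $\alpha$-stability of $\Lambda'$ gives $\p_\alpha^{}(\Sigma) < c$, i.e.\ $\alpha + \euler(\K) < c\,\mult(\K)$. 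On the other hand $\F'/\K \cong f_\F^{}(\F')$ is a nonzero, hence one-dimensional, subsheaf of the semistable sheaf $\Lambda''$, so $\euler(\F'/\K) \le c\,\mult(\F'/\K)$; substituting $\euler(\F') = t'$, $\mult(\F') = r'$ and $\alpha + t' = c\,r'$ and rearranging, this becomes $\alpha + \euler(\K) \ge c\,\mult(\K)$, a contradiction. Hence $f = 0$.

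For (ii), suppose $g \colon \Lambda'' \to \Lambda'$ were nonzero. As $\Lambda''$ has no sections, $g$ is simply a nonzero sheaf morphism $g_\F^{} \colon \Lambda'' \to \F'$; put $\G = g_\F^{}(\Lambda'') \subseteq \F'$, a nonzero, hence one-dimensional, subsheaf. Then $(0, \G)$ is a proper coherent subsystem of $\Lambda'$ (proper because $\Lambda'$ has order one while $(0, \G)$ has order zero), so the $\alpha$-stability of $\Lambda'$ forces $\p(\G) = \p_\alpha^{}((0, \G)) < c$. But $\G$ is a one-dimensional quotient of the semistable sheaf $\Lambda''$, so $\p(\G) \ge \p(\Lambda'') = c$, a contradiction. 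Hence $g = 0$.

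The argument is entirely elementary and I do not anticipate a real obstacle: the only points needing a little care are the identification of morphisms to and from the order-zero system $\Lambda''$ with the corresponding morphisms of sheaves, and the verification that the auxiliary sheaves $\K$, $\F'/\K$ and $\G$ are one-dimensional, so that the slope estimates are legitimate.
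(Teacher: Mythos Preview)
Your proof is correct. Both arguments rest on the same slope comparison between the $\alpha$-stable $\Lambda'$ and the semistable sheaf $\Lambda''$ of equal $\alpha$-slope, but the packaging differs. The paper invokes directly the standard fact that a nonzero morphism from a stable object to a semistable object of the same slope must be injective (respectively, from semistable to stable must be surjective), and then observes that no such morphism can exist because on the section spaces one would need an injection $\Gamma' \hookrightarrow \{0\}$ (respectively a surjection $\{0\} \twoheadrightarrow \Gamma'$). Your argument instead unpacks this standard fact by hand: in (i) you exhibit the kernel subsystem $(\Gamma', \K)$ and play its slope against that of the image $\F'/\K \subset \Lambda''$; in (ii) you exhibit the image subsystem $(0, \G)$ and compare with the quotient slope of $\Lambda''$. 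The paper's route is shorter and more conceptual; yours is self-contained and makes explicit exactly where purity is used to ensure the auxiliary sheaves are one-dimensional.
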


\begin{proof}
Write $\Lambda' = (\Gamma', \F')$ and $\Lambda'' = (\Gamma'', \F'') = (\{ 0 \}, \F'')$. 
Any non-zero morphism $\Lambda' \to \Lambda''$ would have to be injective,
because $\Lambda'$ is $\alpha$-stable, $\Lambda''$ is $\alpha$-semi-stable and $\p_{\alpha}^{}(\Lambda') = \p_{\alpha}^{}(\Lambda'')$.
However, there is no injective morphism $\Gamma' \to \Gamma''$.
Likewise, any non-zero morphism $\Lambda'' \to \Lambda'$ would have to be surjective,
yet there is no surjective morphism $\Gamma'' \to \Gamma'$.
\end{proof}

\begin{proposition}
\label{ext_surface}
Let $X$ be a smooth projective polarized surface over $\CC$.
Assume that $\alpha$ is a singular value relative to the polynomial $P(x) = r x + t$.
Assume that $\MX^\pss$ \emph{contains} the space $\MXprime \times \MXsecond$.
Consider closed points $\langle \Lambda' \rangle \in \MXprime$ and $\langle \Lambda'' \rangle \in \MXsecond$.
Write $\Lambda' = (\Gamma', \F')$ and $\Lambda'' = (\Gamma'', \F'') = (\{ 0 \}, \F'')$.
Then we have the following formulas:
\begin{align*}
\tag{i}
\ext^1(\Lambda', \Lambda'') & = \langle \chern(\Lambda'), \chern(\Lambda'') \rangle + \euler(\Lambda'') + \ext^2(\Lambda', \Lambda''); \\
\tag{ii}
\ext^1(\Lambda'', \Lambda') & = \langle \chern(\Lambda'), \chern(\Lambda'') \rangle + \hom(\F', \F'' \tensor \omega_X^{}).
\end{align*}
\end{proposition}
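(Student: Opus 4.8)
The plan is to derive both formulas from the Euler characteristics $\euler(\Lambda', \Lambda'')$ and $\euler(\Lambda'', \Lambda')$, which are controlled by Lemma~\ref{chi_sheaves}, and then to isolate $\ext^1$ with the help of Proposition~\ref{hom_vanishes} and two vanishing statements coming from the geometry: $\Ext^i_{\Osh_X}(-, -) = \{ 0 \}$ for $i \ge 3$ (Serre duality, since $X$ is a surface), and $\H^2(X, \F'') = \{ 0 \}$ (since $\F''$ is supported on a curve).

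The main device is the hyper-$\Ext$ long exact sequence for coherent systems. Viewing a coherent system $\Lambda_j = (\Gamma_j, \F_j)$ as its evaluation morphism $\Gamma_j \tensor \Osh_X \to \F_j$, a morphism $\Lambda_1 \to \Lambda_2$ is a commutative square, so $R\Hom(\Lambda_1, \Lambda_2)$ is the homotopy fibre of
\[
R\Hom_{\Osh_X}(\F_1, \F_2) \oplus \Hom_\CC(\Gamma_1, \Gamma_2) \lra \Gamma_1^* \tensor R\Gamma(X, \F_2),
\]
the first arrow being precomposition with the evaluation of $\Lambda_1$ and the second being postcomposition with the evaluation of $\Lambda_2$; cf.\ \cite{he}. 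For (i) we take $(\Lambda_1, \Lambda_2) = (\Lambda', \Lambda'')$; since $\Gamma'' = \{ 0 \}$ the summand $\Hom_\CC(\Gamma', \Gamma'')$ drops out and we obtain the long exact sequence
\[
\cdots \lra \Ext^i(\Lambda', \Lambda'') \lra \Ext^i_{\Osh_X}(\F', \F'') \lra \H^i(X, \F'') \lra \Ext^{i + 1}(\Lambda', \Lambda'') \lra \cdots.
\]
Because $\Ext^i_{\Osh_X}(\F', \F'') = \{ 0 \}$ for $i \ge 3$ and $\H^2(X, \F'') = \{ 0 \}$, this forces $\Ext^i(\Lambda', \Lambda'') = \{ 0 \}$ for $i \ge 3$; taking alternating sums of dimensions then gives $\euler(\Lambda', \Lambda'') = \euler(\F', \F'') - \euler(\Lambda'')$, the subtracted term being $\dim_\CC \Gamma' \cdot \euler(\F'') = \euler(\F'') = \euler(\Lambda'')$. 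For (ii) we take $(\Lambda_1, \Lambda_2) = (\Lambda'', \Lambda')$; now $\Gamma'' = \{ 0 \}$ annihilates both the $\Hom_\CC$-summand and the target, so $\Ext^i(\Lambda'', \Lambda') \cong \Ext^i_{\Osh_X}(\F'', \F')$ for every $i$, and in particular $\euler(\Lambda'', \Lambda') = \euler(\F'', \F')$ and $\ext^1(\Lambda'', \Lambda') = \ext^1_{\Osh_X}(\F'', \F')$.

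It remains to substitute Lemma~\ref{chi_sheaves} and to evaluate the low-degree terms. Since $\langle \Lambda' \rangle$ and $\langle \Lambda'' \rangle$ are semi-stable points with $r' > 0$ and $r'' > 0$, the sheaves $\F'$ and $\F''$ are pure of dimension one on the surface $X$, hence of rank zero, so Lemma~\ref{chi_sheaves} collapses to $\euler(\F', \F'') = \euler(\F'', \F') = -\langle \chern(\F'), \chern(\F'') \rangle = -\langle \chern(\Lambda'), \chern(\Lambda'') \rangle$. For (i), Proposition~\ref{hom_vanishes}(i) gives $\ext^0(\Lambda', \Lambda'') = 0$, and together with the vanishing of $\ext^i(\Lambda', \Lambda'')$ for $i \ge 3$ the relation $\euler(\Lambda', \Lambda'') = -\ext^1(\Lambda', \Lambda'') + \ext^2(\Lambda', \Lambda'')$ rearranges into the claimed formula~(i). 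For (ii), Serre duality on $X$ gives $\ext^2_{\Osh_X}(\F'', \F') = \hom(\F', \F'' \tensor \omega_X)$, while $\hom(\F'', \F') = \hom(\Lambda'', \Lambda') = 0$ by Proposition~\ref{hom_vanishes}(ii); since $\ext^i_{\Osh_X}(\F'', \F') = 0$ for $i \ge 3$, the relation $\euler(\F'', \F') = -\ext^1_{\Osh_X}(\F'', \F') + \hom(\F', \F'' \tensor \omega_X)$ rearranges into~(ii). The one step that is not pure bookkeeping is the justification of the hyper-$\Ext$ exact sequence and, with it, the vanishing $\Ext^i(\Lambda', \Lambda'') = \{ 0 \}$ for $i \ge 3$ — this is precisely what makes the right-hand side of~(i) stop at $\ext^2$ — and it rests on $\F''$ having one-dimensional support together with $X$ being a surface.
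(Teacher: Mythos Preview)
Your proof is correct and follows essentially the same route as the paper: the paper cites \cite[Corollaire~1.6]{he} for the long exact sequence relating $\Ext^i(\Lambda', \Lambda'')$, $\Ext^i_{\Osh_X}(\F', \F'')$ and $\Hom(\Gamma', \H^i(\F''))$, while you derive the same sequence from the homotopy-fibre description of $R\Hom$ between coherent systems. The only notable difference is that you are more explicit than the paper about why the Euler-characteristic identity $\euler(\Lambda', \Lambda'') = \euler(\F', \F'') - \euler(\F'')$ involves only finitely many terms, spelling out that $\Ext^i_{\Osh_X}(\F', \F'')$ vanishes for $i \ge 3$ and $\H^2(\F'') = \{0\}$; the paper leaves this implicit when it truncates He's exact sequence at $\Hom(\Gamma', \H^2(\F'')) = \{0\}$.
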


\begin{proof}
(i) According to \cite[Corollaire 1.6]{he}, we have the exact sequence
\begin{align*}
0 & \lra \Hom(\Lambda', \Lambda'') \lra \Hom(\F', \F'') \lra \Hom(\Gamma', \H^0(\F'') / \Gamma'') \\
& \lra \Ext^1(\Lambda', \Lambda'') \lra \Ext^1(\F', \F'') \lra \Hom(\Gamma', \H^1(\F'')) \\
& \lra \Ext^2(\Lambda', \Lambda'') \lra \Ext^2(\F', \F'') \lra \Hom(\Gamma', \H^2(\F'')) = \{ 0 \}.
\end{align*}
This yields the relation $\euler(\Lambda', \Lambda'') = \euler(\F', \F'') - \euler(\F'')$.
From Lemma~\ref{chi_sheaves}, and taking into account that both $\F'$ and $\F''$ have rank zero, we obtain the formula
\[
\euler(\F', \F'') = - \langle \chern(\F'), \chern(\F'') \rangle.
\]
Thus,
\[
\hom(\Lambda', \Lambda'') - \ext^1(\Lambda', \Lambda'') + \ext^2(\Lambda', \Lambda'')
= - \langle \chern(\Lambda'), \chern(\Lambda'') \rangle - \euler(\Lambda'').
\]
Formula (i) follows from the vanishing of $\Hom(\Lambda', \Lambda'')$, proved at Proposition~\ref{hom_vanishes}(i).

\medskip

\noindent
(ii) According to \cite[Corollaire 1.6]{he}, we have the exact sequence
\begin{align*}
0 & \lra \Hom(\Lambda'', \Lambda') \lra \Hom(\F'', \F') \lra \Hom(\Gamma'', \H^0(\F') / \Gamma') = \{ 0 \} \\
& \lra \Ext^1(\Lambda'', \Lambda') \lra \Ext^1(\F'', \F') \lra \Hom(\Gamma'', \H^1(\F')) = \{ 0 \} \\
& \lra \Ext^2(\Lambda'', \Lambda') \lra \Ext^2(\F'', \F') \lra \Hom(\Gamma'', \H^2(\F')) = \{ 0 \}.
\end{align*}
We obtain the relations
\begin{align*}
\ext^1(\Lambda'', \Lambda') = \ext^1(\F'', \F') & = - \euler(\F'', \F') + \hom(\F'', \F') + \ext^2(\F'', \F') \\
& = \langle \chern(\Lambda'), \chern(\Lambda'') \rangle + \hom(\Lambda'', \Lambda') + \ext^2(\F'', \F').
\end{align*}
Formula (ii) follows from the vanishing of $\Hom(\Lambda'', \Lambda')$, proved at Proposition~\ref{hom_vanishes}(ii), and from Serre duality.
\end{proof}

\begin{proposition}
\label{ext_delPezzo}
We adopt the assumptions of Proposition~\ref{ext_surface}.
We assume that $X$ is a del~Pezzo surface or a K3 surface.
\begin{enumerate}
\item[\emph{(i)}]
We assume, in addition, that $\H^1(\F'') = \{ 0 \}$.
Then $\Ext^2(\Lambda', \Lambda'') = \{ 0 \}$ and
\[
\ext^1(\Lambda', \Lambda'') = \langle \chern(\Lambda'), \chern(\Lambda'') \rangle + \euler(\Lambda'').
\]
\item[\emph{(ii)}]
We assume, in addition, that $\H^0(\F'' \tensor \omega_X^{}) = \{ 0 \}$.
Then $\Hom(\F', \F'' \tensor \omega_X^{}) = \{ 0 \}$ and
\[
\ext^1(\Lambda'', \Lambda') = \langle \chern(\Lambda'), \chern(\Lambda'') \rangle. \phantom{{} + \euler(\Lambda'')}
\]
\end{enumerate}
\end{proposition}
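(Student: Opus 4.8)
The plan is to reduce both parts to the vanishing of a single space of homomorphisms between the sheaves, and then to extract that vanishing from the semi-stability of $\F''$, the $\alpha$-stability of $\Lambda'$, and the hypothesis on $X$, which I only use through the inequality $\langle \chern(\omega_X), D \rangle \le 0$ for every effective curve class $D$ on $X$ (this is strict for a del~Pezzo surface, since $-\chern(\omega_X)$ is ample, and an equality for a K3 surface, since $\chern(\omega_X) = 0$). For part~(i), Proposition~\ref{ext_surface}(i) already gives $\ext^1(\Lambda', \Lambda'') = \langle \chern(\Lambda'), \chern(\Lambda'') \rangle + \euler(\Lambda'') + \ext^2(\Lambda', \Lambda'')$, so it suffices to prove $\Ext^2(\Lambda', \Lambda'') = \{ 0 \}$. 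Feeding the hypothesis $\H^1(\F'') = \{ 0 \}$ into the long exact sequence displayed in the proof of Proposition~\ref{ext_surface}(i), and using that $\H^2(\F'') = \{ 0 \}$ automatically because $\F''$ has dimension one, one sees that $\Ext^2(\Lambda', \Lambda'')$ injects into $\Ext^2(\F', \F'')$, which Serre duality identifies with $\Hom(\F'', \F' \tensor \omega_X)^*$. For part~(ii), Proposition~\ref{ext_surface}(ii) reduces the claim directly to $\Hom(\F', \F'' \tensor \omega_X) = \{ 0 \}$.

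To prove $\Hom(\F'', \F' \tensor \omega_X) = \{ 0 \}$, I would rewrite it, by tensoring with $\omega_X^{-1}$, as $\Hom(\F'' \tensor \omega_X^{-1}, \F')$ and suppose $\psi$ is a nonzero element. Its image $\G$ is a nonzero subsheaf of $\F'$, hence pure of dimension one with $\mult(\G) > 0$. On one side, Lemma~\ref{HN}(i) says that the Harder--Narasimhan slopes of $\F'$ all lie strictly below $\p_\alpha(\Lambda')$, so every subsheaf of $\F'$, in particular $\G$, has slope strictly less than $\p_\alpha(\Lambda')$. On the other side, $\G$ is a quotient of $\F'' \tensor \omega_X^{-1}$, so $\G \tensor \omega_X$ is a quotient of the semi-stable sheaf $\F''$ and therefore $\p(\G \tensor \omega_X) \ge \p(\F'')$; recalling $\p(\F'') = \p_\alpha(\Lambda'') = \p_\alpha(\Lambda')$ and that $\p(\G \tensor \omega_X) = \p(\G) + \langle \chern(\omega_X), \chern(\G) \rangle / \mult(\G) \le \p(\G)$ by the hypothesis on $X$, this yields $\p_\alpha(\Lambda') \le \p(\G) < \p_\alpha(\Lambda')$, a contradiction. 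Hence $\Ext^2(\F', \F'') = \{ 0 \}$, so $\Ext^2(\Lambda', \Lambda'') = \{ 0 \}$, and part~(i) follows from Proposition~\ref{ext_surface}(i).

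For part~(ii) the new ingredient is the section. Write $\Gamma' = \CC s'$ for some nonzero $s' \in \H^0(\F')$ and suppose $\varphi \colon \F' \to \F'' \tensor \omega_X$ is nonzero. Because $\Img \varphi$ is a subsheaf of $\F'' \tensor \omega_X$ and $\H^0(\F'' \tensor \omega_X) = \{ 0 \}$ by hypothesis, the map $\H^0(\F') \to \H^0(\Img \varphi)$ is zero, whence $\Gamma' \subset \H^0(\Ker \varphi)$ and in particular $\Ker \varphi \neq \{ 0 \}$. Thus $(\Gamma', \Ker \varphi)$ is a nonzero proper coherent subsystem of $\Lambda'$, and the corresponding quotient coherent system is the order-zero system $(\{ 0 \}, \Img \varphi)$. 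The $\alpha$-stability of $\Lambda'$ then forces $\p_\alpha(\Lambda') < \p(\Img \varphi)$. But $\Img \varphi$ is a subsheaf of $\F'' \tensor \omega_X$, so $(\Img \varphi) \tensor \omega_X^{-1}$ is a subsheaf of the semi-stable sheaf $\F''$; hence $\p((\Img \varphi) \tensor \omega_X^{-1}) \le \p(\F'') = \p_\alpha(\Lambda')$, and since $\p(\Img \varphi) \le \p((\Img \varphi) \tensor \omega_X^{-1})$ by the hypothesis on $X$ we get $\p_\alpha(\Lambda') < \p(\Img \varphi) \le \p_\alpha(\Lambda')$, a contradiction. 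Therefore $\Hom(\F', \F'' \tensor \omega_X) = \{ 0 \}$, and Proposition~\ref{ext_surface}(ii) yields the stated formula. The section hypothesis is precisely what is needed here: without it, the subsystem $(\Gamma', \Ker \varphi)$ could carry the section, the quotient would acquire a positive $\alpha$-contribution in its slope, and the inequality produced by $\alpha$-stability would no longer be contradictory.

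The computations are routine; the points that require care are the bookkeeping of the twists by $\omega_X^{\pm 1}$ — checking that the relevant image sheaves stay pure of dimension one with positive multiplicity so that their slopes make sense, and that tensoring by a line bundle $L$ shifts the slope by $\langle \chern(L), \chern(\G) \rangle / \mult(\G)$ — together with the correct use of the del~Pezzo/K3 hypothesis, which must enter only as $\langle \chern(\omega_X), D \rangle \le 0$ for effective $D$ so that it always pushes the inequalities in the favourable direction. The only genuinely non-mechanical step is the observation in part~(ii) that a nonzero map from $\F'$ to a section-free sheaf must kill the distinguished section of $\F'$, thereby manufacturing a subsystem of $\Lambda'$ on which $\alpha$-stability can be brought to bear.
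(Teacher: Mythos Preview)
Your proof is correct, but it proceeds differently from the paper's.

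For part~(i), the paper avoids the Harder--Narasimhan filtration entirely: it observes that on a del~Pezzo surface there is a section $s \in \H^0(-\omega_X)$ which is a non-zero-divisor for the pure sheaf $\F'$ (and on a K3 surface $\omega_X \simeq \Osh_X$), so multiplication by $s$ embeds $\Hom(\F'', \F' \tensor \omega_X)$ into $\Hom(\F'', \F') = \Hom(\Lambda'', \Lambda')$, which vanishes by Proposition~\ref{hom_vanishes}(ii). Your route via Lemma~\ref{HN}(i) and the slope shift under twist by $\omega_X$ is longer but has the virtue of using the del~Pezzo/K3 hypothesis only numerically, through $\langle \chern(\omega_X), D \rangle \le 0$.

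For part~(ii) the roles are reversed. The paper invokes the Harder--Narasimhan filtration of $\Lambda'$ with $\infty$-semi-stable terms (Lemma~\ref{HN}(ii)), kills $\Hom(\G_i/\G_{i-1}, \F'' \tensor \omega_X)$ for $i \ge 2$ by slope comparison after embedding into $\Hom(\G_i/\G_{i-1}, \F'')$, and then reduces to $\Hom(\Osh_C, \F'' \tensor \omega_X) \hookrightarrow \H^0(\F'' \tensor \omega_X)$ using that the bottom piece $\G_1$ is generated, up to a zero-dimensional quotient, by the global section. Your argument is more direct: a nonzero $\varphi$ into the section-free sheaf $\F'' \tensor \omega_X$ must kill $\Gamma'$, producing the subsystem $(\Gamma', \Ker \varphi)$ whose quotient $(\{0\}, \Img \varphi)$ violates $\alpha$-stability after one twist. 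This bypasses Lemma~\ref{HN}(ii) altogether. Both approaches are sound; yours is arguably cleaner for~(ii), while the paper's is cleaner for~(i).
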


\begin{proof}
If $X$ is a del~Pezzo surface and $\F$ is a pure sheaf of dimension $1$ on $X$,
then there is a global section $s \in \H^0(-\omega_X^{})$ that is a non-zero-divisor relative to $\F$,
i.e.\ the map $\F \tensor \omega_X^{} \xrightarrow{\cdot s} \F$ is injective.
This is equivalent to saying that the zero-set of $s$ does not contain any irreducible component of the support of $\F$.
This statement follows from the fact that $\left| -\omega_X^{} \right|$ has no fixed part, see \cite[Theorem 8.3.2]{dolgachev}

\medskip

\noindent
(i) According to \cite[Corollaire 1.6]{he}, we have the exact sequence
\[
\{ 0 \} = \Hom(\Gamma', \H^1(\F'')) \lra \Ext^2(\Lambda', \Lambda'') \lra \Ext^2(\F', \F'').
\]
We reduce the problem to showing that $\Ext^2(\F', \F'') = \{ 0 \}$.
By Serre duality, this is equivalent to the vanishing of $\Hom(\F'', \F' \tensor \omega_X^{})$.
Under either hypothesis that $X$ be a del~Pezzo surface or a K3 surface,
$\Hom(\F'', \F' \tensor \omega_X^{})$ is isomorphic to a subspace of $\Hom(\F'', \F')$.
According to Proposition~\ref{hom_vanishes}(ii), $\Hom(\F'', \F') \simeq \Hom(\Lambda'', \Lambda')$ vanishes.
The desired expression for $\ext^1(\Lambda', \Lambda'')$ follows from Proposition~\ref{ext_surface}(i).

\medskip

\noindent
(ii) As per Lemma~\ref{HN}(ii), we consider the Harder-Narasimhan filtration of $\Lambda'$ with $\infty$-semi-stable terms
\[
\{ 0 \} = \Delta_0^{} \subset \dots \subset \Delta_l^{} = \Lambda',
\]
where $\Delta_i^{} = (\Gamma', \G_i^{})$ for $1 \le i \le l$.
We claim that $\Hom(\G_i^{} / \G_{i - 1}^{}, \, \F'' \tensor \omega_X^{})$ vanishes for $2 \le i \le l$.
Under either hypothesis that $X$ be a del~Pezzo surface or a K3 surface,
this space is isomorphic to a subspace of $\Hom(\G_i^{} / \G_{i - 1}^{}, \, \F'')$.
For $2 \le i \le l$, we have the relations
\[
\p(\F'') = \p_{\alpha}^{}(\Lambda'') = \p_{\alpha}^{}(\Lambda') < \p(\G_i^{} / \G_{i - 1}).
\]
Since both $\F''$ and $\G_i^{} / \G_{i - 1}^{}$ are semi-stable, we deduce that $\Hom(\G_i^{} / \G_{i - 1}, \F'') = \{ 0 \}$.
This proves the claim.
Let $\Osh_C^{}$ be the subsheaf of $\G_1^{}$ generated by $\Gamma'$.
Write $\Poly_{\G_1}^{}(x) = r_1^{} x + t_1^{}$, $\Poly_{\Osh_C}^{}(x) = r_1' x + t_1'$.
If $r_1' < r_1^{}$, then
\[
\p_\beta^{}(\Gamma', \Osh_C^{})
= \frac{\beta + t_1'}{r_1'} > \frac{\beta + t_1^{}}{r_1^{}}
= \p_\beta^{}(\Delta_1^{}) \quad \text{for} \quad \beta \gg 0.
\]
This would contradict the fact that $\Delta_1^{}$ is $\infty$-semi-stable.
Thus, $r_1' = r_1^{}$, $\Poly_{\G_1 / \Osh_C}^{}(x) = t_1^{} - t_1'$,
hence $\G_1^{} / \Osh_C^{}$ is supported on finitely many points or is zero.
From the exact sequences
\[
\{ 0 \} = \Hom(\G_i^{} / \G_{i - 1}^{}, \F'' \tensor \omega_X^{}) \lra \Hom(\G_i^{}, \F'' \tensor \omega_X^{})
\lra \Hom(\G_{i - 1}^{}, \F'' \tensor \omega_X^{})
\]
for $2 \le i \le l$ and
\[
\{ 0 \} = \Hom(\G_1^{} / \Osh_C^{}, \F'' \tensor \omega_X^{}) \lra \Hom(\G_1^{}, \F'' \tensor \omega_X^{})
\lra \Hom(\Osh_C^{}, \F'' \tensor \omega_X^{})
\]
we deduce that $\Hom(\F', \F'' \tensor \omega_X^{})$ can be embedded in $\Hom(\Osh_C^{}, \F'' \tensor \omega_X^{})$.
This space can be embedded in $\H^0(\F'' \tensor \omega_X^{})$ because $\Osh_C^{}$ is generated by a global section.
By hypothesis, the latter space vanishes.
The desired expression for $\ext^1(\Lambda'', \Lambda')$ follows from Proposition~\ref{ext_surface}(ii).
\end{proof}

\section{Automorphisms of torsion sheaves on the projective line}
\label{automorphisms}

\noindent
In this section we prove that the automorphism group of a coherent sheaf on $\PP^1 = \PP^1(\CC)$ concentrated at a point
is of the form $U \rtimes P$, where $U$ is unipotent and $P$ is a parabolic subgroup of a general linear group.
This result will be used in the proof of Theorem~\ref{euler_fibers}.

Let $\CC[\zeta]$ be the polynomial ring over $\CC$ in one variable
and consider the principal ideal domain $R = \CC[\zeta]_{(\zeta)}^{}$.
Let $\Z$ be an $R$-module of finite dimension over $\CC$.
The structure theorem for finitely generated modules over a principal ideal domain tells us that
\[
\Z \simeq \bigoplus_{1 \le i \le n} R / (\zeta^{\nu_i}) \simeq \bigoplus_{1 \le i \le n} \CC[\zeta] / (\zeta^{\nu_i})
\]
for some integers $\nu_1^{} \ge \dots \ge \nu_n^{} > 0$ and $n \ge 1$.
Let $U$ and $P$ be the kernel, respectively, the image of the canonical morphism of algebraic groups
\begin{alignat*}{2}
\Aut_R^{}(\Z) & \lra \Aut_R^{}(\Z / \zeta \Z) && \simeq \GL(n, \CC). \\
\intertext{Let $\bar{U}$ and $\bar{P}$ be the kernel, respectively, the image of the canonical morphism of algebraic groups}
\Aut_R^{} (\Z) & \lra \Aut_R^{}(\Ann_{\Z}^{}(\zeta)) && \simeq \GL(n, \CC).
\end{alignat*}

\begin{remark}
\label{duality}
Dualising, i.e.\ applying $\Hom_\CC^{}(-, \CC)$ to the exact sequence of $R$-modules
\[
0 \lra \Ann_\Z^{}(\zeta) \lra \Z \overset{\cdot \zeta}{\lra} \Z \lra \Z / \zeta \Z \lra 0
\]
we obtain the exact sequence of $R$-modules
\[
0 \lra (\Z / \zeta \Z)^* \lra \Z^* \overset{\cdot \zeta}{\lra} \Z^* \lra (\Ann_{\Z}^{}(\zeta))^* \lra 0.
\]
Thus, we have a canonical isomorphism of $R$-modules $(\Ann_\Z^{}(\zeta))^* \simeq \Z^* / \zeta \Z^*$.
Fix a (non-canonical) isomorphism of $R$-modules $\varphi \colon \Z \to \Z^*$.
We have a commutative diagram of morphisms of algebraic groups
\[
\xymatrix
{
\Aut_R^{}(\Z) \ar[r] \ar[d]_-{\delta_1^{}}^-\simeq & \Aut_R^{}(\Ann_\Z^{}(\zeta)) \ar[d]^-{\delta_2^{}}_-\simeq \\
\Aut_R^{}(\Z^*) \ar[r] \ar[d]_-{\varphi_1^{}}^-\simeq & \Aut_R^{}(\Z^* / \zeta \Z^*) \ar[d]^-{\varphi_2^{}}_-\simeq \\
\Aut_R^{}(\Z) \ar[r] & \Aut_R^{}(\Z / \zeta \Z)
}
\]
in which $\delta_1^{}$ and $\delta_2^{}$ are the dualising isomorphisms, $\delta_1^{}(g)(h) = h \circ g^{-1}$,
while $\varphi_1^{}$ and $\varphi_2^{}$ are the isomorphisms induced by $\varphi$.
Since $\varphi_1^{} \circ \delta_1^{}(\bar{U}) = U$ and $\varphi_2^{} \circ \delta_2^{}(\bar{P}) = P$,
we obtain (non-canonical) isomorphisms of algebraic groups $U \simeq \bar{U}$, respectively, $P \simeq \bar{P}$.
\end{remark}

\begin{remark}
\label{hom_additive}
Let $\Z_1^{}$ and $\Z_2^{}$ be $R$-modules of finite dimension over $\CC$.
Then $(\Hom_R^{}(\Z_1^{}, \Z_2^{}), +)$ is a finite product of copies of $(\CC, +)$.
It is enough to verify this in the case when $\Z_1^{} = \CC[\zeta] / (\zeta^{\nu_1})$ and $\Z_2^{} = \CC[\zeta] / (\zeta^{\nu_2})$:
\[
(\Hom_R^{}(\Z_1^{}, \Z_2^{}), +) \simeq
\begin{cases}
\phantom{\zeta^{\nu_2 - \nu_1}} (\Z_2^{}, +) \simeq \prod_{\nu_2} (\CC, +) & \text{if $\nu_1^{} \ge \nu_2^{}$}, \\
(\zeta^{\nu_2 - \nu_1} \Z_2^{}, +) \simeq \prod_{\nu_1} (\CC, +) & \text{if $\nu_1^{} < \nu_2^{}$}.
\end{cases}
\]
\end{remark}

\begin{proposition}
\label{unipotent}
Let $U$ and $\bar{U}$ be the subgroups of $\Aut_R^{}(\Z)$ introduced above.
We claim that:
\begin{enumerate}
\item[\emph{(i)}] $U$ is unipotent;
\item[\emph{(ii)}] $\bar{U}$ is unipotent.
\end{enumerate}
\end{proposition}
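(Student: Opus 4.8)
The plan is to reduce everything to the standard fact that a subgroup of a general linear group over $\CC$ all of whose elements are unipotent endomorphisms of the ambient vector space is a unipotent algebraic group (Kolchin). Since $\Z$ is finite-dimensional over $\CC$, the ring $\End_R^{}(\Z)$ is a finite-dimensional $\CC$-algebra and $\Aut_R^{}(\Z) = \End_R^{}(\Z)^\times$ is naturally a closed algebraic subgroup of $\GL(\Z)$ (the $R$-linearity condition amounts to commuting with the single operator "multiplication by $\zeta$", which is linear); the subgroups $U$ and $\bar U$ are closed in $\Aut_R^{}(\Z)$, being kernels of morphisms of algebraic groups. Hence it suffices to prove that every $g \in U$, respectively every $g \in \bar U$, acts on the $\CC$-vector space $\Z$ as a unipotent operator. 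The whole point is that multiplication by $\zeta$ is nilpotent on $\Z$, with $\zeta^{\nu_1} \Z = \{ 0 \}$ since $\nu_1 \ge \nu_i$ for all $i$.

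For part (i), I would take $g \in U$ and set $h = g - \id_\Z^{} \in \End_R^{}(\Z)$. By definition of $U$ the automorphism $g$ induces the identity on $\Z / \zeta \Z$, which says exactly that $h(\Z) \subseteq \zeta \Z$. Since $h$ is $R$-linear it commutes with multiplication by $\zeta$, so an immediate induction gives $h^k(\Z) \subseteq \zeta^k \Z$ for every $k \ge 1$, namely $h^{k+1}(\Z) = h\bigl(h^k(\Z)\bigr) \subseteq h(\zeta^k \Z) = \zeta^k h(\Z) \subseteq \zeta^{k+1}\Z$. As $\zeta^{\nu_1} \Z = \{ 0 \}$ we conclude $h^{\nu_1} = 0$, so $h$ is nilpotent and $g = \id_\Z^{} + h$ is unipotent. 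This proves (i).

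For part (ii), the quickest route is to invoke Remark~\ref{duality}, which supplies a (non-canonical) isomorphism of algebraic groups $\bar U \simeq U$; unipotence of $\bar U$ then follows at once from (i). Alternatively, one can argue directly in the same spirit: for $g \in \bar U$ and $h = g - \id_\Z^{}$, the fact that $g$ restricts to the identity on $\Ann_\Z^{}(\zeta)$ gives, for any $z$ with $\zeta^j z = 0$, the identity $\zeta^{j-1} h(z) = g(\zeta^{j-1} z) - \zeta^{j-1} z = 0$, so $h$ maps $\Ann_\Z^{}(\zeta^j)$ into $\Ann_\Z^{}(\zeta^{j-1})$; iterating $\nu_1$ times and using $\Z = \Ann_\Z^{}(\zeta^{\nu_1})$ forces $h^{\nu_1} = 0$, whence $g$ is unipotent. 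I do not anticipate a real obstacle here; the only points that deserve care are checking that $U$ and $\bar U$ are genuinely closed subgroups of a general linear group so that the "all elements unipotent implies group unipotent" principle applies, and being precise that the relevant filtrations ($\Z \supseteq \zeta\Z \supseteq \zeta^2\Z \supseteq \cdots$, respectively $\{0\} \subseteq \Ann_\Z^{}(\zeta) \subseteq \Ann_\Z^{}(\zeta^2) \subseteq \cdots$) are strictly respected by $g - \id_\Z^{}$.
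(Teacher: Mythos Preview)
Your proof is correct, but the argument for (i) differs from the paper's. You show directly that each $g\in U$ is unipotent as an operator on $\Z$, using that $g-\id_\Z$ strictly lowers the $\zeta$-adic filtration $\Z\supset\zeta\Z\supset\cdots$, and then invoke Kolchin's theorem (a closed subgroup of $\GL(\Z)$ consisting entirely of unipotent operators is a unipotent algebraic group). The paper instead argues by induction on $\dim_\CC\Z$: it takes the largest $\nu$ with $\zeta^\nu\Z\neq\{0\}$, observes that the kernel of $\Aut_R(\Z/\zeta^\nu\Z)\to\Aut_R(\Z/\zeta\Z)$ is unipotent by induction, and then shows that the remaining kernel $U''=\Ker(U\to\Aut_R(\Z/\zeta^\nu\Z))$ embeds as a subgroup of the additive group $(\Hom_R(\Z,\zeta^\nu\Z),+)$ via $u\mapsto u-\id_\Z$; thus $U$ is built up as an extension of unipotent groups. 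Your approach is shorter and more transparent, at the cost of citing Kolchin; the paper's approach is self-contained in that it only uses that algebraic subgroups of $(\CC^m,+)$ are unipotent and that extensions of unipotent groups are unipotent. For (ii) both you and the paper appeal to the duality isomorphism $\bar U\simeq U$ from Remark~\ref{duality}; your alternative direct argument via the socle filtration $\Ann_\Z(\zeta^j)$ is also fine.
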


\begin{proof}
If $\zeta \Z = \{ 0 \}$, then $U = \{ 1 \}$, so $U$ is unipotent.
To prove (i) we perform induction on $\dim_\CC^{} \Z$.
If $\dim_\CC^{} \Z = 1$, then $\zeta \Z = \{ 0 \}$, hence $U$ is unipotent.
Assume that $\dim_\CC^{} \Z > 1$ and that $\zeta \Z \neq \{ 0 \}$.
Some power of $\zeta$ annihilates $\Z$, hence there is a largest integer $\nu \ge 1$ such that $\zeta^\nu \Z \neq \{ 0 \}$.
By the induction hypothesis, the kernel $U'$ of the morphism of algebraic groups
\[
\Aut_R^{}(\Z / \zeta^\nu \Z) \lra \Aut_R^{}(\Z / \zeta \Z)
\]
is unipotent.
Being an algebraic subgroup of a unipotent group over $\CC$, the image of the morphism $U \to U'$ is unipotent.
It remains to show that the kernel of this morphism, denoted $U''$, is unipotent, as well.
Consider the injective algebraic map
\[
\iota \colon U'' \lra \Hom_R^{}(\Z, \zeta^\nu \Z) \quad \text{given by} \quad \iota(u)(z) = u(z) - z.
\]
Given $u_1^{}$, $u_2^{} \in U''$ and $z \in \Z$, we have the relations
\begin{align*}
(u_1^{} \circ u_2^{}) (z) & = \iota(u_1^{})(u_2^{}(z)) + u_2^{}(z) \\
& = \iota(u_1^{})(\iota(u_2^{})(z) + z) + \iota(u_2^{})(z) + z \\
& = \iota(u_1^{})(\iota(u_2^{})(z)) + \iota(u_1^{})(z) + \iota(u_2^{})(z) + z.
\end{align*}
By construction, $\iota(u_2^{})(z) \in \zeta^\nu \Z$, hence $\iota(u_1^{})(\iota(u_2^{})(z)) \in \zeta^{2 \nu} \Z = \{ 0 \}$.
Thus, for any $z \in \Z$,
\[
(u_1^{} \circ u_2^{})(z) = \iota(u_1^{})(z) + \iota(u_2^{})(z) + z,
\quad \text{forcing} \quad \iota(u_1^{} \circ u_2^{}) = \iota(u_1^{}) + \iota(u_2^{}).
\]
We deduce that $U''$ is isomorphic to an algebraic subgroup of $(\Hom_R^{}(\Z, \zeta^\nu \Z), + )$.
By Remark~\ref{hom_additive}, the latter is unipotent, forcing $U''$ to be unipotent, as well.
This concludes the proof of (i).
Part (ii) follows from the isomorphism $U \simeq \bar{U}$ of Remark~\ref{duality}.
\end{proof}

\begin{proposition}
\label{semidirect}
We adopt the above assumptions and notations.
We make the following claims:
\begin{enumerate}
\item[\emph{(i)}]
The morphism $\pi \colon \Aut_R^{}(\Z) \to P$ has a section, hence $\Aut_R^{}(\Z) \simeq U \rtimes P$.
Moreover, $P$ is a parabolic subgroup of $\GL(n, \CC)$.
\item[\emph{(ii)}]
The morphism $\bar{\pi} \colon \Aut_R^{}(\Z) \to \bar{P}$ has a section, hence $\Aut_R^{}(\Z) \simeq \bar{U} \rtimes \bar{P}$.
Moreover, $\bar{P}$ is a parabolic subgroup of $\GL(n, \CC)$.
\end{enumerate}
\end{proposition}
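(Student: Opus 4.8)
The plan is to prove (i) explicitly by exhibiting a section of $\pi$ and identifying $P$ as a stabilizer of a flag, and then to deduce (ii) from (i) via the duality of Remark~\ref{duality}. Recall the decomposition $\Z \simeq \bigoplus_{1 \le i \le n} \CC[\zeta]/(\zeta^{\nu_i})$ with $\nu_1 \ge \dots \ge \nu_n > 0$. Fixing this decomposition gives a distinguished basis of $\Z/\zeta\Z$, namely the images of the generators $e_i$ of the $i$-th summand; under $\Aut_R(\Z/\zeta\Z) \simeq \GL(n,\CC)$ this is the basis with respect to which we compute matrices.

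First I would describe $P$ concretely. An automorphism $g \in \Aut_R(\Z)$ sends $e_i$ to an element $g(e_i) = \sum_j a_{ij}(\zeta) e_j$ with $a_{ij}(\zeta) \in \CC[\zeta]/(\zeta^{\nu_j})$, subject to the constraint $\zeta^{\nu_i} g(e_i) = 0$, i.e. $\zeta^{\nu_i} a_{ij}(\zeta) \equiv 0 \pmod{\zeta^{\nu_j}}$, which is automatic when $\nu_i \ge \nu_j$ and forces $\zeta^{\nu_j - \nu_i} \mid a_{ij}(\zeta)$ when $\nu_i < \nu_j$. The induced map on $\Z/\zeta\Z$ is the constant-term matrix $(a_{ij}(0))$; by the divisibility constraint, $a_{ij}(0) = 0$ whenever $\nu_i < \nu_j$. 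Thus the image $P$ is contained in the subgroup of $\GL(n,\CC)$ of matrices that are block-upper-triangular for the partition of $\{1,\dots,n\}$ into the levels $\{i : \nu_i = \text{const}\}$ (ordered by decreasing $\nu$); equivalently, $P$ stabilizes the flag $\Ann_\Z(\zeta) \supset \zeta\Z \cap \Ann_\Z(\zeta) \supset \dots$ descended to $\Z/\zeta\Z$, wait — more simply, the flag of subspaces of $\Z/\zeta\Z$ spanned by the levels. Conversely, any such block-upper-triangular invertible matrix lifts: one can choose $a_{ij}(\zeta) = a_{ij}(0)$ for $\nu_i \ge \nu_j$ and $a_{ij}(\zeta) = 0$ otherwise, and check (using invertibility of the diagonal blocks together with nilpotence of the strictly-block-upper part) that the resulting $R$-linear endomorphism is an automorphism. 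This simultaneously shows $P$ equals the full parabolic stabilizing that flag — hence $P$ is parabolic in $\GL(n,\CC)$ — and furnishes a section of $\pi$. By Proposition~\ref{unipotent}(i), $U = \Ker \pi$ is unipotent, so the section gives $\Aut_R(\Z) \simeq U \rtimes P$, proving (i).

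For (ii), I would invoke the commutative diagram of Remark~\ref{duality}: the composite $\varphi_1 \circ \delta_1 \colon \Aut_R(\Z) \to \Aut_R(\Z)$ is an isomorphism carrying the morphism $\bar\pi \colon \Aut_R(\Z) \to \Aut_R(\Ann_\Z(\zeta))$ to $\pi \colon \Aut_R(\Z) \to \Aut_R(\Z/\zeta\Z)$, with $\varphi_1 \circ \delta_1(\bar U) = U$ and $\varphi_2 \circ \delta_2(\bar P) = P$. Transporting the section of $\pi$ constructed in (i) through these isomorphisms yields a section of $\bar\pi$, hence $\Aut_R(\Z) \simeq \bar U \rtimes \bar P$, and $\bar P \simeq P$ is parabolic in $\GL(n,\CC)$. (Here $\bar U$ is unipotent by Proposition~\ref{unipotent}(ii).)

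I expect the main obstacle to be the verification, in part (i), that the explicit lift of a block-upper-triangular matrix is genuinely an automorphism of $\Z$ as an $R$-module — one must confirm $R$-linearity against the relations $\zeta^{\nu_i} e_i = 0$ and invertibility, the latter because the constructed endomorphism is ``block-triangular'' with invertible diagonal blocks acting on the associated graded of the filtration by the $\zeta^{\nu_i}\Z$, so its reduction is invertible and the endomorphism itself is an isomorphism by a nilpotence argument. Everything else — the containment $P \subseteq$ parabolic, the identification of the flag, and the transport in part (ii) — is formal once the divisibility constraints on the $a_{ij}(\zeta)$ are spelled out.
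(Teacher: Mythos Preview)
Your proposal is correct and follows essentially the same route as the paper: describe $\Hom_R(\CC[\zeta]/(\zeta^{\nu_j}),\CC[\zeta]/(\zeta^{\nu_i}))$ explicitly, read off the block-triangular shape of the reduction modulo $\zeta$, lift a block-triangular invertible matrix back to $\Aut_R(\Z)$ by constant coefficients to obtain the section, and deduce (ii) from (i) via the duality of Remark~\ref{duality}. The one place the paper is tidier is the invertibility of the lift: rather than your filtration/nilpotence argument, it simply observes that the constructed endomorphism is surjective modulo $\zeta\Z$ and invokes Nakayama's lemma (with $\zeta$ in the Jacobson radical and $\Z$ of finite length), which dispatches the point in one line.
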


\begin{proof}
Take $g = (g_{ij}^{})_{1 \le i, j \le n}^{} \in \Aut_R^{}(\Z)$,
where $g_{ij}^{} \in \Hom_R^{}(\CC[\zeta] / (\zeta^{\nu_j}), \, \CC[\zeta] / (\zeta^{\nu_i}))$.
We have the relations
\[
g_{ij}^{}(1) =
\begin{cases}
a_{ij}^{} + \zeta b_{ij}^{} & \text{if $\nu_i^{} \le \nu_j^{}$}, \\
\zeta^{\nu_i - \nu_j} b_{ij}^{} & \text{if $\nu_i^{} > \nu_j^{}$},
\end{cases}
\qquad \text{where} \quad a_{ij}^{} \in \CC, \quad b_{ij}^{} \in \CC[\zeta] / (\zeta^{\nu_i}).
\]
Set $a_{ij}^{} = 0$ if $\nu_i^{} > \nu_j^{}$.
Note that $\pi(g) = (a_{ij}^{})_{1 \le i, j \le n}^{}$, so $P$ is contained in the parabolic subgroup
\[
P' = \{ a' = (a_{ij}')_{1 \le i, j \le n}^{} \mid a_{ij}' = 0 \ \text{if} \ \nu_i^{} > \nu_j^{} \} \le \GL(n, \CC).
\]
Given $a'$ as above, we construct $g' \in \End_R^{}(\Z)$ by setting $g_{ij}'(1) = a_{ij}'$.
By Nakayama's lemma, $g'$ is surjective, so $g' \in \Aut_R^{}(\Z)$.
This shows that $P = P'$.
The map $a' \mapsto g'$ is a section of $\pi$.
This concludes the proof of (i).
Part (ii) follows from (i) by duality, in view of Remark~\ref{duality}.
\end{proof}

\section{The topological Euler characteristic of the right and left fibers}
\label{topological}

\noindent
In the sequel we shall restrict our attention to moduli spaces of coherent systems on $X = \PP^1 \times \PP^1$
equipped with the polarization $\Osh(1, 1)$.
For a sheaf $\F$ on $X$ of dimension $1$ we have $\chern(\F) = (s, r)$ with integers $s$, $r \ge 0$ such that $\mult(\F) = r + s > 0$.
The expression $\Poly_{\F}^{}(x_1^{}, x_2^{}) = r x_1^{} + s x_2^{} + t$ will also be called the \emph{Hilbert polynomial} of $\F$.
Let $\M^\alpha((s, r), t)$ be the coarse moduli space of S-equivalence classes $\langle \Lambda \rangle$
of $\alpha$-semi-stable coherent systems $\Lambda = (\Gamma, \F)$ on $X$
having order $1$ and Hilbert polynomial $r x_1^{} + s x_2^{} + t$.
Let $\bar{r}$ be a positive integer.
We have a decomposition
\[
\M_X^\alpha(\bar{r}, t) = \bigsqcup_{\substack{r + s = \bar{r} \\ r, s \ge 0}} \M^\alpha((s, r), t)
\]
into closed subschemes, according to the first Chern class.
The entire discussion in section~\ref{right_left} remains valid if we replace $\M_X^\alpha(\bar{r}, t)$ with $\M^\alpha((s, r), t)$.
Thus, we can define a \emph{singular value} of $\alpha$ relative to the polynomial $r x_1^{} + s x_2^{} + t$.
The morphisms $\rho_{\alpha + \epsilon}^{}$ and $\rho_{\alpha - \epsilon}^{}$ in diagram~\eqref{wall-crossing}
are compatible with the above decomposition.
The canonical morphism
\begin{equation}
\label{gamma}
\bigsqcup \M^\alpha \big( (s', r'), \tfrac{(r' + s')(\alpha + t)}{r + s} - \alpha \big)^\st_\red
\times \M \big( (s - s', r - r'), \tfrac{(r + s - r' - s')(\alpha + t)}{r + s} \big)_\red^{}
\overset{\gamma}{\lra} \M^\alpha((s, r), t)^\pss
\end{equation}
induces a bijection between the sets of closed points.

Let $\pr_1^{}$, $\pr_2^{} \colon \PP^1 \times \PP^1 \to \PP^1$ be the projections onto the first and second component.
Let $p \in \PP^1$ be a closed point and let $\zeta$ be a local parameter of $\PP^1$ at $p$.
Consider the ring $R = \Osh_p^{} = \CC[\zeta]_{(\zeta)}^{}$.
Let $\mu$ be a positive integer and consider a partition of $\mu$ of the form
\[
N = \{ \nu_1^{} \ge \dots \ge \nu_n^{} > 0 \}
= \{ \nu_1^{} = \dots = \nu_{k_1}^{} > \nu_{k_1 + 1}^{} = \dots = \nu_{k_2}^{} > \dots > \nu_{k_{l - 1} + 1}^{} = \dots = \nu_{k_l}^{} > 0 \}.
\]
Let $d$ be an integer.
Consider the $R$-module ${\displaystyle \Z = \bigoplus_{1 \le i \le n} R / (\zeta^{\nu_i})}$
and the coherent $\Osh_{\PP^1 \times \PP^1}^{}$-module
\[
\Osh(d) \boxtimes \Z = \pr_1^* \Osh(d) \tensor \pr_2^* \Z.
\]
Let $\Lambda'$ be a coherent system on $\PP^1 \times \PP^1$.
Consider the linear maps
\[
\psi_\Rt^i \colon \Ext^1(\Lambda', \, \Osh(d) \boxtimes R / (\zeta^{\nu_i})) \lra \Ext^1(\Lambda', \, \Osh(d) \boxtimes R / (\zeta)) = V_\Rt^{}
\]
induced by the quotient morphisms $R / (\zeta^{\nu_i}) \to R / (\zeta)$.
The vector subspaces $\Img(\psi_\Rt^i) \subset V_\Rt^{}$ form a non-decreasing sequence because, for $i < j$,
$\psi_\Rt^i$ is the composite map
\[
\Ext^1(\Lambda', \, \Osh(d) \boxtimes R / (\zeta^{\nu_i})) \lra \Ext^1(\Lambda', \, \Osh(d) \boxtimes R / (\zeta^{\nu_j}))
\xrightarrow{\psi_\Rt^j} \Ext^1(\Lambda', \, \Osh(d) \boxtimes R / (\zeta)).
\]
For $1 \le j \le l$ denote $V_\Rt^j = \Img(\psi_\Rt^{k_j})$ and $v_\Rt^j = \dim_\CC^{} V_\Rt^j$.
Consider the flag variety of subspaces
\[
\FF(k_1^{}, \dots, k_l^{}; V_\Rt^{}) =
\{ (W_1^{}, \dots, W_l^{}) \mid W_j^{} \subset V_\Rt^{}, \ \dim_\CC^{} W_j^{} = k_j^{}, \ W_1^{} \subset W_2^{} \subset \dots \subset W_l^{} \}
\]
and the closed subvariety $\FF(k_1^{}, \dots, k_l^{}; v_\Rt^1, \dots, v_\Rt^l; V_\Rt^{})$
given by the additional condition that $W_j^{} \subset V_\Rt^j$ for $1 \le j \le l$.
The linear map
\[
\psi_\Rt^{} = \bigoplus_{1 \le i \le n} \psi_\Rt^i \colon \Ext^1(\Lambda', \, \Osh(d) \boxtimes \Z)
\lra \Ext^1(\Lambda', \, \Osh(d) \boxtimes \Z / \zeta \Z)
\]
is induced by the quotient morphism $\Z \to \Z / \zeta \Z$.
Notice that $\psi_\Rt^{}$ is $\Aut_R^{}(\Z)$-equivariant.
The open subsets
\[
\Ext^1(\Lambda', \, \Osh(d) \boxtimes \Z / \zeta \Z)_0^{}
= \{ (w_1^{}, \dots, w_n^{}) \mid w_i^{} \in V_\Rt^{} \ \text{are linearly independent} \}
\]
and
\[
\Ext^1(\Lambda', \, \Osh(d) \boxtimes \Z)_0^{} = \psi_\Rt^{-1} \Ext^1(\Lambda', \, \Osh(d) \boxtimes \Z / \zeta \Z)_0^{}
\]
are $\Aut_R^{}(\Z)$-invariant.
The above constructions will be used in the study of the right fibers.
For the study of the left fibers we consider the linear maps
\[
\psi_\Lt^i \colon \Ext^1(\Osh(d) \boxtimes R / (\zeta^{\nu_i}), \, \Lambda')
\lra \Ext^1(\Osh(d) \boxtimes R / (\zeta), \, \Lambda') = V_\Lt^{}
\]
induced by the inclusion morphisms $R / (\zeta) \xrightarrow{\cdot \zeta^{\nu_i - 1}} R / (\zeta^{\nu_i})$.
The vector subspaces $\Img(\psi_\Lt^i) \subset V_\Lt^{}$ form a non-decreasing sequence because, for $i < j$,
$\psi_\Lt^i$ is the composite map
\[
\Ext^1(\Osh(d) \boxtimes R / (\zeta^{\nu_i}), \, \Lambda') \lra \Ext^1(\Osh(d) \boxtimes R / (\zeta^{\nu_j}), \, \Lambda')
\xrightarrow{\psi_\Lt^j} \Ext^1(\Osh(d) \boxtimes R / (\zeta), \, \Lambda')
\]
induced by the inclusion morphisms of $R$-modules
\[
R / (\zeta) \xrightarrow{\cdot \zeta^{\nu_j - 1}} R / (\zeta^{\nu_j}) \xrightarrow{\cdot \zeta^{\nu_i - \nu_j}} R / (\zeta^{\nu_i}).
\]
For $1 \le j \le l$ denote $V_\Lt^j = \Img(\psi_\Lt^{k_j})$ and $v_\Lt^j = \dim_\CC^{} V_\Lt^j$.
Consider the flag variety of subspaces $\FF(k_1^{}, \dots, k_l^{}; V_\Lt^{})$
and the closed subvariety $\FF(k_1^{}, \dots, k_l^{}; v_\Lt^1, \dots, v_\Lt^l; V_\Lt^{})$
defined in the same way as for $V_\Rt^{}$.
The linear map
\[
\psi_\Lt^{} \colon \Ext^1(\Osh(d) \boxtimes \Z, \, \Lambda') \lra \Ext^1(\Osh(d) \boxtimes \Ann_\Z^{}(\zeta), \, \Lambda')
\]
induced by the inclusion morphism $\Ann_\Z^{}(\zeta) \to \Z$ can be identified with ${\displaystyle \oplus_{1 \le i \le n} \psi_\Lt^i}$,
once we have identified $\Ann_{R / (\zeta^{\nu_i})}(\zeta)$ with the image of the morphism
$R / (\zeta) \xrightarrow{\cdot \zeta^{\nu_i - 1}} R / (\zeta^{\nu_i})$.
As before, $\psi_\Lt^{}$ is $\Aut_R^{}(\Z)$-equivariant.
The open subsets
\[
\Ext^1(\Osh(d) \boxtimes \Ann_\Z^{}(\zeta), \, \Lambda')_0^{}
= \{ (w_1^{}, \dots, w_n^{}) \mid w_i^{} \in V_\Lt^{} \ \text{are linearly independent} \}
\]
and
\[
\Ext^1(\Osh(d) \boxtimes \Z, \, \Lambda')_0^{} = \psi_\Lt^{-1} \Ext^1(\Osh(d) \boxtimes \Ann_\Z^{}(\zeta), \, \Lambda')_0^{}
\]
are $\Aut_R^{}(\Z)$-invariant.
We fix an isomorphism $\Aut_R^{}(\Z) \simeq U \rtimes P$ as in Proposition~\ref{semidirect}(i).
We identify $P$ with the subgroup $\{ 1 \} \times P$.
We restrict the action of $\Aut_R^{}(\Z)$ on $\Ext^1(\Lambda', \, \Osh(d) \boxtimes \Z)$ to an action of $P$.
We fix an isomorphism $\Aut_R^{}(\Z) \simeq \bar{U} \rtimes \bar{P}$ as in Proposition~\ref{semidirect}(ii).
We identify $\bar{P}$ with the subgroup $\{ 1 \} \times \bar{P}$.
We restrict the action of $\Aut_R^{}(\Z)$ on $\Ext^1(\Osh(d) \boxtimes \Z, \, \Lambda')$ to an action of $\bar{P}$.
Given positive integers $k$ and $v$, we denote by $\GG(k, v)$ the Grassmann variety of $k$-dimensional subspaces of $\CC^v$.

\begin{proposition}
\label{modulo_P}
Relative to the above restricted actions we can construct the following geometric quotients:
\begin{enumerate}
\item[\emph{(i)}]
$\Ext^1(\Lambda', \, \Osh(d) \boxtimes \Z)_0^{} / P$ as an affine algebraic bundle
with base $\FF(k_1^{}, \dots, k_l^{}; v_\Rt^1, \dots, v_\Rt^l; V_\Rt^{})$ and  fiber $\Ker(\psi_\Rt^{})$.
This quotient has topological Euler characteristic
\[
\eulertop(\Ext^1(\Lambda', \, \Osh(d) \boxtimes \Z)_0^{} / P)
= \prod_{1 \le j \le l} \eulertop(\GG(k_j^{} - k_{j - 1}^{}, \, v_\Rt^j - k_{j - 1}^{}));
\]
\item[\emph{(ii)}]
$\Ext^1(\Osh(d) \boxtimes \Z, \, \Lambda')_0^{} / \bar{P}$ as an affine algebraic bundle
with base $\FF(k_1^{}, \dots, k_l^{}; v_\Lt^1, \dots, v_\Lt^l; V_\Lt^{})$ and fiber $\Ker(\psi_\Lt^{})$.
This quotient has topological Euler characteristic
\[
\eulertop(\Ext^1(\Osh(d) \boxtimes \Z, \, \Lambda')_0^{} / P)
= \prod_{1 \le j \le l} \eulertop(\GG(k_j^{} - k_{j - 1}^{}, \, v_\Lt^j - k_{j - 1}^{})).
\]
\end{enumerate}
\end{proposition}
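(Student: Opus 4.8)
I would prove (i) in detail and deduce (ii) from it by the duality of Remark~\ref{duality} and Proposition~\ref{semidirect}(ii). Write $j(i)$ for the block index of $i$, i.e.\ $k_{j(i)-1} < i \le k_{j(i)}$, so that $\Img(\psi_\Rt^i) = V_\Rt^{j(i)}$ and $V_\Rt^1 \subseteq \dots \subseteq V_\Rt^l$. The first step is to recognise the target of $\psi_\Rt$: since $\Z / \zeta \Z \cong (R/(\zeta))^{\oplus n}$ as $R$-modules, there is a canonical identification $\Ext^1(\Lambda', \Osh(d) \boxtimes \Z / \zeta \Z) = V_\Rt^{\oplus n}$ carrying $\psi_\Rt$ to $(\psi_\Rt^1, \dots, \psi_\Rt^n)$ and the open set $\Ext^1(\Lambda', \Osh(d) \boxtimes \Z / \zeta \Z)_0$ to the locus of linearly independent tuples. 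Under this identification the image of $\psi_\Rt$ is the subspace $\bigoplus_i V_\Rt^{j(i)}$, and $\psi_\Rt$ restricts to a surjection of $\Ext^1(\Lambda', \Osh(d) \boxtimes \Z)_0$ onto the open subvariety $Y \subset \bigoplus_i V_\Rt^{j(i)}$ of linearly independent tuples. Being the restriction of a surjective linear map to a $\psi_\Rt$-saturated open set, this surjection is a $P$-equivariant trivial affine bundle with fibre $\Ker(\psi_\Rt)$; a linear splitting of $\psi_\Rt$ furnishes the trivialisation.

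Next I would show that $Y$ is a principal $P$-bundle over the flag variety. Send $(w_1, \dots, w_n) \mapsto (W_1, \dots, W_l)$ with $W_m = \thespan(w_1, \dots, w_{k_m})$; since $w_i \in V_\Rt^{j(i)} \subseteq V_\Rt^m$ for $i \le k_m$ and the $w_i$ are independent, this lands in $\FF(k_1, \dots, k_l; v_\Rt^1, \dots, v_\Rt^l; V_\Rt)$, and it is visibly surjective (build an adapted basis block by block). Here I invoke the explicit form of $P$ from Proposition~\ref{semidirect}(i): $P = P'$ is the subgroup of $\GL(n, \CC)$ of matrices $g = (g_{ij})$ with $g_{ij} = 0$ whenever $\nu_i > \nu_j$, i.e.\ the block triangular subgroup whose nonzero entries occur only in positions with row-block index $\ge$ column-block index. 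From this one checks directly that the $P$-action on $Y$ preserves the map to the flag variety and that $P$ acts simply transitively on each of its fibres: two bases adapted to the same flag differ by an invertible block triangular change of coordinates, which is exactly an element of $P$, uniquely determined because the tuple is a basis of $W_l$. Hence the fibres of the map are precisely the $P$-orbits; and since a parabolic subgroup of $\GL(n, \CC)$ is special --- its Levi factor is a product of general linear groups and its unipotent radical is unipotent, both special --- the map $Y \to \FF(k_1, \dots, k_l; v_\Rt^1, \dots, v_\Rt^l; V_\Rt)$ is a Zariski-locally trivial principal $P$-bundle, in particular a geometric quotient.

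Combining the two steps produces the quotient. Over a Zariski-open $V$ of the flag variety on which the principal bundle is trivial, with section $\sigma \colon V \to Y$, the preimage under $\psi_\Rt$ of the part of $Y$ lying over $V$ is $P \cdot \psi_\Rt^{-1}(\sigma(V))$, while $\psi_\Rt^{-1}(\sigma(V)) \cong V \times \Ker(\psi_\Rt)$ is a slice for the $P$-action; so the local quotient is $\bigl( V \times P \times \Ker(\psi_\Rt) \bigr) / P \cong V \times \Ker(\psi_\Rt)$. These patches glue to the geometric quotient $\Ext^1(\Lambda', \Osh(d) \boxtimes \Z)_0 / P$, realised as an affine algebraic bundle over $\FF(k_1, \dots, k_l; v_\Rt^1, \dots, v_\Rt^l; V_\Rt)$ with fibre $\Ker(\psi_\Rt)$. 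For the Euler characteristic I would use that $\eulertop$ is multiplicative along Zariski-locally trivial fibrations and equals $1$ on affine spaces, reducing the claim to computing $\eulertop$ of the flag variety. That I would evaluate by projecting a flag to its first term $W_1 \in \GG(k_1, v_\Rt^1)$ and identifying the fibre, after passing to quotients by $W_1$, with a flag variety of the same shape $\FF(k_2 - k_1, \dots, k_l - k_1;\, v_\Rt^2 - k_1, \dots, v_\Rt^l - k_1;\, V_\Rt / W_1)$; the projection is Zariski-locally trivial, so an induction on $l$ (with the convention $k_0 = 0$) yields $\prod_{1 \le j \le l} \eulertop(\GG(k_j - k_{j-1}, v_\Rt^j - k_{j-1}))$.

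Part (ii) is the mirror image: replace $\Z / \zeta \Z$ by $\Ann_\Z(\zeta)$, the quotient morphisms $R/(\zeta^{\nu_i}) \to R/(\zeta)$ by the inclusions $R/(\zeta) \xrightarrow{\cdot \zeta^{\nu_i - 1}} R/(\zeta^{\nu_i})$, $\psi_\Rt$ by $\psi_\Lt$, the spaces $V_\Rt^j$ by $V_\Lt^j$, and $P$ by the parabolic $\bar{P}$ of Proposition~\ref{semidirect}(ii); by Remark~\ref{duality} this is literally a ``right'' situation for the pair $(\Lambda', \Z^*)$ read through $\Hom_\CC(-, \CC)$, so the four steps above apply verbatim. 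The step I expect to require the most care is the construction of the geometric quotient in the third paragraph: one must make sure that the non-reductivity of $P$ causes no obstruction, and the key point is precisely that the $P$-action on $\Ext^1(\Lambda', \Osh(d) \boxtimes \Z)_0$ is free and is pulled back, via $\psi_\Rt$, from the principal $P$-bundle structure on the space of adapted tuples, so that no stability conditions enter; everything else is bookkeeping with flag varieties and with the explicit description of $P$ obtained in Section~\ref{automorphisms}.
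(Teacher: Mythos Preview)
Your proposal is correct and follows essentially the same architecture as the paper's proof: identify $\Img(\psi_\Rt)\cap(\text{independent tuples})$ with the open set whose $P$-quotient is the constrained flag variety $\FF(k_1,\dots,k_l;v_\Rt^1,\dots,v_\Rt^l;V_\Rt)$, observe that $\psi_\Rt$ exhibits $\Ext^1(\Lambda',\Osh(d)\boxtimes\Z)_0$ as an affine bundle with fibre $\Ker(\psi_\Rt)$ over that open set, and descend this affine bundle to the quotient.  The only substantive difference is in how the descent is executed.  The paper packages the last step by invoking the descent criterion \cite[Theorem~4.2.15]{huybrechts_lehn}: since $P$ acts freely on $E_{20}$, the stabilizers act trivially on the fibres of $\psi_\Rt$, and the affine bundle descends.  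You instead argue directly, using Zariski-local sections of the principal $P$-bundle $Y\to\FF(\cdots)$ (guaranteed by the specialness of parabolics, or more concretely by adapted bases) to produce slices $\psi_\Rt^{-1}(\sigma(V))$ and glue the resulting local quotients.  These are two phrasings of the same idea; your version is slightly more hands-on and makes explicit why no GIT stability issues arise, while the paper's citation is more compact.  Your inductive computation of $\eulertop$ of the flag variety and the duality reduction for (ii) match the paper's one-line remarks at the end of its proof.
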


\begin{proof}
To simplify notations, we write
\[
E_1^{} = \Ker(\psi_\Rt^{}), \quad E = \Ext^1(\Lambda', \, \Osh(d) \boxtimes \Z), \quad E_2^{} = \Img(\psi_\Rt^{}), \quad
F = \Ext^1(\Lambda', \, \Osh(d) \boxtimes \Z / \zeta \Z).
\]
It is well-known that the geometric quotient $F_0^{} / P$ exists and is isomorphic to $\FF(k_1^{}, \dots, k_l^{}; V_\Rt^{})$.
The quotient map is given by
\[
(w_1^{}, \dots, w_n^{}) \longmapsto (W_1^{}, \dots, W_l^{}), \quad \text{where} \quad
W_j^{} = \thespan \{ w_i^{} \mid 1 \le i \le k_j^{} \}.
\]
By construction, $E_{20}^{} = E_2^{} \cap F_0^{}$ is a closed and $P$-invariant subvariety of $F_0^{}$,
hence $E_{20}^{} / P$ exists and is the image of $E_{20}^{}$ in $F_0^{} / P$.
Explicitly, we have
\[
E_{20}^{} = \{ (w_1^{}, \dots, w_n^{}) \in F_0^{} \mid w_i^{} \in V_\Rt^j \ \text{if} \ i \le k_j^{} \},
\]
hence
\[
E_{20}^{} / P \simeq \FF(k_1^{}, \dots, k_l^{}; v_\Rt^1, \dots, v_\Rt^l; V_\Rt^{}).
\]
We fix an isomorphism $E \simeq E_1^{} \oplus E_2^{}$ such that $\psi_\Rt^{} = \pr_2^{}$.
Note that $\psi_\Rt^{}$ is $P$-equivariant, because it is $\Aut_R^{}(\Z)$-equivariant.
For $g \in P$, $e_1^{} \in E_1^{}$ and $e_2^{} \in E_2^{}$ we have the relations
\begin{align*}
\psi_\Rt^{}(g. (e_1^{}, 0)) & = g. \psi_\Rt^{}(e_1^{}, 0) = g. 0 = 0, \\
\psi_\Rt^{}(g. (0, e_2^{})) & = g. \psi_\Rt^{}(0, e_2^{}) = g. e_2^{},
\end{align*}
hence $g. (0, e_2^{}) = (\varrho(g, e_2^{}), g. e_2^{})$ for some algebraic map $\varrho \colon P \times E_2^{} \to E_1^{}$.
From the relations
\[
g. (e_1^{}, e_2^{}) = g. (e_1^{}, 0) + g. (0, e_2^{}) = (g. e_1^{}, 0) + (\varrho(g, e_2^{}), g. e_2^{}) = (g. e_1^{} + \varrho(g, e_2^{}), g. e_2^{})
\]
we see that the morphism $\psi_\Rt^{-1}(e_2^{}) \to \psi_\Rt^{-1}(g. e_2^{})$, $e \mapsto g. e$, is an affine automorphism of $E_1^{}$.
The action of $P$ on $E$ is, therefore, compatible with the structure of an affine algebraic bundle of $E$ with base $E_2^{}$ and fiber $E_1^{}$.
We can now apply the descent result \cite[Theorem 4.2.15]{huybrechts_lehn} to the affine algebraic bundle $E_0^{} |_{E_{20}}^{}$
in order to deduce that $E_0^{}$ descends to an affine algebraic bundle $Q_\Rt^{}$ with base $E_{20}^{} / P$ and fiber $E_1^{}$.
The key prerequisite condition for the descent result to work is that
$\Stab_P^{}(e_2^{})$ act trivially on $\psi_\Rt^{-1}(e_2^{})$, for every $e_2^{} \in E_{20}^{}$.
This condition is satisfied because the action of $P$ on $E_{20}^{}$ is free, being embedded in a free action of $\GL(n, \CC)$ on $F_0^{}$.
It is clear that $Q_\Rt^{}$ is the geometric quotient of $E_0^{}$ modulo $P$.
This concludes the proof of (i).
Analogously, the geometric quotient of $\Ext^1(\Osh(d) \boxtimes \Z, \, \Lambda')_0^{} / \bar{P}$
can be constructed as an affine algebraic bundle with fiber $\Ker(\psi_\Lt^{})$ and base
\[
\Img(\psi_\Lt^{})_0^{} / \bar{P} \simeq \FF(k_1^{}, \dots, k_l^{}; v_\Lt^1, \dots, v_\Lt^l; V_\Lt^{}).
\]
The latter occurs as a closed subvariety of
\[
\Ext^1(\Osh(d) \boxtimes \Ann_\Z^{}(\zeta), \, \Lambda')_0^{} / \bar{P} \simeq \FF(k_1^{}, \dots, k_l^{}; V_\Lt^{}).
\]
The formula for the Euler characteristic follows from the fact that $\FF(k_1^{}, \dots, k_l^{}; v_\Rt^1, \dots, v_\Rt^l; V_\Rt^{})$
is a tower of bundles with base $\GG(k_1^{}, v_\Rt^1)$
and fiber $\GG(k_j^{} - k_{j - 1}^{}, \, v_\Rt^j - k_{j - 1}^{})$ on the $j$-th floor.
\end{proof}

\begin{remark}
\label{local_sections}
Adopting the notations from the proof of Proposition~\ref{modulo_P}, we consider the Cartesian diagram
\[
\xymatrix
{
\Ext^1(\Lambda', \, \Osh(d) \boxtimes \Z)_0^{} \ar[r]^-{\tilde{\varphi}} \ar[d]_-{\psi_\Rt^{}}
& \Ext^1(\Lambda', \, \Osh(d) \boxtimes \Z)_0^{} / P = Q_\Rt^{} \ar[d] \\
E_{20}^{} \ar[r]^-{\varphi} & E_{20}^{} / P
}.
\]
in which $\varphi$ and $\tilde{\varphi}$ are the geometric quotient maps.
It is known that the quotient map $F_0^{} \to F_0^{} / P$ admits local sections, hence $\varphi$ admits local sections, as well.
It follows that $E_{20}^{} / P$ admits a finite decomposition $\Bf$
into locally closed reduced subschemes satisfying the following property:
for each $B \in \Bf$ there is a morphism of varieties $\xi_B^{} \colon B \to E_{20}^{}$
such that $\varphi \circ \xi_B^{}$ is the inclusion map of $B$.
Consider the cartesian diagram
\[
\xymatrix
{
Q_B^{} = Q_\Rt^{} |_B^{} \ar[r]^-{\tilde{\xi}_B^{}} \ar[d]_-{\psi_B^{}} & \Ext^1(\Lambda', \, \Osh(d) \boxtimes \Z)_0^{} \ar[d]^-{\psi_\Rt^{}} \\
B \ar[r]^-{\xi_B^{}} & E_{20}^{}
}.
\]
Notice that $\{ Q_B^{} \}_{B \in \Bf}^{}$ is a finite decomposition of $Q_\Rt^{}$ into locally closed reduced subschemes.
Recall the morphism $\pi$ from Proposition~\ref{semidirect}(i).
By construction, $U = \Ker(\pi)$ acts trivially on $E_{20}^{}$,
hence there is an induced algebraic action of $U$ on $Q_B^{}$ with respect to which $\tilde{\xi}_B^{}$ is equivariant.

We claim that two closed points $q$ and $q' \in Q_B^{}$ lie in the same $U$-orbit precisely if
$\tilde{\xi}_B^{}(q)$ and $\tilde{\xi}_B^{}(q')$ lie in the same $\Aut_R^{}(\Z)$-orbit.
Indeed, assume that $\tilde{\xi}_B^{}(q') = u g. \tilde{\xi}_B^{}(q)$ for some $u \in U$ and $g \in P$.
Then we have the relations
\begin{align*}
\xi_B^{}(\psi_B^{}(q')) & = \psi_\Rt^{}(\tilde{\xi}_B^{}(q')) = \psi_\Rt^{}(u g. \tilde{\xi}_B^{}(q)) = \pi(u g). \psi_\Rt^{}(\tilde{\xi}_B^{}(q)) \\
& = \pi(u) \pi(g). \psi_\Rt^{}(\tilde{\xi}_B^{}(q)) = \pi(g). \psi_\Rt^{}(\tilde{\xi}_B^{}(q)) = \pi(g). \xi_B^{}(\psi_B^{}(q)).
\end{align*}
We obtain the relations
\[
\psi_B^{}(q') = \varphi \circ \xi_B^{}(\psi_B^{}(q')) = \varphi(\pi(g). \xi_B^{}(\psi_B^{}(q))) = \varphi \circ \xi_B^{}(\psi_B^{}(q)) = \psi_B^{}(q).
\]
This shows that $\pi(g) \in \Stab_P^{}(\xi_B^{}(\psi_B^{}(q)))$.
We mentioned in the proof of Proposition~\ref{modulo_P} that all isotropy groups for the action of $P$ on $E_{20}^{}$ are trivial.
Thus $\pi(g) = 1$, forcing $g = 1$.
In conclusion, $\tilde{\xi}_B^{}(q') = u. \tilde{\xi}_B^{}(q) = \tilde{\xi}_B^{}(u. q)$ forcing $q' = u. q$, because $\tilde{\xi}_B^{}$ is injective.
\end{remark}

\begin{notation}
\label{Xi}
Let $\mu$ be  a positive integer.
We denote by $\Part(\mu)$ the set of partitions of $\mu$.
Consider a partition of $N$ of $\mu$ of the form
\[
N = \{ \nu_1^{} = \dots = \nu_{k_1}^{} > \nu_{k_1 + 1}^{} = \dots = \nu_{k_2}^{} > \dots > \nu_{k_{l - 1} + 1}^{} = \dots = \nu_{k_l}^{} > 0 \}.
\]
Let $\Lambda'$ be a coherent system on $\PP^1 \times \PP^1$.
Let $p \in \PP^1$ be a closed point and consider the line $L = \PP^1 \times \{ p \} \subset \PP^1 \times \PP^1$.
Let $d$ be an integer.
The expressions occurring on the r.h.s.\ in Proposition~\ref{modulo_P} depend only on $\Lambda'$, $L$, $d$ and $N$,
so we may define
\begin{alignat*}{3}
& \Xi_\Rt^{}(\Lambda', L, d, N) && = \prod_{1 \le j \le l} \eulertop(\GG(k_j^{} - k_{j - 1}^{}, \, v_\Rt^j - k_{j - 1}^{}))
&& = \prod_{1 \le j \le l} \binom{v_\Rt^j - k_{j - 1}^{}}{k_j^{} - k_{j - 1}^{}}, \\
& \Xi_\Lt^{}(\Lambda', L, d, N) && = \prod_{1 \le j \le l} \eulertop(\GG(k_j^{} - k_{j - 1}^{}, \, v_\Lt^j - k_{j - 1}^{}))
&& = \prod_{1 \le j \le l} \binom{v_\Lt^j - k_{j - 1}^{}}{k_j^{} - k_{j - 1}^{}}. \\
\intertext{Given a positive integer $v$, we write}
& \Xi(v, N) && = \prod_{1 \le j \le l} \eulertop(\GG(k_j^{} - k_{j - 1}^{}, \, v - k_{j - 1}^{}))
&& = \prod_{1 \le j \le l} \binom{v - k_{j - 1}^{}}{k_j^{} - k_{j - 1}^{}} \\
& && = \binom{v}{k_1^{}, \, k_2^{} - k_1^{}, \, \dots, \, k_l^{} - k_{l - 1}^{}, \, v - k_l^{}}.
\end{alignat*}
\end{notation}

\begin{proposition}{\cite[Proposition 10]{ballico_huh}}
\label{vartheta}
Let $\mu > 0$ and $d$ be integers.
Then there is an isomorphism
\[
\vartheta \colon \left| \Osh(0, \mu) \right| \lra \M((0, \mu), d \mu) \quad \text{given by} \quad
\vartheta \Big(\sum_{1 \le i \le m} \mu_i^{} L_i^{}\Big) = \Bigl< \bigoplus_{1 \le i \le m} \mu_i^{} \Osh_{L_i}^{}(d - 1, 0) \Bigr>.
\]
Here $L_i^{}$ are distinct lines in $\PP^1 \times \PP^1$ of degree $(0, 1)$
and $\{ \mu_1^{}, \dots, \mu_m^{} \}$ is a partition of $\mu$.
\end{proposition}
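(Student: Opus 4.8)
The plan is to identify $\M((0,\mu),d\mu)$, as a scheme, with the linear system $|\Osh(0,\mu)|$, by first matching closed points and then comparing infinitesimal structure. The key preliminary fact is that if $\F$ is a pure sheaf on $\PP^1\times\PP^1$ with $\chern(\F)=(0,k)$, then every irreducible component of its support is a line of bidegree $(0,1)$, i.e.\ a fibre of $\pr_2$: an effective curve class $(0,k)$ is a sum of such lines, and an irreducible curve dominating the second factor meets a general $\pr_2$-fibre nontrivially. Hence $\F$ is supported on finitely many such (pairwise disjoint) lines and splits accordingly, and a \emph{stable} sheaf of multiplicity one with reduced Hilbert polynomial $x+d$ is a line bundle of Euler characteristic $d$ on one such line $L\simeq\PP^1$, namely $\Osh_L(d-1,0)$.

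The heart of the argument is that there is no stable sheaf of multiplicity $\ge 2$ with reduced Hilbert polynomial $x+d$. Such a $\G$, being simple hence indecomposable, is supported on a thickening $nL$ of a single line $L$ with $n\ge 2$ (for $n=1$ it would be a direct sum of line bundles on $L$), so it is a module over $\Osh_L\tensor_\CC R$, $R=\CC[\zeta]_{(\zeta)}$, with $\zeta$ (a local equation of $L$) acting nilpotently. Put $\G'=\zeta^{\,n-1}\G\subsetneq\G$, which is killed by $\zeta$ and pure, so $\G'\simeq\bigoplus_\alpha\Osh_L(a_\alpha,0)$; let $\mathcal{Q}$ be the maximal torsion-free quotient of $\G/\zeta\G$, a \emph{proper} quotient of $\G$ since $\zeta\G\ne 0$, so $\mathcal{Q}\simeq\bigoplus_\beta\Osh_L(c_\beta,0)$. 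The endomorphism $\zeta^{\,n-1}$ of $\G$ has image $\G'$ and kills $\zeta\G$, and, $\G'$ being torsion-free, it factors as $\G\twoheadrightarrow\mathcal{Q}\to\G'$ through a nonzero map. Stability, applied to the subsheaf $\Osh_L(\max_\alpha a_\alpha,0)\subset\G$ and to the quotient $\G\twoheadrightarrow\Osh_L(\max_\beta c_\beta,0)$, gives $\max_\alpha a_\alpha\le d-2$ and $\max_\beta c_\beta\ge d$, whereas a nonzero map $\mathcal{Q}\to\G'$ forces $\Hom_{\PP^1}(\Osh(c_\beta),\Osh(a_\alpha))\ne 0$ for some $\alpha,\beta$, i.e.\ $c_\beta\le a_\alpha$ — a contradiction. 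Consequently the Jordan--H\"older factors of any semi-stable $\F$ with $\chern(\F)=(0,\mu)$ and $\euler(\F)=d\mu$ are all of the form $\Osh_L(d-1,0)$, so $\langle\F\rangle=\langle\bigoplus_i\mu_i\Osh_{L_i}(d-1,0)\rangle$ for a unique divisor $\sum_i\mu_i L_i\in|\Osh(0,\mu)|$.

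To see that $\vartheta$ is a morphism, take the universal divisor $\mathcal{D}\subset X\times|\Osh(0,\mu)|$, flat over the base, and twist $\Osh_{\mathcal{D}}$ by $\pr_1^*\Osh(d-1)$: this is a flat family of sheaves over $|\Osh(0,\mu)|$ whose member over $D=\sum_i\mu_i L_i$ is $\Osh_D(d-1,0)=\bigoplus_i\Osh_{\mu_i L_i}(d-1,0)$. Each $\Osh_{\mu L}(d-1,0)$ is semi-stable with Jordan--H\"older factors $\mu$ copies of $\Osh_L(d-1,0)$, by induction on the thickening via $0\to\Osh_L(d-1,0)\to\Osh_{\mu L}(d-1,0)\to\Osh_{(\mu-1)L}(d-1,0)\to 0$, the graded pieces of the thickening being trivial on $L$ as they are twists pulled back from the second factor. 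Hence the family is fibrewise semi-stable and induces, by the coarse moduli property of $\M((0,\mu),d\mu)$, a morphism $\vartheta$ given on closed points by the displayed formula. It is surjective by the previous paragraph and injective because the multiset of Jordan--H\"older factors — hence the divisor — is read off from $\langle\F\rangle$.

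It remains to upgrade the bijection $\vartheta$ to an isomorphism, which is the delicate part: since $\gcd(\mu,d\mu)=\mu$ may exceed $1$, the moduli space is properly semi-stable at every point, so one cannot invoke smoothness of a stable locus. Instead I would compute the local structure at a polystable point $\F=\bigoplus_i\mu_i\Osh_{L_i}(d-1,0)$. The cross $\Ext$'s vanish since the $L_i$ are disjoint, while $\Ext^1_X(\Osh_L,\Osh_L)\simeq\H^0(\mathcal{N}_{L/X})\simeq\CC$ and $\Ext^2_X(\Osh_L,\Osh_L)=0$ because $\mathcal{N}_{L/X}\simeq\Osh_{\PP^1}$; thus $\Ext^2(\F,\F)=0$, the local deformation space is smooth and equals $\Ext^1(\F,\F)\simeq\bigoplus_i\mathfrak{gl}(\mu_i)$, and the standard description of the moduli space near a polystable point \cite{huybrechts_lehn} makes $\M((0,\mu),d\mu)$ \'etale-locally isomorphic to $\bigoplus_i\mathfrak{gl}(\mu_i)/\!\!/\prod_i\GL(\mu_i)\simeq\prod_i\bigl(\mathfrak{gl}(\mu_i)/\!\!/\GL(\mu_i)\bigr)\simeq\CC^{\,\mu}$, using that the adjoint quotient $\mathfrak{gl}(n)/\!\!/\GL(n)$ is an affine $n$-space. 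Hence $\M((0,\mu),d\mu)$ is smooth of dimension $\mu=\dim|\Osh(0,\mu)|$, so the bijective morphism $\vartheta$ between smooth projective varieties of equal dimension over $\CC$ — being finite and birational onto a normal target — is an isomorphism. The main obstacle is precisely this last computation; the crux in the second paragraph is conceptually central but elementary once the $\zeta$-filtration is introduced.
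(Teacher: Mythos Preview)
The paper does not give its own proof of this proposition; it is quoted from \cite[Proposition 10]{ballico_huh} and used as a black box, so there is nothing in the paper to compare your argument against.

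Your argument is essentially correct and nicely self-contained, but there is one small slip in the second paragraph. You apply stability only to the quotient $\G\twoheadrightarrow\Osh_L(\max_\beta c_\beta,0)$ and record $\max_\beta c_\beta\ge d$. That inequality alone does not yield a contradiction: to rule out a nonzero map $\mathcal Q\to\G'$ you need $c_\beta>a_\alpha$ for \emph{every} pair $(\alpha,\beta)$. The fix is immediate: each summand $\Osh_L(c_\beta,0)$ of $\mathcal Q$ is itself a proper quotient of $\G$ (it has multiplicity $1$ while $\G$ has multiplicity $\ge 2$), so stability gives $c_\beta\ge d$ for all $\beta$, whence $c_\beta\ge d>d-2\ge a_\alpha$ for all $\alpha,\beta$. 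You should also say why $\mathcal Q\neq 0$: at the generic point $\eta$ of $L$ the stalk $\G_\eta$ is a nonzero finite module over the DVR $\Osh_{X,\eta}$ with uniformiser $\zeta$, so $(\G/\zeta\G)_\eta\neq 0$ by Nakayama, hence $\G/\zeta\G$ has positive rank on $L$.

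The rest of your proof is sound. The identification of the \'etale local model of $\M((0,\mu),d\mu)$ at the polystable point with $\bigoplus_i\mathfrak{gl}(\mu_i)/\!\!/\prod_i\GL(\mu_i)\simeq\CC^{\mu}$ is correct (note that the diagonal $\CC^*\subset\Aut(\F)$ acts trivially by conjugation, so it does not matter whether you quotient by $\Aut(\F)$ or $\Aut(\F)/\CC^*$), and the final step---a bijective morphism from a projective variety onto a smooth variety of the same dimension is finite, hence an isomorphism by Zariski's Main Theorem---is exactly the right way to finish.
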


\noindent
The following lemma about the topological Euler characteristic is well-known,
cf.\ \cite[Section 2.1]{gulbrandsen}.

\begin{lemma}
\label{multiplicative}
Let $f \colon S \to T$ be a morphism of schemes of finite type over $\CC$.
Assume that there is $c \in \ZZ$ such that $\eulertop(f^{-1}(\tau)) = c$ for every closed point $\tau \in T$.
Then $\eulertop(S) = c \eulertop(T)$.
\end{lemma}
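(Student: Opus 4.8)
The plan is to deduce the statement from two standard properties of the topological Euler characteristic of complex schemes of finite type: additivity under stratification, and multiplicativity along locally trivial fibrations.

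First recall that for a scheme $S$ of finite type over $\CC$ the invariant $\eulertop(S)$ agrees with the Euler characteristic with compact supports, and is therefore \emph{additive}: whenever $S = S_1 \sqcup \dots \sqcup S_N$ is a finite partition into locally closed subschemes, $\eulertop(S) = \sum_i \eulertop(S_i)$. The strategy is then to partition $T$ into finitely many pieces over which $f$ is well understood, compute $\eulertop$ of the corresponding pieces of $S$, and add up.

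The essential input is the existence of a finite partition $T = T_1 \sqcup \dots \sqcup T_m$ into connected locally closed subvarieties such that, for each $j$, the restricted morphism $f \colon f^{-1}(T_j) \to T_j$ is a locally trivial fibration in the analytic topology. This is provided by the theory of algebraic Whitney stratifications together with Thom's first isotopy lemma (cf.\ Verdier, or Goresky--MacPherson). Alternatively, and more economically for the present purpose, one may argue by Noetherian induction on $T$: using additivity it suffices to find, for $T$ irreducible, a dense open $V \subseteq T$ with $\eulertop(f^{-1}(V)) = c\,\eulertop(V)$, and such a $V$ exists because the higher direct images $R^q f_! \underline{\mathbb Q}$ are constructible and hence locally constant on a dense open, which is all that is needed below.

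Now fix $j$ and let $F_j$ be the fiber of $f$ over some closed point $\tau \in T_j$; since $T_j$ is connected this is independent of $\tau$ up to homeomorphism, and by hypothesis $\eulertop(F_j) = c$. Multiplicativity of the Euler characteristic along the fibration $f^{-1}(T_j) \to T_j$ --- which follows from the Leray spectral sequence once one knows that for a local system $\mathcal L$ on a connected base $B$ one has $\eulertop(B, \mathcal L) = \rank(\mathcal L)\cdot \eulertop(B)$ --- yields $\eulertop(f^{-1}(T_j)) = \eulertop(T_j)\cdot \eulertop(F_j) = c\,\eulertop(T_j)$. Summing over $j$ and invoking additivity on $S$ and on $T$ gives
\[
\eulertop(S) \;=\; \sum_{j=1}^m \eulertop(f^{-1}(T_j)) \;=\; \sum_{j=1}^m c\,\eulertop(T_j) \;=\; c\sum_{j=1}^m \eulertop(T_j) \;=\; c\,\eulertop(T).
\]
The only genuinely nontrivial ingredient is the stratification making $f$ locally trivial (equivalently, the constructibility and generic local constancy of the sheaves $R^q f_!\underline{\mathbb Q}$); everything else is a formal manipulation of the additive and multiplicative properties of $\eulertop$, so that step is the main obstacle, while the remainder is routine.
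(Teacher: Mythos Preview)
Your proof is correct and follows the standard route. The paper itself does not give a proof of this lemma at all: it simply states the result as well-known and refers the reader to \cite[Section 2.1]{gulbrandsen}. So there is nothing to compare against beyond noting that your argument (stratify the base so that the map becomes locally trivial, invoke multiplicativity on each piece, then sum by additivity) is exactly the kind of argument one finds in the cited source and in the literature generally.
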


\begin{lemma}
\label{same_euler}
Let $U$ be a unipotent algebraic group over $\CC$.
Let $f \colon S \to T$ be a surjective morphism of algebraic varieties of finite type over $\CC$.
Assume that $S$ admits an algebraic action of $U$ such that,
for every $\tau \in T$, $f^{-1}(\tau)$ is a $U$-orbit.
Then $\eulertop(S) = \eulertop(T)$.
\end{lemma}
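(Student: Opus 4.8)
The plan is to reduce, by Lemma~\ref{multiplicative}, to computing the topological Euler characteristic of a single fibre, and then to prove the purely group‑theoretic statement that $\eulertop(U/H) = 1$ for every unipotent algebraic group $U$ over $\CC$ and every closed subgroup $H \subseteq U$, by induction on $\dim U$. Granting the latter, Lemma~\ref{multiplicative} applied to $f$ with the constant $c = 1$ gives $\eulertop(S) = \eulertop(T)$, which is the assertion.

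First I would fix a closed point $\tau \in T$. Since the topological Euler characteristic depends only on the underlying reduced scheme, $\eulertop(f^{-1}(\tau)) = \eulertop((f^{-1}(\tau))_\red^{})$. Being the fibre of a morphism of finite‑type $\CC$-schemes over a closed point, $f^{-1}(\tau)$ is closed in $S$; by hypothesis it is a single $U$-orbit $O_\tau^{} = U \cdot x$, so it is a \emph{closed} orbit, and $(f^{-1}(\tau))_\red^{}$ is exactly $O_\tau^{}$ equipped with its canonical (reduced, smooth) structure of closed subvariety of $S$. Because $\CC$ has characteristic zero, the orbit map is separable, so it induces an isomorphism of varieties $U/\Stab_U^{}(x) \simeq O_\tau^{}$. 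Thus $\eulertop(f^{-1}(\tau)) = \eulertop(U/\Stab_U^{}(x))$, and it suffices to prove $\eulertop(U/H) = 1$ for an arbitrary closed subgroup $H$ of a unipotent group.

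For the group‑theoretic claim I would induct on $\dim U$, the case $\dim U = 0$ (so $U = \{1\}$ in characteristic zero, $U/H$ a point) being trivial. If $\dim U > 0$, the centre of $U$ is a non‑trivial closed subgroup which is commutative and unipotent, hence a vector group; choose a subgroup $Z \subseteq Z(U)$ with $Z \cong \GG_a$. If $Z \subseteq H$, then $Z$ is central and contained in $H$, so $U/H \simeq (U/Z)/(H/Z)$, and the induction hypothesis applied to the lower‑dimensional unipotent group $U/Z$ gives $\eulertop(U/H) = 1$. If $Z \not\subseteq H$, then $Z \cap H$ is a proper closed subgroup of $Z \cong \GG_a$, hence trivial (in characteristic zero the only closed subgroups of $\GG_a$ are $\{0\}$ and $\GG_a$); therefore $ZH$ is a closed subgroup of $U$ and the projection $p \colon U/H \to U/(ZH)$ has every fibre a coset of $ZH/H \simeq Z/(Z\cap H) \cong \GG_a$, so $\eulertop(p^{-1}(\tau')) = \eulertop(\GG_a) = 1$ for every $\tau' \in U/(ZH)$. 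By Lemma~\ref{multiplicative}, $\eulertop(U/H) = \eulertop(U/(ZH))$, and $U/(ZH) \simeq (U/Z)/(ZH/Z)$ is a homogeneous space of the lower‑dimensional unipotent group $U/Z$, so $\eulertop(U/(ZH)) = 1$ by the induction hypothesis. This completes the induction.

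The statement is soft, so there is no serious obstacle; the points that require care are the identification of the reduced fibre with the homogeneous space $U/\Stab_U^{}(x)$ (using closedness of the orbit and separability in characteristic zero) and the standard facts about unipotent groups over $\CC$ that make the induction legitimate: non‑triviality of the centre, its structure as a vector group, the classification of closed subgroups of $\GG_a$, the closedness of $ZH$, the existence of the quotients as quasi‑projective varieties, and the transitivity formula $U/(ZH) \simeq (U/Z)/(ZH/Z)$. An alternative, slightly stronger, route is to note that the same induction actually shows $U/H \cong \mathbb{A}^N$ as a variety, each step being a Zariski‑locally trivial $\GG_a$-torsor over an affine space (as $H^1$ of affine space with coefficients in the structure sheaf vanishes), from which $\eulertop(U/H) = 1$ is immediate; but the bare Euler‑characteristic statement above already suffices.
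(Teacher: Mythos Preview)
Your proof is correct and follows essentially the same approach as the paper: reduce via Lemma~\ref{multiplicative} to showing that every fibre has Euler characteristic $1$, identify the fibre with a $U$-homogeneous space $U/H$, and conclude. The only difference is that the paper treats the fact $\eulertop(U/H) = 1$ as known (citing that $U$ and all its closed subgroups are unipotent, hence have Euler characteristic $1$), whereas you supply a self-contained inductive proof via central $\GG_a$-subgroups; this extra detail is sound but not required.
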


\begin{proof}
Recall that the topological Euler characteristic of any unipotent algebraic group over $\CC$ is $1$.
Recall that any algebraic subgroup of a unipotent algebraic group over $\CC$ is itself unipotent.
Thus, the topological Euler characteristic of every $U$-homogeneous space is $1$.
It follows that $\eulertop(f^{-1}(\tau)) = 1$ for every $\tau \in T$.
The conclusion follows from Lemma~\ref{multiplicative}
\end{proof}

\begin{theorem}
\label{euler_fibers}
Let $\alpha$ be a singular value relative to the polynomial $P(x_1^{}, x_2^{}) = r x_1^{} + s x_2^{} + t$.
Assume that, for some integers $d$ and $r'$, where $0 \le r' < r$, $\M^\alpha((s, r), t)^\pss$ contains the space
\[
\M^\alpha((s, r'), \, t - d(r - r'))^\st \times \M((0, r - r'), \, d(r - r')).
\]
Let $\mu_1^{}, \dots, \mu_m^{}$ be positive integers such that $\mu_1^{} + \dots + \mu_m^{} = r - r'$.
Consider distinct lines $L_i^{} = \PP^1 \times \{ p_i^{} \}$ in $\PP^1 \times \PP^1$.
Consider closed points
\[
\langle \Lambda' \rangle \in \M^\alpha((s, r'), \, t - d(r - r'))^\st
\]
and
\[
\langle \Theta \rangle = \Bigl< \bigoplus_{1 \le i \le m} \mu_i^{} \Osh_{L_i}^{}(d - 1, 0) \Bigr> \in \M((0, r - r'), \, d(r - r')).
\]
Then we have the following equations:
\begin{align*}
\tag{i}
\eulertop(\fiberR(\langle \Lambda' \rangle, \langle \Theta \rangle))
& = \sum_{N_1 \in \Part(\mu_1)} \dots \sum_{N_m \in \Part(\mu_m)}
\prod_{1 \le i \le m} \Xi_\Rt^{}(\Lambda', L_i^{}, d - 1, N_i^{}); \\
\tag{ii}
\eulertop(\fiberL(\langle \Lambda' \rangle, \langle \Theta \rangle))
& = \sum_{N_1 \in \Part(\mu_1)} \dots \sum_{N_m \in \Part(\mu_m)}
\prod_{1 \le i \le m} \Xi_\Lt^{}(\Lambda', L_i^{}, d - 1, N_i^{}).
\end{align*}
\end{theorem}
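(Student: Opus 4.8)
The plan is to prove (i); part (ii) follows by the same argument, with Theorem~\ref{orbits}(i), Proposition~\ref{fibers}(i), Proposition~\ref{modulo_P}(i) and Proposition~\ref{semidirect}(i) replaced by their ``left'' counterparts. First I would reduce to a single semi-stable sheaf. By Theorem~\ref{orbits}(i) the right fiber is the disjoint union of the locally closed subsets $\eta_\Rt^{}(\Ext^1(\Lambda', \Lambda'')^\ses)$ as $\Lambda''$ runs over the finite set $\If(\Theta)$, so
\[
\eulertop(\fiberR(\langle \Lambda' \rangle, \langle \Theta \rangle)) = \sum_{\Lambda'' \in \If(\Theta)} \eulertop\big( \eta_\Rt^{}(\Ext^1(\Lambda', \Lambda'')^\ses) \big).
\]
Next I would describe $\If(\Theta)$. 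Since $p_1^{}, \dots, p_m^{}$ are distinct, the lines $L_i^{} = \PP^1 \times \{ p_i^{} \}$ are pairwise disjoint; hence any $\Lambda''$ with $\langle \Lambda'' \rangle = \langle \Theta \rangle$, being supported on the union of the thickenings $\mu_i^{} L_i^{}$, splits uniquely as $\Lambda'' = \bigoplus_{i} \Lambda_i''$ with $\Lambda_i''$ supported on $\mu_i^{} L_i^{}$ and S-equivalent to $\mu_i^{} \Osh_{L_i}^{}(d - 1, 0)$. Twisting by $\pr_1^* \Osh(1 - d)$, each $\Lambda_i''$ becomes a sheaf all of whose Jordan-H\"older factors equal $\Osh_{L_i}^{}$; using $\H^1(\PP^1, \Osh) = \{ 0 \}$ one checks that such a sheaf is a pull-back $\pr_2^* \Z_i^{}$ for a finite length $\Osh_{p_i}^{}$-module $\Z_i^{}$, whose isomorphism class corresponds by the structure theorem to a partition $N_i^{} \in \Part(\mu_i^{})$. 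Thus $\Lambda'' = \bigoplus_{i} \Osh(d - 1) \boxtimes \Z_i^{}$ and $\If(\Theta)$ is in bijection with $\Part(\mu_1^{}) \times \dots \times \Part(\mu_m^{})$.

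The second step computes $\eulertop(\eta_\Rt^{}(\Ext^1(\Lambda', \Lambda'')^\ses))$ for a fixed $\Lambda'' = \bigoplus_{i} \Osh(d - 1) \boxtimes \Z_i^{}$. Disjointness of supports gives $\Aut(\Lambda'') = \prod_{i} \Aut_R^{}(\Z_i^{})$ and $\Ext^1(\Lambda', \Lambda'') = \bigoplus_{i} \Ext^1(\Lambda', \Osh(d - 1) \boxtimes \Z_i^{})$, compatibly with the conjugation actions. The stable sheaves of the same slope as $\Lambda''$ occurring as quotients of $\Lambda''$ are exactly the $\Osh_{L_i}^{}(d - 1, 0)$, and any surjection of $\Lambda''$ onto one of them factors through the corresponding summand; a direct computation (in the spirit of the discussion preceding Proposition~\ref{modulo_P}) shows that the condition of Proposition~\ref{fibers}(i) defining $\Ext^1(\Lambda', \Lambda'')^\ses$ reduces, on the $i$-th summand, to the linear independence condition defining the open subset $\Ext^1(\Lambda', \Osh(d - 1) \boxtimes \Z_i^{})_0^{}$ of section~\ref{topological}, so that $\Ext^1(\Lambda', \Lambda'')^\ses = \prod_{i} \Ext^1(\Lambda', \Osh(d - 1) \boxtimes \Z_i^{})_0^{}$. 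Every construction of sections~\ref{automorphisms} and \ref{topological} now extends verbatim to $\Lambda''$, each relevant object being a direct sum over $i$: with $\Aut_R^{}(\Z_i^{}) = U_i^{} \rtimes P_i^{}$ as in Proposition~\ref{semidirect}(i) we have $\Aut(\Lambda'') = U \rtimes P$ for the unipotent group $U = \prod_{i} U_i^{}$ and $P = \prod_{i} P_i^{}$, and Proposition~\ref{modulo_P}(i) applied to each line produces the geometric quotient
\[
Q_\Rt^{} = \Ext^1(\Lambda', \Lambda'')^\ses / P = \prod_{i} \big( \Ext^1(\Lambda', \Osh(d - 1) \boxtimes \Z_i^{})_0^{} / P_i^{} \big), \qquad \eulertop(Q_\Rt^{}) = \prod_{i} \Xi_\Rt^{}(\Lambda', L_i^{}, d - 1, N_i^{}).
\]
The morphism $\eta_\Rt^{}$ is $P$-invariant, hence factors through a surjection $Q_\Rt^{} \to \eta_\Rt^{}(\Ext^1(\Lambda', \Lambda'')^\ses)$ whose fibers are, by Remark~\ref{local_sections}, orbits of an algebraic action of the unipotent group $U$; Lemma~\ref{same_euler} then gives $\eulertop(\eta_\Rt^{}(\Ext^1(\Lambda', \Lambda'')^\ses)) = \eulertop(Q_\Rt^{})$. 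Substituting into the sum over $\If(\Theta)$ yields formula (i).

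The hard part is the bookkeeping in the second step: one must verify carefully that the semi-stability condition of Proposition~\ref{fibers}(i) genuinely decouples over the pairwise disjoint lines and matches the linear independence condition built into section~\ref{topological}, and that Proposition~\ref{modulo_P} together with the $U$-orbit description of the fibers in Remark~\ref{local_sections} survives the passage from a sheaf supported on a single line to a direct sum of such sheaves supported on several lines. Once these points are settled, the remainder is a formal assembly using the additivity of the topological Euler characteristic over finite disjoint unions and its multiplicativity over products.
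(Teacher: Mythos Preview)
Your strategy is essentially the paper's own proof: decompose the fiber via Theorem~\ref{orbits}(i), parametrize $\If(\Theta)$ by tuples of partitions, identify $\Ext^1(\Lambda',\Lambda'')^\ses$ with the product $\prod_i \Ext^1(\Lambda',\Osh(d-1)\boxtimes\Z_i)_0$, take the quotient by $P=\prod_i P_i$ using Proposition~\ref{modulo_P}(i), and then use the unipotent group $U$ together with Lemma~\ref{same_euler} to pass from the quotient $Q_\Rt$ to the image of $\eta_\Rt$.

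One point deserves sharpening. You write that the surjection $Q_\Rt\to\eta_\Rt(\Ext^1(\Lambda',\Lambda'')^\ses)$ has fibers which are ``orbits of an algebraic action of the unipotent group $U$'' and then invoke Lemma~\ref{same_euler} directly. But there is no global algebraic $U$-action on $Q_\Rt$: although $U$ is normal in $\Aut(\Lambda'')\simeq U\rtimes P$, the subgroup $P$ is not, so the $U$-action on $\Ext^1(\Lambda',\Lambda'')^\ses$ does not descend to the $P$-quotient. What Remark~\ref{local_sections} actually provides is a finite stratification $\{Q_B\}_{B\in\Bf}$ of $Q_\Rt$ into locally closed pieces, each equipped (via a choice of local section $\tilde\xi_B$) with its own algebraic $U$-action, together with the statement that two points of $Q_B$ lie in the same $U$-orbit iff their images under $\tilde\xi_B$ lie in the same $\Aut(\Lambda'')$-orbit. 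The paper then checks that each $Q_B$ is saturated for the induced map $\upsilon\colon Q_\Rt\to\eta_\Rt(\Ext^1(\Lambda',\Lambda'')^\ses)$, applies Lemma~\ref{same_euler} to each corestriction $\upsilon|_{Q_B}\colon Q_B\to\upsilon(Q_B)$, and sums over $B$. Your outline cites the right ingredients, but as written it skips this stratification step, without which Lemma~\ref{same_euler} does not literally apply.
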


\begin{proof}
Recall, from Theorem~\ref{orbits}(i), that $\fiberR(\langle \Lambda' \rangle, \langle \Theta \rangle)$ can be decomposed
into constructible subsets of the form $\eta_\Rt^{}(\Ext^1(\Lambda', \Lambda'')^\ses)$, with $\Lambda'' \in \If(\Theta)$.
The set of isomorphism classes $\If(\Theta)$ is in a bijective correspondence
with the set of strings of partitions $\Nf = (N_i^{})_{1 \le i \le m}^{}$,
where $N_i^{} \in \Part(\mu_i^{})$ and $N_i^{} = \{ \nu_{ij}^{} \}_{1 \le j \le n_i}^{}$.
To $\Nf$ we associate the isomorphism class of
\[
\Lambda''_\Nf = \bigoplus_{1 \le i \le m} \Osh(d - 1) \boxtimes \Z_i^{},
\quad \text{where} \quad \Z_i^{} = \bigoplus_{1 \le j \le n_i} \Osh_{p_i}^{} / (\zeta_i^{\nu_{ij}}).
\]
Here $\zeta_i^{}$ is a local parameter of $\PP^1$ at $p_i^{}$.
Using the additivity of the Euler characteristic, we obtain
\begin{equation}
\label{additivity_1}
\eulertop(\fiberR(\langle \Lambda' \rangle, \langle \Theta \rangle))
= \sum_{N_1 \in \Part(\mu_1)} \dots \sum_{N_m \in \Part(\mu_m)} \eulertop( \eta_\Rt^{}(\Ext^1(\Lambda', \Lambda''_\Nf)^\ses)).
\end{equation}
We have described $\Ext^1(\Lambda', \Lambda''_\Nf)^\ses$ at Proposition~\ref{fibers}(i).
The sheaf $\Delta''$ must be of the form $\Osh_{L_i}^{}(d - 1, 0)$ for some index $1 \le i \le m$.
The surjective morphism $\Lambda''_\Nf \to \Delta''$ factors as follows:
\[
\Lambda''_\Nf \xrightarrow{\pr_i} \Osh(d - 1) \boxtimes \Z_i^{} \lra
\Osh(d - 1) \boxtimes \Z_i^{} / \zeta_i^{} \Z_i^{} \simeq \bigoplus_{n_i} \Osh_{L_i}^{}(d - 1, 0) \overset{a_i}{\lra} \Osh_{L_i}^{}(d - 1, 0).
\]
Here $a_i^{}$ is a non-zero row vector of length $n_i^{}$.
The morphism $\delta_\Rt^{}$ factors as follows:
\begin{multline*}
\Ext^1(\Lambda', \Lambda''_\Nf) \xrightarrow{\pr_i} \Ext^1(\Lambda', \, \Osh(d - 1) \boxtimes \Z_i^{})
\xrightarrow{\psi_\Rt} \Ext^1(\Lambda', \, \Osh(d - 1) \boxtimes \Z_i^{} / \zeta_i^{} \Z_i^{}) \\
\simeq \bigoplus_{n_i} \Ext^1(\Lambda', \, \Osh_{L_i}^{}(d - 1, 0)) \overset{a_i}{\lra} \Ext^1(\Lambda', \, \Osh_{L_i}^{}(d - 1, 0)).
\end{multline*}
According to Proposition~\ref{fibers}(i), an extension is semi-stable if and only if,
for all indices $i$ and for all non-zero row vectors $a_i^{}$, its image under the above composition is non-zero.
Equivalently,
\begin{align*}
\Ext^1(\Lambda', \Lambda''_\Nf)^\ses
& = \bigcap_{1 \le i \le m} \pr_i^{-1} \psi_\Rt^{-1} \Ext^1(\Lambda', \, \Osh(d - 1) \boxtimes \Z_i^{} / \zeta_i^{} \Z_i^{})_0^{} \\
& = \bigcap_{1 \le i \le m} \pr_i^{-1} \Ext^1(\Lambda', \, \Osh(d - 1) \boxtimes \Z_i^{})_0^{} \\
& \simeq \prod_{1 \le i \le m} \Ext^1(\Lambda', \, \Osh(d - 1) \boxtimes \Z_i^{})_0^{}.
\end{align*}
For each index $i$ we fix isomorphisms $\Aut_{\Osh_{p_i}}^{}(\Z_i^{}) \simeq U_i^{} \rtimes P_i^{}$
as in Proposition~\ref{semidirect}(i).
Note that
\[
\Aut(\Lambda''_\Nf) = \prod_{1 \le i \le m} \Aut_{\Osh_{p_i}}^{}(\Z_i^{}). \qquad
\text{Write} \quad U = \prod_{1 \le i \le m} U_i^{}, \quad P = \prod_{1 \le i \le m} P_i^{}.
\]
We identify $P$ with the subgroup $\{ 1 \} \times P \le \Aut(\Lambda''_\Nf)$.
The group $\Aut(\Lambda''_\Nf)$ acts diagonally on $\Ext^1(\Lambda', \Lambda''_\Nf)^\ses$.
According to Proposition~\ref{modulo_P}(i), relative to the restricted action of $P$, there exists a geometric quotient
\[
Q = \Ext^1(\Lambda', \Lambda''_\Nf)^\ses / P \simeq \prod_{1 \le i \le m} \Ext^1(\Lambda', \, \Osh(d - 1) \boxtimes \Z_i^{})_0^{} / P_i^{}.
\]
Moreover, we have the formula
\begin{equation}
\label{euler_Q}
\eulertop(Q) = \prod_{1 \le i \le m} \eulertop(\Ext^1(\Lambda', \, \Osh(d - 1) \boxtimes \Z_i^{})_0^{} / P_i^{})
= \prod_{1 \le i \le m} \Xi_\Rt^{}(\Lambda', L_i^{}, d - 1, N_i^{}).
\end{equation}
Denote by $\tilde{\varphi} \colon \Ext^1(\Lambda', \Lambda''_\Nf)^\ses \to Q$ the geometric quotient map modulo $P$.
Arguing as in Remark~\ref{local_sections}, we can show that $Q$ admits a finite decomposition $\{ Q_B^{} \}_{B \in \Bf}^{}$
into locally closed reduced subvarieties satisfying the following two properties.
Firstly, for each $B \in \Bf$, there is a morphism of varieties $\tilde{\xi}_B^{} \colon Q_B^{} \to \Ext^1(\Lambda', \Lambda''_\Nf)^\ses$
such that $\tilde{\varphi} \circ \tilde{\xi}_B^{}$ is the inclusion morphism of $Q_B^{}$.
Secondly, there is an algebraic action of $U$ on $Q_B^{}$ such that two closed points $q$ and $q' \in Q_B^{}$ lie in the same $U$-orbit
precisely if $\tilde{\xi}_B^{}(q)$ and $\tilde{\xi}_B^{}(q')$ lie in the same $\Aut(\Lambda''_\Nf)$-orbit.
According to Theorem~\ref{orbits}(i), the fibers of $\eta_\Rt^{}$ are precisely the $\Aut(\Lambda''_\Nf)$-orbits.
In particular, $\eta_\Rt^{}$ is $P$-equivariant.
By the universal property of a geometric quotient, there exists a morphism $\upsilon$ making the diagram
\[
\xymatrix
{
Q_B^{} \ar[r]^-{\tilde{\xi}_B^{}} \ar[dr]_-{\upsilon_B^{}} & \Ext^1(\Lambda', \Lambda''_\Nf)^\ses \ar[r]^-{\tilde{\varphi}} \ar[d]^-{\eta_\Rt^{}} & Q \ar[ld]^-{\upsilon} \\
& \fiberR(\langle \Lambda' \rangle, \langle \Theta \rangle)
}
\]
commute.
We claim that the fibers of $\upsilon_B^{} = \upsilon |_{Q_B}$ are precisely the $U$-orbits and that $Q_B^{}$ is a union of fibers of $\upsilon$.
Indeed, for a closed point $q \in Q_B^{}$ we have the relations
\begin{alignat*}{2}
\upsilon^{-1}(\upsilon(q)) & = \tilde{\varphi}(\Aut(\Lambda''_\Nf). \tilde{\xi}_B^{}(q)) \\
& = \tilde{\varphi}(\{ g u. \tilde{\xi}_B^{}(q) \mid u \in U, \ g \in P \}) \qquad & & \text{because $\Aut(\Lambda''_\Nf) \simeq U \rtimes P$} \\
& = \tilde{\varphi}(\{ u. \tilde{\xi}_B^{}(q) \mid u \in U \}) & & \text{because $\tilde{\varphi}$ is $P$-equivariant} \\
& = \tilde{\varphi}(\{ \tilde{\xi}_B^{}(u. q) \mid u \in U \}) & & \text{because $\tilde{\xi}_B^{}$ is $U$-equivariant} \\
& = U. q \subset Q_B^{}.
\end{alignat*}
Applying Lemma~\ref{same_euler} to the corestricted morphism $\upsilon_B^{} \colon Q_B^{} \to \upsilon(Q_B^{})$, we deduce that
\[
\eulertop(\upsilon(Q_B^{})) = \eulertop(Q_B^{}).
\]
Using the additivity of the topological Euler characteristic, we obtain the formula
\begin{equation}
\label{additivity_2}
\eulertop(\eta_\Rt^{}(\Ext^1(\Lambda', \Lambda''_\Nf)^\ses))
= \eulertop(\upsilon(Q)) = \sum_{B \in \Bf} \eulertop(\upsilon(Q_B^{}))
= \sum_{B \in \Bf} \eulertop(Q_B^{}) = \eulertop(Q).
\end{equation}
Part (i) of the proposition follows by combining formulas~\eqref{additivity_1}, \eqref{euler_Q} and \eqref{additivity_2}.
Part (ii) can be proved in a similar manner,
by exploring Theorem~\ref{orbits}(ii), Propositions~\ref{fibers}(ii), \ref{semidirect}(ii) and \ref{modulo_P}(ii),
and the analog of Remark~\ref{local_sections} for the quotient $\Ext^1(\Osh(d) \boxtimes \Z, \, \Lambda')_0^{} / \bar{P}$.
\end{proof}

\begin{proposition}
\label{Xi_calculation}
Let $\alpha$ be a singular value relative to the polynomial $P(x_1^{}, x_2^{}) = r x_1^{} + s x_2^{} + t$.
Assume that, for some integers $d$ and $r'$, where $0 \le r' < r$, $\M^\alpha((s, r), t)^\pss$ contains the space
\[
\M^\alpha((s, r'), \, t - d(r - r'))^\st \times \M((0, r - r'), \, d(r - r')).
\]
Consider a closed point
\[
\langle \Lambda' \rangle = \langle (\Gamma', \F') \rangle \in \M^\alpha((s, r'), \, t - d(r - r'))^\st.
\]
Consider a line $L = \PP^1 \times \{ p \}$ and a partition $N$ as in Notation~\ref{Xi}.
Let $\zeta$ be a local parameter of $\PP^1$ at $p$ and let $R = \Osh_p^{}$.
Then we have the following equations:
\begin{alignat*}{2}
\tag{i}
& \Xi_\Rt^{}(\Lambda', L, d - 1, N) && = \prod_{1 \le j \le l} \binom{v_\Rt^j - k_{j - 1}^{}}{k_j^{} - k_{j - 1}^{}}, \\
\tag{ii}
& \Xi_\Lt^{}(\Lambda', L, d - 1, N) && = \prod_{1 \le j \le l} \binom{v_\Lt^j - k_{j - 1}^{}}{k_j^{} - k_{j - 1}^{}},
\end{alignat*}
where
\begin{alignat*}{2}
v_\Rt^j & = \ext^1(\Lambda', \, \Osh(d - 1) \boxtimes R / (\zeta^{\nu_{k_j}}))
&& - \ext^1(\Lambda', \, \Osh(d - 1) \boxtimes R / (\zeta^{\nu_{k_j} - 1})), \\
v_\Lt^j & = \ext^1(\Osh(d - 1) \boxtimes R / (\zeta^{\nu_{k_j}}), \, \Lambda')
&& - \ext^1(\Osh(d - 1) \boxtimes R / (\zeta^{\nu_{k_j} - 1}), \, \Lambda') \\
& = \hom(\F', \, \Osh(d - 3) \boxtimes R / (\zeta^{\nu_{k_j}}))
&& - \hom(\F', \, \Osh(d - 3) \boxtimes R / (\zeta^{\nu_{k_j} - 1})) + s.
\end{alignat*}
\end{proposition}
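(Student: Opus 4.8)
The plan is to identify $v_\Rt^j$ and $v_\Lt^j$, which by Notation~\ref{Xi} are the dimensions of $\Img(\psi_\Rt^{k_j})$ and $\Img(\psi_\Lt^{k_j})$, by chasing the long exact $\Ext$-sequences attached to two short exact sequences of $R$-modules; once this is done the binomial formulas are exactly the content of Notation~\ref{Xi} with $d$ replaced by $d - 1$, and for part~(ii) the remaining identity is a direct application of Proposition~\ref{ext_surface}(ii). Throughout I write $\nu = \nu_{k_j}^{}$, and I use that the hypothesis that $\M^\alpha((s, r), t)^\pss$ contains the displayed product forces $\p_\alpha^{}(\Lambda') = (\alpha + t)/(r + s) = d$, so that all the sheaves $\Osh(d - 1) \boxtimes R / (\zeta^{\mu})$ — being iterated self-extensions of the stable sheaf $\Osh_L^{}(d - 1, 0) = \Osh(d - 1) \boxtimes R / (\zeta)$ of slope $d$ — are semi-stable of the common slope $d = \p_\alpha^{}(\Lambda')$.

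For part~(i) I would start from the exact sequence of $R$-modules $0 \lra R / (\zeta^{\nu - 1}) \overset{\cdot \zeta}{\lra} R / (\zeta^{\nu}) \lra R / (\zeta) \lra 0$, tensor it with the flat sheaf $\pr_1^* \Osh(d - 1)$, and apply the functors $\Ext^i(\Lambda', -)$ in the category of coherent systems. The resulting long exact sequence contains the segment
\[
\Hom(\Lambda', \Osh(d - 1) \boxtimes R / (\zeta)) \overset{\partial}{\lra}
\Ext^1(\Lambda', \Osh(d - 1) \boxtimes R / (\zeta^{\nu - 1})) \overset{f}{\lra}
\Ext^1(\Lambda', \Osh(d - 1) \boxtimes R / (\zeta^{\nu})) \overset{\psi_\Rt^{k_j}}{\lra} V_\Rt^{},
\]
the last arrow being $\psi_\Rt^{k_j}$ by the very definition of $\psi_\Rt^{k_j}$. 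Exactness gives $\Ker(\psi_\Rt^{k_j}) = \Img(f)$, so $v_\Rt^j = \ext^1(\Lambda', \Osh(d - 1) \boxtimes R / (\zeta^{\nu})) - \dim_\CC \Img(f)$, and it suffices to show that $f$ is injective, equivalently that $\partial = 0$, equivalently that $\Hom(\Lambda', \Osh(d - 1) \boxtimes R / (\zeta)) = \{ 0 \}$. But $\Osh_L^{}(d - 1, 0)$ is a stable sheaf of slope $d = \p_\alpha^{}(\Lambda')$, so exactly as in the proof of Proposition~\ref{hom_vanishes} any non-zero morphism $\Lambda' \to (\{ 0 \}, \Osh_L^{}(d - 1, 0))$ would have to be injective, which is impossible since $\Lambda'$ has order $1$ and the target has order $0$. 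Hence $f$ is injective and $v_\Rt^j = \ext^1(\Lambda', \Osh(d - 1) \boxtimes R / (\zeta^{\nu_{k_j}})) - \ext^1(\Lambda', \Osh(d - 1) \boxtimes R / (\zeta^{\nu_{k_j} - 1}))$; the case $\nu = 1$ is covered by $R / (\zeta^0) = \{ 0 \}$, so the last term is then $0$.

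Part~(ii) is the dual computation. Start instead from $0 \lra R / (\zeta) \overset{\cdot \zeta^{\nu - 1}}{\lra} R / (\zeta^{\nu}) \lra R / (\zeta^{\nu - 1}) \lra 0$, whose first arrow induces $\psi_\Lt^{k_j}$, tensor with $\pr_1^* \Osh(d - 1)$, and apply the functors $\Ext^i(-, \Lambda')$. The vanishing of $\Hom((\{ 0 \}, \Osh_L^{}(d - 1, 0)), \Lambda')$ — again by the argument of Proposition~\ref{hom_vanishes}, a non-zero such morphism being forced to be surjective, which is impossible on the order-$1$ terms — makes the corresponding connecting map vanish, and the same dimension count yields $v_\Lt^j = \ext^1(\Osh(d - 1) \boxtimes R / (\zeta^{\nu_{k_j}}), \Lambda') - \ext^1(\Osh(d - 1) \boxtimes R / (\zeta^{\nu_{k_j} - 1}), \Lambda')$. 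For the second displayed expression I would apply Proposition~\ref{ext_surface}(ii) to each of these two $\Ext^1$-groups: since $\omega_X^{} = \Osh(-2, -2)$ one has $(\Osh(d - 1) \boxtimes R / (\zeta^{\mu})) \tensor \omega_X^{} \simeq \Osh(d - 3) \boxtimes R / (\zeta^{\mu})$ and $\langle \chern(\Lambda'), \chern(\Osh(d - 1) \boxtimes R / (\zeta^{\mu})) \rangle = \langle (s, r'), (0, \mu) \rangle = s \mu$, whence $\ext^1(\Osh(d - 1) \boxtimes R / (\zeta^{\mu}), \Lambda') = s \mu + \hom(\F', \Osh(d - 3) \boxtimes R / (\zeta^{\mu}))$; subtracting the values at $\mu = \nu_{k_j}$ and $\mu = \nu_{k_j} - 1$ produces $v_\Lt^j = s + \hom(\F', \Osh(d - 3) \boxtimes R / (\zeta^{\nu_{k_j}})) - \hom(\F', \Osh(d - 3) \boxtimes R / (\zeta^{\nu_{k_j} - 1}))$.

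The only genuinely delicate point is the vanishing of those two $\Hom$-groups, which is precisely what makes the connecting maps zero; it is a short stability argument of exactly the type used for Proposition~\ref{hom_vanishes}, needing only that $\Lambda'$ be $\alpha$-stable, that $\Osh_L^{}(d - 1, 0)$ be semi-stable of the common slope $d$, and the order discrepancy between $\Lambda'$ and the torsion sheaves. Everything else is formal: the long exact $\Ext$-sequences for coherent systems are available because a short exact sequence of sheaves, regarded as systems of order $0$, is short exact in the abelian category of coherent systems (alternatively one derives the sequences from \cite[Corollaire 1.6]{he} applied naturally to the three sheaves at hand), and the passage to products of binomial coefficients is just the definition recalled in Notation~\ref{Xi}.
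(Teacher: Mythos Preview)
Your proposal is correct and follows essentially the same route as the paper: the same two short exact sequences of $R$-modules, the same vanishing of $\Hom$ via the stability argument of Proposition~\ref{hom_vanishes} to force injectivity of the connecting map, and the same application of Proposition~\ref{ext_surface}(ii) with $\omega_X^{} = \Osh(-2,-2)$ and $\langle (s,r'),(0,\mu)\rangle = s\mu$ to obtain the second expression for $v_\Lt^j$. The only cosmetic difference is that the paper writes out part~(ii) in detail and declares part~(i) analogous, whereas you do the reverse.
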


\begin{proof}
Write $\nu = \nu_{k_j}^{}$.
From the exact sequence
\[
0 \lra \Osh(d - 1) \boxtimes R / (\zeta) \xrightarrow{\cdot \zeta^{\nu - 1}} \Osh(d - 1) \boxtimes R / (\zeta^\nu)
\lra \Osh(d - 1) \boxtimes R / (\zeta^{\nu - 1}) \lra 0
\]
we obtain the exact sequence
\begin{alignat*}{2}
& && \Hom(\Osh_L^{}(d - 1, 0), \Lambda') \\
& \lra \Ext^1(\Osh(d - 1) \boxtimes R / (\zeta^{\nu - 1}), \Lambda') \lra \Ext^1(\Osh(d - 1) \boxtimes R / (\zeta^\nu), \Lambda') \lra
&& \Ext^1(\Osh_L^{}(d - 1, 0), \Lambda').
\end{alignat*}
We apply Proposition~\ref{hom_vanishes}(ii) to $\Lambda'$ and to $\Lambda'' = (\{ 0 \}, \Osh_L^{}(d - 1, 0))$.
The hypothesis of Proposition~\ref{hom_vanishes} is satisfied
because $\Lambda'$ is $\alpha$-stable, $\Lambda''$ is semi-stable and $\p_\alpha^{}(\Lambda') = d = \p(\Lambda'')$.
We deduce that $\Hom(\Osh_L^{}(d - 1, 0), \, \Lambda') = \{ 0 \}$.
Recall that $v_\Lt^j$ is the dimension of the image of the last morphism in the above diagram.
We obtain the formula
\[
v_\Lt^j = \ext^1(\Osh(d - 1) \boxtimes R / (\zeta^\nu), \, \Lambda') - \ext^1(\Osh(d - 1) \boxtimes R / (\zeta^{\nu - 1}), \, \Lambda').
\]
We apply Proposition~\ref{ext_surface}(ii) to $\Lambda'$ and to $\Lambda_\nu'' = (\{ 0 \}, \Osh(d - 1) \boxtimes R / (\zeta^\nu))$.
The hypothesis of Proposition~\ref{ext_surface} is satisfied
because $\Lambda'$ is $\alpha$-stable, $\Lambda_\nu''$ is semi-stable and $\p_\alpha^{}(\Lambda') = \p(\Lambda_\nu'')$.
Thus,
\begin{align*}
\ext^1(\Osh(d - 1) \boxtimes R / (\zeta^\nu), \, \Lambda')
& = \hom(\F', \, \Osh(d - 3) \boxtimes R / (\zeta^\nu)) + \langle \chern(\Lambda'), \, \chern(\Osh(d - 1) \boxtimes R / (\zeta^\nu)) \rangle \\
& = \hom(\F', \, \Osh(d - 3) \boxtimes R / (\zeta^\nu)) + \langle (s, r'), (0, \nu) \rangle \\
& = \hom(\F', \, \Osh(d - 3) \boxtimes R / (\zeta^\nu)) + s \nu. \\
\intertext{Analogously, applying Proposition~\ref{ext_surface}(ii) to $\Lambda'$ and to $\Lambda_{\nu - 1}''$, we obtain the formula}
\ext^1(\Osh(d - 1) \boxtimes R / (\zeta^{\nu - 1}), \, \Lambda')
& = \hom(\F', \, \Osh(d - 3) \boxtimes R / (\zeta^{\nu - 1})) + s (\nu - 1).
\end{align*}
Thus,
\[
v_\Lt^j = \hom(\F', \, \Osh(d - 3) \boxtimes R / (\zeta^\nu)) - \hom(\F', \, \Osh(d - 3) \boxtimes R / (\zeta^{\nu - 1})) + s.
\]
Substituting the values of $v_\Lt^j$ into the expression $\Xi_\Lt^{}(\Lambda', L, d - 1, N)$, found at Notation~\ref{Xi}, concludes the proof of (ii).
The proof of part (i) is analogous and uses Proposition~\ref{hom_vanishes}(i).
\end{proof}

\noindent
In the remaining part of this section we shall examine the simplest case of Theorem~\ref{euler_fibers},
namely, the case in which $v_\Rt^j = \dim_{\CC}^{} V_\Rt^{}$ and $v_\Lt^j = \dim_{\CC}^{} V_\Lt^{}$ for all indices $j$.

\begin{proposition}
\label{Xi_simple}
We adopt the hypotheses and definitions of Proposition~\ref{Xi_calculation} and Notation~\ref{Xi}.
\begin{enumerate}
\item[\emph{(i)}]
We assume, in addition, that $d \ge 0$.
Then $\Xi_\Rt^{}(\Lambda', L, d - 1, N) = \Xi(d + s, N)$.
\item[\emph{(ii)}]
We assume, in addition, that $d \le 2$.
Then $\Xi_\Lt^{}(\Lambda', L, d - 1, N) = \Xi(s, N)$.
\end{enumerate}
\end{proposition}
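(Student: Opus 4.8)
The plan is to reduce both assertions, via Proposition~\ref{Xi_calculation} and Notation~\ref{Xi}, to a single numerical statement. Recall from Notation~\ref{Xi} that $\Xi_\Rt^{}(\Lambda', L, d - 1, N) = \prod_{1 \le j \le l} \binom{v_\Rt^j - k_{j - 1}^{}}{k_j^{} - k_{j - 1}^{}}$, that $\Xi_\Lt^{}(\Lambda', L, d - 1, N) = \prod_{1 \le j \le l} \binom{v_\Lt^j - k_{j - 1}^{}}{k_j^{} - k_{j - 1}^{}}$, and that $\Xi(v, N) = \prod_{1 \le j \le l} \binom{v - k_{j - 1}^{}}{k_j^{} - k_{j - 1}^{}}$. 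Thus it suffices to prove that $v_\Rt^j = d + s$ for all $j$ when $d \ge 0$, and that $v_\Lt^j = s$ for all $j$ when $d \le 2$. (Geometrically, the conditions $W_j^{} \subset V_\Rt^j$ that cut out $\FF(k_1^{}, \dots, k_l^{}; v_\Rt^1, \dots, v_\Rt^l; V_\Rt^{})$ inside $\FF(k_1^{}, \dots, k_l^{}; V_\Rt^{})$ become vacuous once all $v_\Rt^j$ equal $\dim_\CC^{} V_\Rt^{}$.)

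For part~(i), fix $j$ and write $\nu = \nu_{k_j}^{}$. By Proposition~\ref{Xi_calculation}(i),
\[
v_\Rt^j = \ext^1(\Lambda', \, \Osh(d - 1) \boxtimes R / (\zeta^{\nu})) - \ext^1(\Lambda', \, \Osh(d - 1) \boxtimes R / (\zeta^{\nu - 1})),
\]
so I would compute, for every integer $m \ge 0$, the invariants of the sheaf $\F_m'' = \Osh(d - 1) \boxtimes R / (\zeta^m)$ on $X = \PP^1 \times \PP^1$. The projection formula along $\pr_1^{}$ (or the K\"unneth formula) gives $\H^i(\F_m'') \simeq \H^i(\PP^1, \Osh(d - 1))^{\oplus m}$, while $\chern(\F_m'') = (0, m)$ and $\euler(\F_m'') = m d$. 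Now $X$ is a del~Pezzo surface, and the hypothesis $d \ge 0$ is used precisely to ensure $\H^1(\PP^1, \Osh(d - 1)) = \{ 0 \}$, hence $\H^1(\F_m'') = \{ 0 \}$. Therefore Proposition~\ref{ext_delPezzo}(i) applies and yields $\ext^1(\Lambda', \F_m'') = \langle \chern(\Lambda'), \chern(\F_m'') \rangle + \euler(\F_m'') = \langle (s, r'), (0, m) \rangle + m d = s m + m d$. Substituting $m = \nu$ and $m = \nu - 1$ gives $v_\Rt^j = (s \nu + \nu d) - (s(\nu - 1) + (\nu - 1) d) = s + d$, independently of $j$, which proves~(i).

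Part~(ii) is parallel, with the roles of $\H^0$ and $\H^1$ exchanged. One uses that $\F_m'' \tensor \omega_X^{} \simeq \Osh(d - 3) \boxtimes R / (\zeta^m)$ --- tensoring the torsion module $R / (\zeta^m)$ by a line bundle does not alter its isomorphism type --- so $\H^0(\F_m'' \tensor \omega_X^{}) \simeq \H^0(\PP^1, \Osh(d - 3))^{\oplus m}$, which vanishes exactly because $d \le 2$. Proposition~\ref{ext_delPezzo}(ii) then gives $\ext^1(\F_m'', \Lambda') = \langle \chern(\Lambda'), \chern(\F_m'') \rangle = s m$; combined with the identity $\ext^1(\F_m'', \Lambda') = \hom(\F', \, \Osh(d - 3) \boxtimes R / (\zeta^m)) + s m$ established in the proof of Proposition~\ref{Xi_calculation}(ii), this forces $\hom(\F', \, \Osh(d - 3) \boxtimes R / (\zeta^m)) = \{ 0 \}$. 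Substituting $m = \nu$ and $m = \nu - 1$ into the formula of Proposition~\ref{Xi_calculation}(ii) gives $v_\Lt^j = 0 - 0 + s = s$ for all $j$, and the claim follows.

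The step that needs the most care --- more a matter of bookkeeping than a genuine obstacle --- is the invocation of Propositions~\ref{ext_surface} and \ref{ext_delPezzo} for the sheaves $\F_m''$, whose multiplicity $m$ need not equal $r - r'$, so that the literal ``contains'' hypothesis of those statements is not in force. As in the proof of Proposition~\ref{Xi_calculation}, one checks that their proofs use only the $\alpha$-stability of $\Lambda'$, the semi-stability of the sheaf in question (here $\F_m''$ is S-equivalent to $m\, \Osh_L^{}(d - 1, 0)$, all of whose Jordan-H\"older factors have slope $d$), and the slope equality $\p_\alpha^{}(\Lambda') = d = \p(\F_m'')$, each of which holds. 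Everything else reduces to the elementary facts that $\H^1(\PP^1, \Osh(d - 1)) = \{ 0 \}$ if and only if $d \ge 0$ and that $\H^0(\PP^1, \Osh(d - 3)) = \{ 0 \}$ if and only if $d \le 2$.
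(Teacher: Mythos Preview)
Your proof is correct and follows essentially the same line as the paper's: both reduce to showing $v_\Rt^j = d + s$ (respectively $v_\Lt^j = s$) via Proposition~\ref{ext_delPezzo}, using that $\H^1(\Osh(d-1) \boxtimes R/(\zeta^m)) = \{0\}$ for $d \ge 0$ and $\H^0(\Osh(d-3) \boxtimes R/(\zeta^m)) = \{0\}$ for $d \le 2$. The only cosmetic difference is that you route through the difference formula of Proposition~\ref{Xi_calculation} and compute $\ext^1(\Lambda', \F_m'') = m(d+s)$ for each $m$, whereas the paper instead shows directly that $\psi_\Rt^{k_j}$ is surjective by applying the long exact sequence to $0 \to \Osh(d-1) \boxtimes R/(\zeta^{\nu-1}) \xrightarrow{\cdot \zeta} \Osh(d-1) \boxtimes R/(\zeta^\nu) \to \Osh_L^{}(d-1,0) \to 0$ and invoking the $\Ext^2$-vanishing of Proposition~\ref{ext_delPezzo}(i); your caveat about the hypotheses of Propositions~\ref{ext_surface} and~\ref{ext_delPezzo} matches exactly how the paper handles this in the proof of Proposition~\ref{Xi_calculation}.
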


\begin{proof}
We claim that, for each $1 \le j \le l$, $v_\Rt^j = \ext^1(\Lambda', \, \Osh_L^{}(d - 1, 0))$,
that is, $\psi_\Rt^{k_j}$ is surjective.
More generally, we claim that for every integer $\nu \ge 1$, the linear map
\[
\psi \colon \Ext^1(\Lambda', \, \Osh(d - 1) \boxtimes R / (\zeta^\nu)) \lra \Ext^1(\Lambda', \, \Osh(d - 1) \boxtimes R / (\zeta))
\]
induced by the quotient morphism $R / (\zeta^\nu) \to R / (\zeta)$ is surjective.
From the exact sequence
\[
0 \lra \Osh(d - 1) \boxtimes R / (\zeta^{\nu - 1}) \overset{\cdot \zeta}{\lra}
\Osh(d - 1) \boxtimes R / (\zeta^\nu) \lra \Osh(d - 1) \boxtimes R / (\zeta) \lra 0
\]
we obtain the exact sequence
\[
\Ext^1(\Lambda', \, \Osh(d - 1) \boxtimes R / (\zeta^\nu)) \overset{\psi}{\lra} \Ext^1(\Lambda', \, \Osh(d - 1) \boxtimes R / (\zeta))
\lra \Ext^2(\Lambda', \, \Osh(d - 1) \boxtimes R / (\zeta^{\nu - 1})).
\]
We now apply Proposition~\ref{ext_delPezzo}(i) with $\F'' = \Osh(d - 1) \boxtimes R / (\zeta^{\nu - 1})$.
The hypothesis of Proposition~\ref{ext_delPezzo}(i) is satisfied because $\F''$ is semi-stable,
$\p_{\alpha}^{}(\Lambda') = \p(\F'')$ and $\H^1(\F'') = \{ 0 \}$.
We deduce that $\Ext^2(\Lambda', \F'') = \{ 0 \}$.
This proves the claim.
We apply again Proposition~\ref{ext_delPezzo}(i), this time to the semi-stable sheaf $\Osh_L^{}(d - 1, 0)$.
We obtain the equations
\begin{align*}
v_\Rt^j = \ext^1(\Lambda', \, \Osh_L^{}(d - 1, 0)) & = \langle \chern(\Lambda'), \chern(\Osh_L^{}(d - 1, 0)) \rangle + \euler(\Osh_L^{}(d - 1, 0)) \\
& = \langle (s, r'), (0, 1) \rangle + d = s + d.
\end{align*}
Substituting $v_\Rt^j$ into the defining expression for $\Xi_\Rt^{}(\Lambda', L, d - 1, N)$,
yields part (i) of the proposition.
In order to prove part (ii), we exploit Proposition~\ref{ext_delPezzo}(ii) and we obtain the equations
$v_\Lt^j = \ext^1(\Osh_L^{}(d - 1, 0), \, \Lambda') = s$ for $1 \le j \le l$.
\end{proof}

\begin{proposition}
\label{Xi_combinations}
Let $\mu$ and $v$ be positive integers.
Then we have the equation
\[
\sum_{N \in \Part(\mu)} \Xi(v, N) = \binom{v + \mu - 1}{\mu}.
\]
\end{proposition}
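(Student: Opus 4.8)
\emph{Proof plan.}

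The plan is to establish the identity by a direct bijective argument, after rewriting both sides combinatorially. First I would record what $\Xi(v,N)$ really is: by Notation~\ref{Xi}, if $N \in \Part(\mu)$ has distinct part sizes occurring with multiplicities $m_1^{} = k_1^{}$, $m_2^{} = k_2^{} - k_1^{}$, \dots, $m_l^{} = k_l^{} - k_{l - 1}^{}$ (so that the total number of parts is $n = k_l^{} = m_1^{} + \dots + m_l^{}$), then $\Xi(v,N)$ equals the multinomial coefficient $\binom{v}{m_1^{}, m_2^{}, \dots, m_l^{}, v - n}$, with the convention that it vanishes when $n > v$ --- indeed in that case the telescoping product $\prod_{j} \binom{v - k_{j - 1}^{}}{k_j^{} - k_{j - 1}^{}}$ contains a factor whose numerator is smaller than its denominator. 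Simultaneously I would recall the stars-and-bars fact that $\binom{v + \mu - 1}{\mu}$ counts the weak compositions $(c_1^{}, \dots, c_v^{})$ of $\mu$, i.e.\ the $v$-tuples of non-negative integers with $c_1^{} + \dots + c_v^{} = \mu$.

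Next I would classify the weak compositions according to the partition of $\mu$ formed by their strictly positive entries. Given such a composition $c$, sorting its positive entries in non-increasing order produces a partition $N(c) \in \Part(\mu)$ with some number $n \le v$ of parts. For a fixed $N$ with multiplicities $m_1^{}, \dots, m_l^{}$, the compositions $c$ with $N(c) = N$ correspond bijectively to the ways of splitting the index set $\{ 1, \dots, v \}$ into ordered blocks of sizes $m_1^{}, \dots, m_l^{}, v - n$ --- the $j$-th block being the set of indices carrying the $j$-th largest positive value, the last block being the indices where $c_i^{} = 0$. There are exactly $\binom{v}{m_1^{}, \dots, m_l^{}, v - n} = \Xi(v,N)$ of them; when $n > v$ there are none, again matching $\Xi(v,N) = 0$. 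Summing over all $N \in \Part(\mu)$ then gives the total count $\binom{v + \mu - 1}{\mu} = \sum_{N \in \Part(\mu)} \Xi(v,N)$, which is the assertion.

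I do not expect a genuine obstacle here; the only thing requiring care is the bookkeeping that translates the block data $0 = k_0^{} < k_1^{} < \dots < k_l^{} = n$ of Notation~\ref{Xi} into the language of multiplicities of distinct parts, together with the verification that the degenerate range $n > v$ contributes nothing to either side. If a more formal packaging is preferred, the same computation can be read off from the generating-function identity $\sum_{N \in \Part(\mu)} \Xi(v,N)\, x^{|N|} = (1 - x)^{-v}$, proved by grouping the $v$ independent ``slots'' $f \colon \{ 1, \dots, v \} \to \ZZ_{\ge 0}$ by the multiset of their positive values and extracting the coefficient of $x^\mu$. Alternatively one could argue by induction on $\mu$ via the Pascal-type recursion for multinomial coefficients, but the bijective route is shorter and more transparent.
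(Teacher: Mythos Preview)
Your proposal is correct and is essentially the same bijective argument as the paper's proof: the paper phrases it in terms of $\mu$-combinations with repetitions of $v$, you phrase it in terms of weak compositions $(c_1^{},\dots,c_v^{})$ of $\mu$, and these two objects are in canonical bijection via $c_i^{} = $ multiplicity of $i$. In both cases one classifies such objects by the partition $N$ of $\mu$ formed by the positive multiplicities and shows that each class has size $\binom{v}{m_1^{},\dots,m_l^{},v-n} = \Xi(v,N)$.
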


\begin{proof}
Any $\mu$-combination with repetitions of $v$ gives rise to a partition $N \in \Part(\mu)$ as follows.
We represent the combination by the string $(o_1^{}, \dots, o_\mu^{})$.
We write $\{ o_1^{}, \dots, o_\mu^{} \} = \{ \bar{o}_1^{}, \dots, \bar{o}_n^{} \}$ with distinct $\bar{o}_i^{}$.
We take $N = \{ \nu_1^{}, \dots, \nu_n^{} \}$, where $\nu_i^{}$ is the number of times $\bar{o}_i^{}$ appears in the string.
Let us fix $N \in \Part(\mu)$ as in Notation~\ref{Xi}.
Each $\mu$-combination with repetitions of $v$ associated to $N$ is represented by a string of the form
\[
o = (\underbrace{o_{1_{}}^{}, \dots, o_1^{}}_{\nu_1},
\underbrace{o_{2_{}}^{}, \dots, o_2^{}}_{\nu_2}, \dots,
\underbrace{o_{k_l}^{}, \dots, o_{k_l}^{}}_{\nu_{k_l}}).
\]
There are $v! / (v - k_l^{})!$ such strings.
Two strings $o$ and $o'$ represent the same $\mu$-combination with repetitions of $v$ precisely if, for all indices $1 \le j \le l$,
\[
\{ o_{k_{j - 1} + 1}^{}, \dots, o_{k_j}^{} \} = \{ o'_{k_{j - 1} + 1}, \dots, o'_{k_j} \}.
\]
Thus, the equivalence class of $o$ has $(k_1^{})! (k_2^{} - k_1^{})! \dots (k_l^{} - k_{l - 1}^{})!$ elements.
It follows that the number of $\mu$-combinations with repetitions of $v$ associated to $N$ is
\[
\frac{v! / (v - k_l^{})!}{(k_1^{})! (k_2^{} - k_1^{})! \dots (k_l^{} - k_{l - 1}^{})!}
= \binom{v}{k_1^{}, \, k_2^{} - k_1^{}, \, \dots, \, k_l^{} - k_{l - 1}^{}, \, v - k_l^{}} = \Xi(v, N).
\]
The summation on the l.h.s.\ is the total number of $\mu$-combinations with repetitions of $v$.
\end{proof}

\begin{remark}
\label{discriminant}
Let $\mu$ be a positive integer.
Let $M = \{ \mu_1^{}, \dots, \mu_m^{} \}$ be a partition of $\mu$.
Consider the discriminant locus
\[
D_M^{} = \Big\{ \sum_{1 \le i \le m} \mu_i^{} L_i^{} \Bigm|
L_i^{} \subset \PP^1 \times \PP^1 \ \text{are mutually distinct lines of degree} \ (0, 1) \Big\} \subset \left| \Osh(0, \mu) \right|.
\]
Consider the open subvariety
\[
D_m^{} = \{ (p_1^{}, \dots, p_m^{}) \mid p_i^{} \in \PP^1 \ \text{are mutually distinct points} \} \subset (\PP^1)^m.
\]
Given $p = (p_1^{}, \dots, p_m^{}) \in D_m^{}$, consider the lines $L_i^{} = \PP^1 \times \{ p_i^{} \} \subset \PP^1 \times \PP^1$.
The map
\[
D_m^{} \lra D_M^{} \quad \text{given by} \quad p \longmapsto \sum_{1 \le i \le m} \mu_i^{} L_i^{}
\]
is a geometric quotient modulo the action of a subgroup  $\GM$ of the group of permutations of $m$ elements.
If $m \ge 3$, then $\eulertop(D_m^{}) = 0$, hence $\eulertop(D_M^{}) = 0$.
When $m = 2$ notice that
\[
D_{\{ \mu_1, \mu_2 \}}^{} \simeq
\begin{cases}
\PP^1 \times \PP^1 \smallsetminus \{ \text{diagonal} \} & \text{if} \ \mu_1^{} > \mu_2^{}, \\
\PP^2 \smallsetminus \{ \text{smooth conic} \} & \text{if} \ \mu_1^{} = \mu_2^{},
\end{cases}
\quad \text{hence} \quad
\eulertop(D_{\{ \mu_1, \mu_2 \}}^{}) =
\begin{cases}
2 & \text{if} \ \mu_1^{} > \mu_2^{}, \\
1 & \text{if} \ \mu_1^{} = \mu_2^{}.
\end{cases}
\]
When $m = 1$ notice that that $D_{\{ \mu \}}^{} \simeq \PP^1$, hence $\eulertop(D_{\{ \mu \}}^{}) = 2$.
\end{remark}

\begin{proposition}
\label{Xi_discriminant}
Let $\mu$ and $v$ be positive integers.
Then we have the equation
\[
\sum_{M = \{ \mu_1, \dots, \mu_m \} \in \Part(\mu)}
\eulertop(D_M^{}) \Big( \sum_{N_1 \in \Part(\mu_1)} \dots \sum_{N_m \in \Part(\mu_m)} \prod_{1 \le i \le m} \Xi(v, N_i^{}) \Big)
= \binom{2v + \mu - 1}{\mu}.
\]
\end{proposition}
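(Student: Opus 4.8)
The plan is to peel the left-hand side down to the classical Vandermonde convolution
\[
\sum_{j = 0}^{\mu} \binom{v + j - 1}{j} \binom{v + \mu - j - 1}{\mu - j} = \binom{2 v + \mu - 1}{\mu},
\]
which itself is immediate upon comparing the coefficients of $x^\mu$ in the two sides of $(1 - x)^{-v} (1 - x)^{-v} = (1 - x)^{-2 v}$, using $(1 - x)^{-n} = \sum_{k \ge 0} \binom{n + k - 1}{k} x^k$.

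First I would apply Proposition~\ref{Xi_combinations} to each of the inner summations. Since $\sum_{N_i \in \Part(\mu_i)} \Xi(v, N_i) = \binom{v + \mu_i - 1}{\mu_i}$ and $\prod_{1 \le i \le m} \Xi(v, N_i)$ factors over the index $i$, the bracketed multiple sum attached to a partition $M = \{ \mu_1, \dots, \mu_m \}$ collapses to $\prod_{1 \le i \le m} \binom{v + \mu_i - 1}{\mu_i}$. Hence the left-hand side becomes $\sum_{M \in \Part(\mu)} \eulertop(D_M) \prod_{1 \le i \le m} \binom{v + \mu_i - 1}{\mu_i}$.

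Next I would substitute the values of $\eulertop(D_M)$ recorded in Remark~\ref{discriminant}. Partitions into $m \ge 3$ parts drop out because $\eulertop(D_M) = 0$ there, so only the one-part and two-part partitions survive: $\{ \mu \}$ with weight $2$; $\{ \mu_1, \mu_2 \}$ with $\mu_1 > \mu_2 > 0$ with weight $2$; and, when $\mu$ is even, $\{ \mu/2, \mu/2 \}$ with weight $1$. The decisive bookkeeping step is then to read this weighted sum over unordered partitions with at most two parts as the unrestricted sum over ordered pairs $(j, \mu - j)$, $0 \le j \le \mu$. The two boundary pairs $(0, \mu)$ and $(\mu, 0)$ correspond to $\{ \mu \}$ and, using $\binom{v - 1}{0} = 1$, each contributes $\binom{v + \mu - 1}{\mu}$, for a total of $2 \binom{v + \mu - 1}{\mu}$, matching the weight-$2$ term of $\{ \mu \}$; an interior pair with $j \ne \mu - j$ together with its transpose correspond to the partition $\{ \mu_1, \mu_2 \}$ with $\mu_1 > \mu_2 > 0$, and since the summand $\binom{v + j - 1}{j} \binom{v + \mu - j - 1}{\mu - j}$ is symmetric under transposition, the two pairs jointly contribute $2 \binom{v + \mu_1 - 1}{\mu_1} \binom{v + \mu_2 - 1}{\mu_2}$; and the diagonal pair $(\mu/2, \mu/2)$, when it exists, contributes $\binom{v + \mu/2 - 1}{\mu/2}^2$. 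In each case the weight $\eulertop(D_M)$ is exactly the number of ordered pairs lying over $M$, so the left-hand side equals $\sum_{j = 0}^{\mu} \binom{v + j - 1}{j} \binom{v + \mu - j - 1}{\mu - j}$, and the convolution identity above closes the argument.

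I do not anticipate a genuine obstacle here: the one point demanding care is the identification of the two boundary ordered pairs $(0, \mu)$, $(\mu, 0)$ with the single one-part partition $\{ \mu \}$, together with the convention $\binom{v - 1}{0} = 1$; beyond that the argument is the routine passage between ordered tuples and unordered partitions, with $\eulertop(D_M)$ supplying precisely the multiplicities dictated by Remark~\ref{discriminant}.
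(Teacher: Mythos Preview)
Your proposal is correct and follows essentially the same route as the paper: collapse the inner sums via Proposition~\ref{Xi_combinations}, insert the Euler characteristics from Remark~\ref{discriminant} so that only one- and two-part partitions survive, reinterpret the weighted sum over unordered partitions as the sum over ordered pairs $(j,\mu-j)$ with $0\le j\le\mu$, and finish with the Vandermonde-type convolution $\sum_j\binom{v+j-1}{j}\binom{v+\mu-j-1}{\mu-j}=\binom{2v+\mu-1}{\mu}$. The paper does the ordered-pair bookkeeping by splitting into the cases $\mu$ odd and $\mu$ even before rewriting the result uniformly, whereas you handle the boundary terms $(0,\mu)$, $(\mu,0)$ and the diagonal $(\mu/2,\mu/2)$ directly; the substance is identical.
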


\begin{proof}
For convenience, let us denote the l.h.s.\ by $\Xi(v, \mu)$.
Using Proposition~\ref{Xi_combinations}, we calculate:
\begin{align*}
\Xi(v, \mu) & = \sum_{M = \{ \mu_1, \dots, \mu_m \} \in \Part(\mu)}
\Big( \eulertop(D_M^{}) \prod_{1 \le i \le m} \Big( \sum_{N_i \in \Part(\mu_i)} \Xi(v, N_i^{}) \Big) \Big) \\
& = \sum_{M = \{ \mu_1, \dots, \mu_m \} \in \Part(\mu)}
\Big( \eulertop(D_M^{}) \prod_{1 \le i \le m} \rep{v}{\mu_i^{}} \Big).
\end{align*}
Substituting the formulas for $\eulertop(D_M^{})$ found at Remark~\ref{discriminant},
we obtain the following formulas:
\begin{align*}
\Xi(v, 2\mu + 1) & = 2 \rep{v}{2\mu + 1} + 2 \sum_{\mu_1 = \mu + 1}^{2\mu} \rep{v}{\mu_1^{}} \rep{v}{2\mu + 1 - \mu_1^{}}, \\
\Xi(v, 2\mu) \phantom{{} + 1}& = 2 \rep{v}{2\mu} + \rep{v}{\mu}^2 + 2 \sum_{\mu_1 = \mu + 1}^{2\mu - 1} \rep{v}{\mu_1^{}} \rep{v}{2\mu - \mu_1^{}}.
\end{align*}
These formulas can be written more compactly as
\[
\Xi(v, \mu) = 2\rep{v}{\mu} + \sum_{\nu = 1}^{\mu - 1} \rep{v}{\nu} \rep{v}{\mu - \nu}
= \sum_{\nu = 0}^{\mu} \rep{v}{\nu} \rep{v}{\mu - \nu}
= \rep{2v}{\mu}. \qedhere
\]
\end{proof}

\begin{theorem}
\label{euler_loci}
Let $\alpha$ be a singular value relative to the polynomial $P(x_1^{}, x_2^{}) = r x_1^{} + s x_2^{} + t$.
Assume that, for some integers $d$ and $r'$, where $0 \le r' < r$, $\M^\alpha((s, r), t)^\pss$ contains the space
\[
Y = \M^\alpha((s, r'), \, t - d(r - r'))^\st \times \M((0, r - r'), \, d(r - r')).
\]
\begin{enumerate}
\item[\emph{(i)}]
Assume, in addition, that $d \ge 0$.
Then we have the equation
\[
\eulertop(\fiberR Y) = \binom{2d + 2s + r - r' - 1}{r - r'} \eulertop(\M^\alpha((s, r'), \, t - d(r - r'))^\st).
\]
\item[\emph{(ii)}]
Assume, in addition, that $d \le 2$.
Then we have the equation
\[
\eulertop(\fiberL Y) = \phantom{2d + {}} \binom{2s + r - r' - 1}{r - r'} \eulertop(\M^\alpha((s, r'), \, t - d(r - r'))^\st).
\]
\end{enumerate}
\end{theorem}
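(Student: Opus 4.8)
The plan is to stratify $Y$ along its second factor by discriminant type and to apply Theorem~\ref{euler_fibers} fibrewise. Set $\mu = r - r'$ and abbreviate $Z = \M^\alpha((s, r'), \, t - d(r - r'))^\st$. By Proposition~\ref{vartheta} the second factor $\M((0, \mu), \, d\mu)$ is isomorphic to the linear system $\left| \Osh(0, \mu) \right|$, and every effective divisor of degree $(0, \mu)$ on $\PP^1 \times \PP^1$ decomposes uniquely as $\sum \mu_i^{} L_i^{}$ with distinct lines $L_i^{}$ of degree $(0, 1)$; hence $\left| \Osh(0, \mu) \right| = \bigsqcup_{M \in \Part(\mu)} D_M^{}$, the discriminant stratification of Remark~\ref{discriminant}. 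Transporting this decomposition along $\vartheta$ and taking the product with $Z$ yields a finite decomposition into locally closed subschemes $Y = \bigsqcup_{M \in \Part(\mu)} Y_M^{}$, where $Y_M^{} = Z \times D_M^{}$, and correspondingly $\fiberR Y = \bigsqcup_M (\fiberR Y)|_{Y_M}^{}$, with $(\fiberR Y)|_{Y_M}^{}$ the preimage of $Y_M^{}$ under the structure morphism $\fiberR Y \to Y$.

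First I would compute the Euler characteristic of the fibre of $\fiberR Y \to Y$ over a closed point $(\langle \Lambda' \rangle, \langle \Theta \rangle) \in Y_M^{}$. Because the cartesian squares defining right fibres are compatible with base change along the inclusion of this point into $Y$, that fibre is, up to nilpotents which do not affect $\eulertop$, the right fibre $\fiberR(\langle \Lambda' \rangle, \langle \Theta \rangle)$ of Theorem~\ref{euler_fibers}. Writing $M = \{ \mu_1^{}, \dots, \mu_m^{} \}$ and $\langle \Theta \rangle = \langle \bigoplus_{1 \le i \le m} \mu_i^{} \Osh_{L_i}^{}(d - 1, 0) \rangle$, Theorem~\ref{euler_fibers}(i) gives
\[
\eulertop(\fiberR(\langle \Lambda' \rangle, \langle \Theta \rangle))
= \sum_{N_1 \in \Part(\mu_1)} \dots \sum_{N_m \in \Part(\mu_m)} \prod_{1 \le i \le m} \Xi_\Rt^{}(\Lambda', L_i^{}, d - 1, N_i^{}).
\]
Under the hypothesis $d \ge 0$, Proposition~\ref{Xi_simple}(i) evaluates each factor as $\Xi_\Rt^{}(\Lambda', L_i^{}, d - 1, N_i^{}) = \Xi(d + s, N_i^{})$, which depends only on $N_i^{}$, not on $\Lambda'$ or $L_i^{}$. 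Hence the right-hand side is a constant $c_M^{}$ depending only on $M$.

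Then I would apply Lemma~\ref{multiplicative} to the morphism $(\fiberR Y)|_{Y_M}^{} \to Y_M^{}$, all of whose closed fibres have Euler characteristic $c_M^{}$; this gives $\eulertop((\fiberR Y)|_{Y_M}^{}) = c_M^{} \eulertop(Y_M^{}) = c_M^{} \eulertop(Z) \eulertop(D_M^{})$, using the multiplicativity of $\eulertop$ on the product $Z \times D_M^{}$. Summing over $M$ by additivity of $\eulertop$,
\[
\eulertop(\fiberR Y) = \eulertop(Z) \sum_{M = \{ \mu_1, \dots, \mu_m \} \in \Part(\mu)} \eulertop(D_M^{}) \Big( \sum_{N_1 \in \Part(\mu_1)} \dots \sum_{N_m \in \Part(\mu_m)} \prod_{1 \le i \le m} \Xi(d + s, N_i^{}) \Big).
\]
By Proposition~\ref{Xi_discriminant} applied with $v = d + s$, the sum over $M$ equals $\binom{2(d + s) + \mu - 1}{\mu} = \binom{2d + 2s + r - r' - 1}{r - r'}$, which is the asserted formula in (i). Part (ii) is proved in exactly the same way, using Theorem~\ref{euler_fibers}(ii) and $\fiberL$ in place of Theorem~\ref{euler_fibers}(i) and $\fiberR$, Proposition~\ref{Xi_simple}(ii) in place of Proposition~\ref{Xi_simple}(i) (legitimate since now $d \le 2$), and $v = s$ in Proposition~\ref{Xi_discriminant}.

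The main obstacle is the bookkeeping in the first two steps: verifying that the fibre of $\fiberR Y \to Y$ over a point of $Y_M^{}$ is, after reduction, precisely the right fibre produced by Theorem~\ref{euler_fibers} (this is where compatibility of the cartesian diagrams with base change and the passage to reduced structures are used), and that transporting the discriminant stratification through $\vartheta$ gives a genuine finite decomposition of $Y$ into locally closed subschemes to which additivity of $\eulertop$ applies. Once the fibrewise Euler characteristic has been recognised as constant on each $Y_M^{}$, the rest is the combinatorial bookkeeping already packaged in Propositions~\ref{Xi_simple} and \ref{Xi_discriminant}.
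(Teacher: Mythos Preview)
Your proof is correct and follows essentially the same approach as the paper: stratify $Y$ by the discriminant type of the second factor via Proposition~\ref{vartheta}, use Theorem~\ref{euler_fibers} and Proposition~\ref{Xi_simple} to see that the fibrewise Euler characteristic is constant on each stratum $Y_M$, apply Lemma~\ref{multiplicative}, sum, and finish with Proposition~\ref{Xi_discriminant}. The only difference is that you spell out slightly more of the base-change bookkeeping than the paper does.
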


\begin{proof}
Write $\M^\st = \M^\alpha((s, r'), \, t - d(r - r'))^\st$.
Recall the isomorphism $\vartheta$ from Proposition~\ref{vartheta}.
Given a partition $M = \{ \mu_1^{}, \dots, \mu_m^{} \} \in \Part(r - r')$, write $Y_M^{} = \M^\st \times \vartheta(D_M^{})$.
According to Theorem~\ref{euler_fibers}(i) and Proposition~\ref{Xi_simple}(i),
for any closed point $(\langle \Lambda' \rangle, \langle \Theta \rangle) \in Y_M^{}$ we have the equation
\begin{align*}
\eulertop(\fiberR(\langle \Lambda' \rangle, \langle \Theta \rangle))
= & \sum_{N_1 \in \Part(\mu_1)} \dots \sum_{N_m \in \Part(\mu_m)} \prod_{1 \le i \le m} \Xi(d + s, N_i^{}). \\
\intertext{This expression remains constant as the closed point varies in $Y_M^{}$,
so we may apply Lemma~\ref{multiplicative} to obtain the formula}
\eulertop(\fiberR Y_M^{})
= \eulertop(Y_M^{}) & \sum_{N_1 \in \Part(\mu_1)} \dots \sum_{N_m \in \Part(\mu_m)} \prod_{1 \le i \le m} \Xi(d + s, N_i^{}).
\end{align*}
This leads us to the equation
\[
\eulertop(\fiberR Y) = \eulertop(\M^\st) \sum_{M \in \Part(r - r')}
\eulertop(D_M^{}) \Big(\sum_{N_1 \in \Part(\mu_1)} \dots \sum_{N_m \in \Part(\mu_m)} \prod_{1 \le i \le m} \Xi(d + s, N_i^{})\Big).
\]
Employing Theorem~\ref{euler_fibers}(ii) and Proposition~\ref{Xi_simple}(ii),
we obtain a formula for $\eulertop(\fiberL Y)$ like the one above, but with $d + s$ replaced by $s$.
The theorem now follows from Proposition~\ref{Xi_discriminant}.
\end{proof}

\section{Homomorphisms of pure sheaves}
\label{homomorphisms}

\noindent
The computation of $\eulertop(\fiberL Y)$ from Theorem~\ref{euler_loci}(ii) is insufficient for our purposes.
In this section we shall further examine the expression
$\eulertop(\fiberL(\langle \Lambda' \rangle, \langle \Theta \rangle))$
(notation as in Theorem~\ref{euler_fibers}).
We work on $X = \PP^1 \times \PP^1$.
We restrict our attention to the case when $\Theta$ is stable,
that is, $\Theta = \Osh_L^{}(d - 1, 0)$.
We write $\Lambda' = (\Gamma', \F')$.
According to Proposition~\ref{fibers_stable}(ii) and Proposition~\ref{ext_surface}(ii),
\[
\eulertop(\fiberL(\langle \Lambda' \rangle, \langle \Theta \rangle))
= \ext^1(\Theta, \Lambda')
= \langle (s, r'), (0, 1) \rangle + \hom(\F', \Osh_L^{}(d - 3, 0))
= s + \hom(\F', \Osh_L^{}(d - 3, 0)).
\]
Thus, our task is to determine the dimension of $\Hom(\F', \Osh_L^{}(d - 3, 0))$.
We do this only in the case when $\Lambda'$ is $\infty$-stable.
As per Proposition~\ref{M_infinity}, in this case $\F'$ can be explicitly described.

Let $r$, $s$ and $l$ be non-negative integers.
Consider the flag Hilbert scheme $\Hilb((s, r), l)$ parametrizing pairs $(C, Z)$,
where $C \subset X$ is a curve given by a polynomial of degree $(s, r)$
and $Z \subset C$ is a finite scheme of length $l$.
According to \cite[Lemma 2.2]{advances}, there exists an extension
\begin{equation}
\label{O_C_Z}
0 \lra \Osh_C^{} \lra \F \lra \Extsh_{\Osh_X}^2(\Osh_Z^{}, \Osh_X^{}) \lra 0
\end{equation}
such that $\F$ is pure.
Moreover, $\F$ is unique up to an isomorphism.
We write $\F = \Osh_C^{}(Z)$.
The following proposition is a straightforward consequence of \cite[Proposition B.8]{pandharipande_thomas}.

\begin{proposition}
\label{M_infinity}
Let $r > 0$, $s > 0$ and $t \ge r + s - r s$ be integers.
Then there is an isomorphism
\[
\Hilb((s, r), \, t - r - s + r s) \lra \M^\infty((s, r), t) \quad \text{given by} \quad (C, Z) \longmapsto \langle (\H^0(\Osh_C^{}), \Osh_C^{}(Z)) \rangle.
\]
\end{proposition}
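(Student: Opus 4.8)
The plan is to deduce the statement from \cite[Proposition B.8]{pandharipande_thomas} together with the standard identification of $\infty$-stable coherent systems of order one with Pandharipande--Thomas stable pairs. First I would recall that a coherent system $\Lambda = (\Gamma, \F)$ with $\dim_\CC^{} \Gamma = 1$ is $\infty$-stable precisely when $\F$ is pure of dimension one and the evaluation morphism $\Osh_X^{} \tensor \Gamma \to \F$ has zero-dimensional cokernel; choosing a generator $s$ of $\Gamma$, this says exactly that $(\F, \, s \colon \Osh_X^{} \to \F)$ is a stable pair in the sense of \cite{pandharipande_thomas}. This equivalence is a direct consequence of the description of $\alpha$-stability for $\alpha \gg 0$, and it identifies $\M^\infty((s, r), t)$ with the moduli space of stable pairs on $\PP^1 \times \PP^1$ with $\chern(\F) = (s, r)$ and $\euler(\F) = t$.

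Next I would invoke \cite[Proposition B.8]{pandharipande_thomas}: on a smooth projective surface the moduli space of stable pairs with one-dimensional support of a fixed class is isomorphic to the flag Hilbert scheme of pairs $(C, Z)$, where $C$ is a curve in that class and $Z \subset C$ is a finite subscheme, the sheaf attached to $(C, Z)$ being the unique pure sheaf $\Osh_C^{}(Z)$ fitting into the extension~\eqref{O_C_Z} and the section being the composite $\Osh_X^{} \twoheadrightarrow \Osh_C^{} \hookrightarrow \F$. On $X = \PP^1 \times \PP^1$ a closed subscheme with Hilbert polynomial $\Poly_{\Osh_X^{}(s, r)}^{}$ is necessarily an effective divisor of bidegree $(s, r)$ carrying no embedded or isolated points, so the base of this flag Hilbert scheme is precisely $\Hilb((s, r), l)$ for a suitable $l$.

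It remains to determine $l$ and to rewrite $\Gamma$ as $\H^0(\Osh_C^{})$. Adjunction on $\PP^1 \times \PP^1$ gives $\euler(\Osh_C^{}) = r + s - r s$ for $C$ of bidegree $(s, r)$, while $\Extsh_{\Osh_X^{}}^2(\Osh_Z^{}, \Osh_X^{})$ has length $\length(Z)$; hence \eqref{O_C_Z} yields $\euler(\F) = r + s - r s + \length(Z)$. Equating with $\euler(\F) = t$ forces $\length(Z) = t - r - s + r s$, which is non-negative exactly under the hypothesis $t \ge r + s - r s$, and this is the value $l$ in the statement. Finally, the structure sequence $0 \to \Osh_X^{}(-s, -r) \to \Osh_X^{} \to \Osh_C^{} \to 0$ together with $\H^1(\Osh_X^{}(-s, -r)) = 0$ for $s, r \ge 1$ (K\"unneth) gives $\H^0(\Osh_C^{}) \simeq \CC$, so the line $\Gamma$ spanned by the section of the stable pair coincides with $\H^0(\Osh_C^{})$ inside $\H^0(\F)$; this produces the asserted formula $(C, Z) \mapsto \langle (\H^0(\Osh_C^{}), \Osh_C^{}(Z)) \rangle$.

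The hard part will not be any single computation but the bookkeeping in the dictionary: checking that $\infty$-stability of the coherent system matches the stable-pair condition precisely, and that the flag Hilbert scheme produced by \cite[Proposition B.8]{pandharipande_thomas} is the one written $\Hilb((s, r), l)$ here, with the sheaf $\Osh_C^{}(Z)$ of \eqref{O_C_Z} rather than some twist or dual of it. Once those identifications are in place, the isomorphism of moduli spaces is inherited from \cite{pandharipande_thomas}, and only the numerical normalization of $l$ remains, which is the Euler-characteristic count above.
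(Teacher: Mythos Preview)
Your proposal is correct and follows exactly the approach the paper indicates: the paper states only that the proposition is a straightforward consequence of \cite[Proposition B.8]{pandharipande_thomas} and gives no further argument, so your expanded dictionary between $\infty$-stable coherent systems and stable pairs, together with the Euler-characteristic computation fixing $l = t - r - s + rs$ and the identification $\H^0(\Osh_C^{}) \simeq \CC$, is precisely the bookkeeping the paper leaves implicit.
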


\begin{proposition}
\label{hom_pure}
Let $r \ge 0$, $s > 0$, $l \ge 0$ and $d$ be integers.
Consider $(C, Z) \in \Hilb((s, r), l)$.
Consider a line $L \subset X$ of degree $(0, 1)$.
\begin{enumerate}
\item[\emph{(i)}]
If $L \nsubseteq C$, then $\Hom(\Osh_C^{}(Z), \Osh_L^{}(d, 0)) = \{ 0 \}$.
\item[\emph{(ii)}]
We assume that $C = C' \cup L$, where $C'$ is a curve given by a polynomial of degree $(s, r - 1)$.
We claim that we have an exact sequence of the form
\[
0 \lra \Osh_L^{}(-s, 0) \lra \Osh_C^{} \lra \Osh_{C'}^{} \lra 0.
\]
We notice that the image of the composite morphism $\Osh_L^{}(-s, 0) \to \Osh_C^{} \to \Osh_Z^{}$
is the structure sheaf of a subscheme $Z_0^{} \subset L$ of length $k$.
We claim that
\[
\hom(\Osh_C^{}(Z), \Osh_L^{}(d, 0)) = \max \{ 0, \, d - k + 1 \}.
\]
\end{enumerate}
\end{proposition}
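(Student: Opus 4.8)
\noindent
\emph{Proof proposal.}
For part~(i) the plan is to apply $\Hom_{\Osh_X}^{}(-, \Osh_L^{}(d, 0))$ to the defining extension~\eqref{O_C_Z} of $\Osh_C^{}(Z)$. Since $\Extsh_{\Osh_X}^2(\Osh_Z^{}, \Osh_X^{})$ has dimension zero while $\Osh_L^{}(d, 0)$ is pure of dimension one, there are no non-zero homomorphisms between them, so it is enough to show $\Hom(\Osh_C^{}, \Osh_L^{}(d, 0)) = \{ 0 \}$. The image of any such homomorphism is a quotient of $\Osh_C^{}$ annihilated by $\I_L^{}$, hence a quotient of $\Osh_C^{} \tensor_{\Osh_X}^{} \Osh_L^{}$, which is supported on $C \cap L$; when $L \nsubseteq C$ this intersection is zero-dimensional, so the image is a zero-dimensional subsheaf of the pure sheaf $\Osh_L^{}(d, 0)$, and therefore vanishes.

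For part~(ii) I first exhibit the exact sequence. Writing $f = f' \ell$, where $\ell \in \H^0(\Osh_X^{}(0, 1))$ defines $L$ and $f' \in \H^0(\Osh_X^{}(s, r - 1))$ defines the residual line $C'$ (so $L \nsubseteq C'$), multiplication by $f'$ induces $\Osh_X^{}(-s, -(r - 1)) \to \Osh_X^{} \to \Osh_C^{}$ whose kernel is $\ell \cdot \Osh_X^{}(-s, -(r - 1))$, because $X$ is integral and $g f' \in (f' \ell)$ forces $g \in (\ell)$; the cokernel is $\Osh_X^{}/(f') = \Osh_{C'}^{}$, and $\Osh_X^{}(-s, -(r - 1)) \tensor_{\Osh_X}^{} \Osh_L^{} = \Osh_L^{}(-s, 0)$, which yields the claimed sequence.

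Next I reduce $\hom(\Osh_C^{}(Z), \Osh_L^{}(d, 0))$ to a computation on $L \simeq \PP^1$. Every homomorphism $\Osh_C^{}(Z) \to \Osh_L^{}(d, 0)$ factors through $\Osh_C^{}(Z) \tensor_{\Osh_X}^{} \Osh_L^{}$ and then through its maximal torsion-free quotient; since $L$ is a multiplicity-one component of the support $C$ of $\Osh_C^{}(Z)$, that quotient is an invertible sheaf $\Osh_L^{}(e, 0)$ on $\PP^1$, so
\[
\hom(\Osh_C^{}(Z), \Osh_L^{}(d, 0)) = \hom_{\PP^1}^{}(\Osh_{\PP^1}^{}(e), \Osh_{\PP^1}^{}(d)) = \max \{ 0, \, d - e + 1 \},
\]
and it remains to prove $e = k$. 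For this I will use that $\Osh_C^{} \tensor_{\Osh_X}^{} \Osh_L^{} \simeq \Osh_L^{}$ (tensor $0 \to \Osh_X^{}(-s, -r) \xrightarrow{f} \Osh_X^{} \to \Osh_C^{} \to 0$ by $\Osh_L^{}$ and note $f|_L = (f' \ell)|_L = 0$) and that $\Osh_L^{} = \Osh_C^{} \tensor_{\Osh_X}^{} \Osh_L^{} \to \Osh_C^{}(Z) \tensor_{\Osh_X}^{} \Osh_L^{}$ is injective, since its kernel — the image of a zero-dimensional $\operatorname{Tor}$-sheaf — lies in the torsion-free sheaf $\Osh_L^{}$. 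Thus $\Osh_L^{} \hookrightarrow \Osh_L^{}(e, 0)$ has a cokernel of length $e$, and $e$ is the sum over closed points $p \in L$ of the local colengths. I will compute these locally: if $p \notin C'$ the curve $C$ is locally the smooth line $L$, so $\Osh_C^{}(Z)$ is there the invertible twist of $\Osh_L^{}$ by the local part of $Z$ and the colength equals that length; and since $f'$ is a unit near such $p$, the map $\Osh_L^{}(-s, 0) \xrightarrow{\cdot f'} \Osh_C^{}$ is onto near $p$, so this is also the local length of $Z_0^{}$. At the finitely many points of $C' \cap L$ I will read the local colength from the localizations of the two exact sequences above together with the local shape of $\Osh_C^{}(Z)$ furnished by~\eqref{O_C_Z}, and match it with the local length of the image of $\Osh_L^{}(-s, 0) \xrightarrow{\cdot f'} \Osh_C^{} \to \Osh_Z^{}$. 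Summing over $p$ gives $e = \length(\Osh_{Z_0}^{}) = k$.

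The main obstacle is this last local computation, at a point $p \in C' \cap L$ that also lies in $Z$: there $C$ has two crossing branches and $Z$ may carry an arbitrary finite scheme structure, so one must control the ``non-split'' length at $p$ of the induced sequence $0 \to \Osh_L^{} \to \Osh_C^{}(Z) \tensor_{\Osh_X}^{} \Osh_L^{} \to \Extsh_{\Osh_X}^2(\Osh_Z^{}, \Osh_X^{}) \tensor_{\Osh_X}^{} \Osh_L^{} \to 0$ and compare it with the part of $\Osh_{Z, p}^{}$ not annihilated by $f'$. Sample cases — $Z$ reduced, or $Z$ of length two along $L$ through the node — confirm the equality, but a uniform argument will need an explicit local model for $\Osh_C^{}(Z)$ at a singular point of $C$, which is where the bulk of the work lies.
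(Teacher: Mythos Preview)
Your treatment of part~(i) and of the short exact sequence in part~(ii) is correct and essentially the same as the paper's.

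For the main computation in part~(ii), however, your direct-restriction approach leaves a genuine gap, as you yourself acknowledge: you reduce to proving $e = k$, but the local comparison at points $p \in C' \cap L \cap Z$ is not carried out, and you note that it would require an explicit local model of $\Osh_C^{}(Z)$ at a singular point of $C$. There is also a slip earlier: the parenthetical ``so $L \nsubseteq C'$'' does not follow from $f = f' \ell$; the proposition allows $L$ to occur in $C$ with multiplicity $m \ge 2$, and indeed the paper writes $C = C'' \cup m L$ with $C''$ not containing $L$ precisely to handle this. Your argument that the torsion-free quotient of $\Osh_C^{}(Z) \tensor \Osh_L^{}$ is a line bundle, and the local analysis at points $p \notin C'$, both implicitly use $m = 1$.

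The paper sidesteps all of this by dualising. For a pure one-dimensional sheaf $\F$ on $X$ set $\F^\dual = \Extsh_{\Osh_X}^1(\F, \omega_X^{})$; reflexivity gives $\Hom(\F, \G) \simeq \Hom(\G^\dual, \F^\dual)$, and one has $\Osh_C^{}(Z)^\dual \simeq \I_{Z, C}^{} \tensor \omega_C^{}$. This converts the problem into computing $\Hom(\Osh_L^{}(-d, 0), \, \I_{Z, C}^{}(s, 0))$. Now one works with the \emph{ideal sheaf} $\I_{Z, C}^{}$ rather than with $\Osh_C^{}(Z)$, and no local model at singular points is needed: any morphism $\Osh_L^{}(-d, 0) \to \Osh_C^{}(s, 0)$ must land in the kernel $\Osh_{m L}^{}$ of $\Osh_C^{}(s, 0) \to \Osh_{C''}^{}(s, 0)$, and in fact in its $\zeta$-annihilator, which is identified with the copy of $\Osh_L^{}$ coming from the $C'$-sequence. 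The further condition of landing in $\I_{Z, C}^{}(s, 0)$ cuts this down to $\Hom(\Osh_L^{}(-d, 0), \, \Osh_L^{}(-k, 0)) = \H^0(\Osh_L^{}(d - k, 0))$, since by definition the composite $\Osh_L^{} \to \Osh_Z^{}$ has image of length $k$ and hence kernel $\Osh_L^{}(-k, 0)$.
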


\begin{proof}
(i) From the short exact sequence~\eqref{O_C_Z} we obtain the exact sequence
\[
\Hom(\Extsh_{\Osh_X}^2(\Osh_Z^{}, \Osh_X^{}), \, \Osh_L^{}(d, 0))
\lra \Hom(\Osh_C^{}(Z), \, \Osh_L^{}(d, 0)) \lra \Hom(\Osh_C^{}, \, \Osh_L^{}(d, 0)).
\]
The space on the left vanishes because $\Extsh_{\Osh_X}^2(\Osh_Z^{}, \Osh_X^{})$ has finite support.
If $L \nsubseteq C$, then the space on the right also vanishes, forcing the middle space to vanish, as well.

\medskip

\noindent
(ii) The kernel of the quotient morphism $\Osh_C^{}(s, 0) \to \Osh_{C'}^{}(s, 0)$ is pure
and has Hilbert polynomial $P(x_1^{}, x_2^{}) = x_1^{} + 1$,
hence it is isomorphic to $\Osh_L^{}$.
Thus, we have an exact sequence as in the proposition.
Write $C = C'' \cup m L$, where $m \ge 1$ is an integer and $C''$ is a curve that does not contain $L$.
By the same argument, it is easy to show
that the kernel of the quotient morphism $\Osh_C^{}(s, 0) \to \Osh_{C''}^{}(s, 0)$ is isomorphic to $\Osh_{m L}^{}$.
We have the commutative diagram
\[
\xymatrix
{
0 \ar[r] & \Osh_L^{} \ar[r] \ar[d] & \Osh_C^{}(s, 0) \ar[r] \ar@{=}[d] & \Osh_{C'}^{}(s, 0) \ar[r] \ar[d] & 0 \\
0 \ar[r] & \Osh_{m L}^{} \ar[r] & \Osh_C^{}(s, 0) \ar[r] & \Osh_{C''}^{}(s, 0) \ar[r] & 0
}.
\]
Write $L = \PP^1 \times \{ p \}$ and choose a local parameter $\zeta$ of $\PP^1$ at $p$.
The image of the morphism $\Osh_L^{} \to \Osh_{m L}^{}$ is the annihilator of $\zeta$.

The \emph{dual} of a coherent sheaf $\F$ of dimension $1$ on $X$ is $\F^\dual = \Extsh_{\Osh_X}^1(\F, \omega_X^{})$.
According to \cite[Remark 4]{rendiconti}, any pure sheaf $\F$ of dimension $1$ on $X$ is reflexive,
i.e.\ the canonical morphism $\F \to \F^\ddual$ is an isomorphism.
Thus, if $\F$ and $\G$ are pure sheaves of dimension $1$ on $X$, then
\[
\Hom(\F, \G) \simeq \Hom(\G^\dual, \F^\dual).
\]
According to \cite[Equation 8]{advances}, $\Osh_C^{}(Z)^\dual \simeq \I_{Z, C}^{} \tensor \omega_C^{}$,
where $\I_{Z, C}^{} \subset \Osh_C^{}$ is the ideal sheaf of $Z$ in $C$
and $\omega_C^{} = \omega_X^{} \tensor \Osh_X^{}(C)|_C^{} \simeq \Osh_C^{}(s - 2, r - 2)$ is the dualising sheaf of $C$.
We deduce that
\[
\Hom(\Osh_C^{}(Z), \, \Osh_L^{}(d, 0)) \simeq \Hom(\Osh_L^{}(-d - 2, 0), \, \I_{Z, C}^{}(s - 2, r - 2))
\simeq \Hom(\Osh_L^{}(-d, 0), \, \I_{Z, C}^{}(s, 0)).
\]
Consider the commutative diagram with exact rows and columns:
\[
\xymatrix
{
& 0 \ar[d] & 0 \ar[d] & 0 \ar[d] \\
0 \ar[r] & \I \ar[r] \ar[d] & \I_{Z, C}^{}(s, 0) \ar[r] \ar[d] & \K \ar[d] \\
0 \ar[r] & \Osh_{m L}^{} \ar[r] \ar[d] & \Osh_C^{}(s, 0) \ar[r] \ar[d]  & \Osh_{C''}^{}(s, 0) \ar[r] \ar[d] & 0 \\
0 \ar[r] & \J \ar[r] & \Osh_Z^{} \ar[r] \ar[d] & \Osh_{Z \cap C''}^{} \ar[r] \ar[d] & 0 \\
& & 0 & 0
}.
\]
The space $\Hom(\Osh_L^{}(-d, 0), \, \Osh_{C''}^{}(s, 0))$ vanishes because $L \nsubseteq C''$,
hence also $\Hom(\Osh_L^{}(-d, 0), \K)$ vanishes.
From the first row of the diagram we obtain the isomorphism
\[
\Hom(\Osh_L^{}(-d, 0), \, \I_{Z, C}^{}(s, 0)) \simeq \Hom(\Osh_L^{}(-d, 0), \, \I).
\]
From the first column and the third row of the diagram we obtain the exact sequence
\[
0 \lra \Hom(\Osh_L^{}(-d, 0), \, \I) \lra \Hom(\Osh_L^{}(-d, 0), \, \Osh_{m L}^{}) \lra \Hom(\Osh_L^{}(-d, 0), \, \Osh_Z^{}).
\]
Any morphism $\Osh_L^{}(-d, 0) \to \Osh_{m L}^{}$ must factor through $\Osh_L^{}$ because $\zeta$ annihilates $\Osh_L^{}(-d, 0)$.
We obtain the exact sequence
\[
0 \lra \Hom(\Osh_L^{}(-d, 0), \, \I) \lra \Hom(\Osh_L^{}(-d, 0), \, \Osh_L^{}) \lra \Hom(\Osh_L^{}(-d, 0), \, \Osh_Z^{}).
\]
The kernel of the morphism $\Osh_L^{} \to \Osh_Z^{}$ is isomorphic to $\Osh_L^{}(-k, 0)$.
We conclude that
\[
\Hom(\Osh_C^{}(Z), \, \Osh_L^{}(d, 0)) \simeq \Hom(\Osh_L^{}(-d, 0), \, \Osh_L^{}(-k, 0)) \simeq \H^0(\Osh_L^{}(d - k, 0)). \qedhere
\]
\end{proof}

\begin{notation}
\label{nested}
Consider integers $r$, $s > 0$ and $l \ge k \ge 0$.
Consider the nested Hilbert scheme
\[
H \subset \Hilb((s, r), l) \times \Hilb((s, r - 1), \, l - k)
\]
parametrizing quadruples $(C, Z, C', Z')$ such that $Z \subset C$, $Z' \subset C'$, $C' \subset C$ and $Z' \subset Z$.
Consider the commutative diagram
\[
\xymatrix
{
\Osh_C^{} \ar[r]^-{\varphi} \ar[d] & \Osh_{C'}^{} \ar[d] \\
\Osh_Z^{} \ar[r]^-{\psi} & \Osh_{Z'}^{}
}.
\]
Let $\Hilb(k, (s, r), l) \subset H$ be the open subscheme given by the condition
that the induced morphism $\Kersh(\varphi) \to \Kersh(\psi)$ be surjective.
The set of closed points of $\Hilb(k, (s, r), l)$ can be identified with
\[
\{ (C, Z, L) \mid \deg C = (s, r), \ \length Z = l, \ Z \subset C, \ \deg L = (0, 1), \ L \subset C, \ \length Z_0^{} = k \}.
\]
\end{notation}

\begin{remark}
\label{k=0}
If $k = 0$, then $\Kersh(\psi) = \{ 0 \}$, hence $Z \subset C'$ and $\Hilb(0, (s, r), l) = H$.
Moreover,
\[
\Hilb(0, (s, r), l) \simeq \Hilb((s, r - 1), l) \times \left| \Osh(0, 1) \right|.
\]
\end{remark}

\begin{proposition}
\label{k=l}
We consider positive integers $r$, $s$ and $l$ such that $s + 1 \ge l$.
We claim that
\[
\eulertop(\Hilb(l, (s, r), l)) = 4.
\]
\end{proposition}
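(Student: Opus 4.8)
The plan is to realise $\Hilb(l, (s, r), l)$ as a space lying over the relative Hilbert scheme of $l$ points on the universal line of degree $(0, 1)$, to compute the topological Euler characteristic of each fibre, and then to single out the locus where the length-$l$ subscheme is supported at one point. First, I would spell out the description of closed points from Notation~\ref{nested} in the case $k = l$: a closed point is a triple $(C, Z, L)$, where $\deg C = (s, r)$, where $L$ is a line of degree $(0, 1)$ contained in $C$, and where $Z \subset C$ has length $l$ and satisfies $\length Z_0^{} = l$. Write $C = L \cup C'$ with $L = V(\ell)$ and $C' = V(g)$ for a form $g$ of degree $(s, r - 1)$. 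By Proposition~\ref{hom_pure}(ii) the kernel $\Kersh(\varphi) \cong \Osh_L(-s, 0)$ is locally free of rank one over $\Osh_L^{}$, and, after a local trivialisation, the composite $\Osh_L(-s, 0) \to \Osh_C \to \Osh_Z$ is multiplication by the image of $g|_L^{}$ followed by restriction to $Z$. Consequently $\length Z_0^{} = l$ forces $Z \subset L$, and then $\Osh_{Z_0}^{} = \Osh_Z$ — equivalently $\Kersh(\varphi) \to \Kersh(\psi)$ is surjective — precisely when $g|_L^{}$ is a unit in $\Osh_Z$, i.e.\ when $g$ vanishes at no point of the support of $Z$ (so in particular $L \nsubseteq C'$). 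Conversely every such triple arises, taking for $C'$ the curve residual to $L$ in $C$ and $Z' = \emptyset$.

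Next I would use the forgetful morphism $q \colon \Hilb(l, (s, r), l) \to B$, $(C, Z, L) \mapsto (L, Z)$, where $B$ is the relative Hilbert scheme of $l$ points of the universal degree-$(0, 1)$ line over $\left| \Osh(0, 1) \right| \cong \PP^1$. Fix $(L, Z) \in B$ and let $z_1^{}, \dots, z_m^{}$ be the distinct points of the support of $Z$; thus $m \le l$. By the first step the fibre $q^{-1}(L, Z)$ is the set of $[g]$ in $\PP(\H^0(\Osh(s, r - 1)))$ with $g(z_i^{}) \neq 0$ for every $i$, that is, the complement in a projective space of dimension $(s + 1)r - 1$ of the $m$ hyperplanes $\{ g(z_i^{}) = 0 \}$. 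Because the $z_i^{}$ lie on $L \cong \PP^1$ and $m \le l \le s + 1$, evaluation at any $j$ of them is a surjection $\H^0(\Osh(s, r - 1)) \to \CC^j$ — it factors through the surjection onto $\H^0(\Osh_L^{}(s, 0)) \cong \H^0(\Osh_{\PP^1}^{}(s))$, and at most $s + 1$ points of $\PP^1$ impose independent conditions on degree-$s$ forms — so the $m$ hyperplanes are in general position. Choosing coordinates so that they become coordinate hyperplanes, the complement is $(\CC^*)^{m - 1} \times \CC^{(s + 1)r - m}$, hence $\eulertop(q^{-1}(L, Z))$ is $1$ if $m = 1$ and $0$ if $m \ge 2$. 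This is the only place where the hypothesis $s + 1 \ge l$ is used: if $l > s + 1$, there are points of $B$ with $m > s + 1$ collinear points in the support, the arrangement fails to be general, and the corresponding fibres contribute nonzero Euler characteristic.

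Finally I would split $B = B_1^{} \sqcup B'$, where $B_1^{} \subset B$ is the closed locus of pairs $(L, Z)$ for which the support of $Z$ is a single point and $B' = B \smallsetminus B_1^{}$. By Lemma~\ref{multiplicative} applied to $q$ over $B'$ (fibrewise Euler characteristic $0$) and over $B_1^{}$ (fibrewise Euler characteristic $1$), we obtain $\eulertop(q^{-1}(B')) = 0$ and $\eulertop(q^{-1}(B_1^{})) = \eulertop(B_1^{})$. Since $L \cong \PP^1$ is smooth, the only length-$l$ subscheme of $L$ supported at a point $z$ is $\Spec \Osh_{L, z}^{}/\mathfrak{m}_z^l$, so $B_1^{}$ is isomorphic to the universal degree-$(0, 1)$ line, which is $\PP^1 \times \PP^1$, of Euler characteristic $4$. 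Adding the two contributions over the decomposition $\Hilb(l, (s, r), l) = q^{-1}(B_1^{}) \sqcup q^{-1}(B')$ yields $\eulertop(\Hilb(l, (s, r), l)) = 4$. I expect the main difficulty to be the careful bookkeeping in the first step — translating the scheme-theoretic surjectivity condition that defines $\Hilb(l, (s, r), l)$ inside the nested Hilbert scheme into the concrete non-vanishing condition on $g$ — together with the general-position verification for the hyperplane arrangement that makes the hypothesis $s + 1 \ge l$ necessary.
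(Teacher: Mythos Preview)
Your proposal is correct and follows essentially the same approach as the paper: both consider the forgetful morphism to $\Hilb((0,1), l)$, identify each fibre as the complement in $|\Osh(s, r-1)|$ of $m$ linearly independent hyperplanes (where $m = \length Z_\red$), and use that this complement has Euler characteristic $1$ for $m = 1$ and $0$ for $m \ge 2$. Your final step is marginally more direct---you split the base only according to whether the support of $Z$ is a single point and read off $\eulertop(B_1) = \eulertop(\PP^1 \times \PP^1) = 4$---whereas the paper stratifies by the full partition type of the support and substitutes the values $\eulertop(D_M)$ from Remark~\ref{discriminant}, but this is a cosmetic difference.
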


\begin{proof}
The set of closed points of $\Hilb(l, (s, r), l)$ can be identified with
\[
\{ (C, Z, L) \mid C = C' \cup L, \ \deg C' = (s, r - 1), \ \deg L = (0, 1), \ \length Z = l, \ Z \subset L \smallsetminus C' \}.
\]
Consider the morphism of schemes
\[
\varrho \colon \Hilb(l, (s, r), l) \lra \Hilb((0, 1), l) \quad \text{given by} \quad \varrho(C, Z, L) = (L, Z).
\]
If $(L, Z) \in \Hilb((0, 1), l)$, then $Z$ imposes $l$ linearly independent conditions on curves of degree $(s, r - 1)$.
We deduce that the reduced fiber $\varrho^{-1}(L, Z)$
can be identified with the complement in $\left| \Osh(s, r - 1) \right|$ of the union of $\length Z_\red^{}$ linearly independent hyperplanes.
We identify $\Hilb((0, 1), l)$ with $\left| \Osh(0, 1) \right| \times \PP^l$.
Under this identification, $\{ L \} \times \PP^l$ corresponds to the Hilbert scheme of $l$ points on $L$.
Consider $M = \{ \mu_1^{}, \dots, \mu_m^{} \} \in \Part(l)$.
Let $D_M^{} \subset \PP^l$ be the locally closed subset of subschemes $Z \subset \PP^1$ concentrated at $m$ distinct points,
of multiplicities $\mu_1^{}, \dots, \mu_m^{}$.
If $(L, Z) \in \left| \Osh(0, 1) \right| \times D_M^{}$, then $\varrho^{-1}(L, Z)$
can be identified with the complement $A_m^{}$ in $\left| \Osh(s, r - 1) \right|$ of the union of $m$ linearly independent hyperplanes.
In view of Lemma~\ref{multiplicative},
\[
\eulertop(\varrho^{-1}(\left| \Osh(0, 1) \right| \times D_M^{}))
= \eulertop(A_m^{}) \eulertop(\left| \Osh(0, 1) \right|) \eulertop(D_M^{})
= 2 \eulertop(A_m^{}) \eulertop(D_M^{}),
\]
hence
\[
\eulertop(\Hilb(l, (s, r), l))
= \sum_{M \in \Part(l)} 2 \eulertop(A_m^{}) \eulertop(D_M^{}).
\]
Finally, we substitute the values for $\eulertop(D_M^{})$, found at Remark~\ref{discriminant}, and for $\eulertop(A_m^{})$.
\end{proof}

\begin{proposition}
\label{k=1,l=2}
Let $r \ge 2$ and $s \ge 1$ be integers.
We claim that $\eulertop(\Hilb(1, (s, r), 2)) = 20$.
\end{proposition}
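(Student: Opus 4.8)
The plan is to rewrite the condition defining $\Hilb(1,(s,r),2)$ in geometric terms, stratify by the isomorphism type of $Z$, and compute the Euler characteristic of each stratum as the total space of a tower of fibrations with constant fibre Euler characteristic, applying Lemma~\ref{multiplicative} at each step; this is the same strategy used in the proof of Proposition~\ref{k=l}. A diagram chase on the square in Notation~\ref{nested} identifies $\Osh_{Z_0}$ with the kernel of $\Osh_Z\to\Osh_{Z\cap C'}$, so that $\length Z_0 = \length Z - \length(Z\cap C')$; hence the closed points of $\Hilb(1,(s,r),2)$ are the triples $(C',L,Z)$ with $\deg C' = (s,r-1)$, $\deg L = (0,1)$, $Z\subset C = C'\cup L$, $\length Z = 2$ and $\length(Z\cap C') = 1$. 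Two cases occur: either $Z$ is reduced, consisting of two distinct points one of which lies on $L\smallsetminus C'$ and the other on $C'$ (the stratum $S_{\mathrm A}$, which is open), or $Z$ is non-reduced, supported at a single point $q$ which is then forced to lie on $L\cap C'$ (the stratum $S_{\mathrm B}$, which is closed).

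For $S_{\mathrm A}$ I would use the morphism $S_{\mathrm A}\to\PP^1\times\PP^1$, $(C',L,Z)\mapsto(L,p)$, where $p$ is the unique point of $Z$ not on $C'$ and $\PP^1\times\PP^1$ is identified with the incidence variety $\{(L,p)\mid p\in L\}$. The fibre over $(L,p)$ is the universal degree-$(s,r-1)$ curve over the affine open set $\{C'\mid p\notin C'\}\subset\left|\Osh(s,r-1)\right|$, and does not depend on $L$. Projecting this fibre onto $X$ through the second point of $Z$, and using that $\Osh(s,r-1)$ is base-point-free and separates points (valid since $s\ge 1$ and $r\ge 2$), the fibre over a point of $X$ distinct from $p$ is an affine space and the fibre over $p$ itself is empty; by Lemma~\ref{multiplicative} the Euler characteristic of this fibre is $\eulertop(X\smallsetminus\{p\}) = 3$. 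A second application of Lemma~\ref{multiplicative} gives $\eulertop(S_{\mathrm A}) = 3\cdot\eulertop(\PP^1\times\PP^1) = 12$.

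For $S_{\mathrm B}$ the local picture at $q$ is the heart of the matter. Since $q$ lies on both components of the divisor $C = C'+L$, the equation of $C$ has vanishing linear part at $q$, so \emph{every} length-two subscheme of $X$ supported at $q$ is automatically contained in $C$; and such a subscheme is contained in $C'$ precisely when its tangent direction is the tangent line of $C'$ at $q$ --- a single direction when $q$ is a smooth point of $C'$, and every direction when $q$ is a singular point of $C'$. Hence $S_{\mathrm B}$ forces $q$ to be a smooth point of $C'$, and the morphism $S_{\mathrm B}\to B := \{(L,q,C')\mid q\in L\cap C',\ q\in(C')_{\mathrm{sm}}\}$ has every fibre isomorphic to $\PP^1$ with one point deleted, of Euler characteristic $1$. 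Projecting $B$ onto $\{(L,q)\mid q\in L\} = \PP^1\times\PP^1$, the fibre is $\{C'\mid q\in C'\}\smallsetminus\{C'\mid\operatorname{mult}_q C'\ge 2\}$, which, using that $\Osh(s,r-1)$ separates $1$-jets (again valid since $s\ge 1$ and $r\ge 2$), equals $\PP^{N-1}\smallsetminus\PP^{N-3}$ with $N = (s+1)r-1$, of Euler characteristic $2$. Two applications of Lemma~\ref{multiplicative} give $\eulertop(S_{\mathrm B}) = 2\cdot\eulertop(\PP^1\times\PP^1) = 8$, and adding the two strata yields $\eulertop(\Hilb(1,(s,r),2)) = 12+8 = 20$.

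The step I expect to be most delicate is the $S_{\mathrm B}$ analysis: checking that the scheme-theoretic condition $\length(Z\cap C') = 1$ really is equivalent to ``$q$ is a smooth point of $C'$ and $Z\not\subset C'$'' uniformly across all degenerations --- in particular when $L$ is tangent to $C'$ at $q$ or when $L$ is a component of $C'$ --- and verifying that the maps constructed above are genuine morphisms of schemes with reduced fibres, so that Lemma~\ref{multiplicative} can be applied as stated. The $S_{\mathrm A}$ computation, by contrast, is a routine incidence-variety argument.
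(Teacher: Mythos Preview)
Your proof is correct and follows essentially the same strategy as the paper's: both split $\Hilb(1,(s,r),2)$ according to whether $Z$ is reduced or a double point, both use the key observation that $C=C'+L$ is singular at any point of $C'\cap L$ so that the condition $Z\subset C$ becomes redundant in the non-reduced case, and both compute each stratum by Lemma~\ref{multiplicative}. The only difference is the bookkeeping of the fibrations. The paper projects the whole space to $\Hilb(2)$ via $(C,Z,L)\mapsto Z$: over a reduced $Z=\{p_1,p_2\}$ the fibre is two copies of $\PP^{sr+r-2}\smallsetminus\PP^{sr+r-3}$ (one for each choice of which $p_i$ lies on $L\smallsetminus C'$), giving Euler characteristic $2$; over a double point the fibre is a single such affine cell, giving Euler characteristic $1$; hence $2\cdot\eulertop(\Hilb_0(2))+1\cdot\eulertop(\Hilb(2)\smallsetminus\Hilb_0(2))=2\cdot 6+8=20$. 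You instead project the reduced stratum to the incidence variety $\{(L,p)\mid p\in L\}\simeq\PP^1\times\PP^1$ and the non-reduced stratum to $\{(L,q,C')\}$, obtaining $3\cdot 4+2\cdot 4=20$. The two computations are reparameterizations of one another, and your concern about degenerations in $S_{\mathrm B}$ (e.g.\ $L\subset C'$) is not an obstacle: the singularity argument for $C$ goes through verbatim since the local equation $f_{C'}f_L$ still lies in $\mathfrak m_q^2$ whenever $q\in C'\cap L$.
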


\begin{proof}
The set of closed points of $\Hilb(1, (s, r), 2)$ can be identified with
\begin{multline*}
\{ (C, Z, L) \mid C = C' \cup L, \ \deg C' = (s, r - 1), \ \deg L = (0, 1), \ \length Z = 2, \ Z \nsubseteq C', \\
Z \ \text{contains a point of} \ C' \}.
\end{multline*}
Let $\Hilb(2)$ denote the Hilbert scheme parametrizing subschemes of $X$ of length $2$.
Let $\Hilb_0^{}(2)$ denote the open subset of reduced subschemes.
Consider the morphism of schemes
\[
\varrho \colon \Hilb(1, (s, r), 2) \lra \Hilb(2) \quad \text{given by} \quad \varrho(C, Z, L) = Z.
\]
Every $Z \in \Hilb(2)$ imposes two linearly independent conditions on curves of degree $(s, r - 1)$.
The reduced fiber of $\varrho$ over $Z = \{ p_1^{}, p_2^{} \}$ is
\begin{align*}
\varrho^{-1}(Z) & = \{ (C' \cup L, L) \mid p_1^{} \in L \smallsetminus C', \ p_2^{} \in C' \}
\, \Sqcup \, \{ (C' \cup L, L) \mid p_2^{} \in L \smallsetminus C', \ p_1^{} \in C' \} \\
& \simeq \{ C' \mid p_1^{} \notin C', \ p_2^{} \in C' \} \, \Sqcup \, \{ C' \mid p_2^{} \notin C', \ p_1^{} \in C' \} \\
& \simeq (\PP^{s r + r - 2} \smallsetminus \PP^{s r + r - 3}) \, \Sqcup \, (\PP^{s r + r - 2} \smallsetminus \PP^{s r + r - 3}).
\end{align*}
We obtain $\eulertop(\varrho^{-1}(Z)) = 2$.
Assume now that $Z$ is a double point concentrated at $p$.
We have
\begin{align*}
\varrho^{-1}(Z) & = \{ (C, L) \mid C = C' \cup L, \ Z \subset C, \ Z \nsubseteq C', \ p \in C' \} \\
& = \{ (C, L) \mid C = C' \cup L, \ Z \subset C, \ Z \nsubseteq C', \ p \in C' \cap L \}. \\
\intertext{The condition that $Z$ be contained in $C$ is redundant because $C$ is singular at $p$.
Thus,}
\varrho^{-1}(Z) & = \{ (C, L) \mid C = C' \cup L, \ Z \nsubseteq C', \ p \in C' \cap L \} \\
& \simeq \{ C' \mid Z \nsubseteq C', \ p \in C' \} \simeq \PP^{s r + r - 2} \smallsetminus \PP^{s r + r - 3},
\end{align*}
hence $\eulertop(\varrho^{-1}(Z)) = 1$.
Using Lemma~\ref{multiplicative} and the additivity of the Euler characteristic, we get
\[
\eulertop(\Hilb(1, (s, r), 2)) = 2 \eulertop(\Hilb_0^{}(2)) + \eulertop(\Hilb(2) \smallsetminus \Hilb_0^{}(2)) = 20.
\qedhere
\]
\end{proof}

\begin{proposition}
\label{k=1,l=3}
Let $r \ge 3$ and $s \ge 2$ be integers.
We claim that $\eulertop(\Hilb(1, (s, r), 3)) = 76$.
\end{proposition}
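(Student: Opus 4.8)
The plan is to follow the pattern of the proofs of Propositions~\ref{k=l} and \ref{k=1,l=2}. Let $\Hilb(3)$ denote the Hilbert scheme of length-$3$ subschemes of $X = \PP^1 \times \PP^1$ and consider the morphism of schemes
\[
\varrho \colon \Hilb(1, (s, r), 3) \lra \Hilb(3) \quad \text{given by} \quad \varrho(C, Z, L) = Z.
\]
First I would describe the closed points of the fibre $\varrho^{-1}(Z)$. By Notation~\ref{nested}, such a point is a pair $(L, C')$ consisting of a line $L = \PP^1 \times \{ q \}$ of degree $(0, 1)$ and a curve $C'$ of degree $(s, r - 1)$, subject to $Z \subset C' \cup L$ and to the condition that the overflow subscheme $Z_0 \subset L$ have length $1$; since $\length(Z \cap C') = \length Z - \length Z_0$, where $Z \cap C'$ is the largest subscheme of $Z$ contained in $C'$, this amounts to $\length(Z \cap C') = 2$. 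As $Z_0$ is a reduced point it is supported at some $p \in \mathrm{Supp}(Z)$, and then $L = \pr_2^{-1}(\pr_2(p))$ is forced. Arguing as in the proof of Proposition~\ref{k=1,l=2}, and using that $C' \cup L$ is automatically singular at $p$ so that $Z_p \subset C' \cup L$ whenever $\length Z_p \le 3$, one finds that $\varrho^{-1}(Z)$ is the disjoint union, over the finitely many \emph{admissible} support points $p$, of the loci of curves $C'$ through $p$ that cut $Z$ in a subscheme of length exactly $2$ whose residual lies on $\pr_2^{-1}(\pr_2(p))$; each such locus will turn out to be the complement of a linear subspace inside a linear subspace of $\left| \Osh(s, r - 1) \right|$.

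Next I would stratify $\Hilb(3)$ by the isomorphism type of $Z$ and compute $\eulertop(\varrho^{-1}(Z))$ on each stratum, working in local coordinates adapted to $\pr_2$. There are four strata: (a) $Z$ is three distinct reduced points; (b) $Z$ is a double point together with a disjoint reduced point; (c) $Z$ is a curvilinear length-$3$ subscheme supported at one point; (d) $Z = \Osh_X / \mathfrak{m}_p^2$ is a fat point. Writing $N = (s + 1) r - 1 = \dim \left| \Osh(s, r - 1) \right|$, the computation should give: in case (a), three pieces each isomorphic to $\PP^{N - 2} \smallsetminus \PP^{N - 3}$, one per choice of the overflowing point, so $\eulertop(\varrho^{-1}(Z)) = 3$; in case (b), two pieces each $\PP^{N - 2} \smallsetminus \PP^{N - 3}$, the overflowing point being either the reduced point or the support of the double point, so $\eulertop(\varrho^{-1}(Z)) = 2$; in case (c), the single piece consisting of the curves through $p$ tangent there to the tangent line of the curvilinear scheme $Z$ and not containing $Z$, isomorphic to $\PP^{N - 2} \smallsetminus \PP^{N - 3}$, so $\eulertop(\varrho^{-1}(Z)) = 1$; in case (d), the single piece consisting of the curves smooth at $p$, isomorphic to $\PP^{N - 1} \smallsetminus \PP^{N - 3}$, so $\eulertop(\varrho^{-1}(Z)) = 2$. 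The essential input here is that for $r \ge 3$ and $s \ge 2$ the line bundle $\Osh(s, r - 1)$ is $2$-very ample, so that every subscheme of $X$ of length at most $3$ imposes independent linear conditions on $\left| \Osh(s, r - 1) \right|$; this is where the hypotheses $r \ge 3$, $s \ge 2$ enter, just as $1$-very ampleness of $\Osh(s, r - 1)$, needed in Proposition~\ref{k=1,l=2}, required only $r \ge 2$, $s \ge 1$.

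It then remains to assemble the answer by Lemma~\ref{multiplicative} and additivity of $\eulertop$. One computes the Euler characteristics of the four strata of $\Hilb(3)$ from G\"ottsche's formula $\eulertop(\Hilb(3)) = 40$ together with the natural support stratification of $\Hilb(3)$: the reduced locus is the space of three unordered distinct points of $X$, of Euler characteristic $\binom{4}{3} = 4$; the double-point-plus-point locus fibres over $\PP(TX)$, itself of Euler characteristic $2 \cdot 4 = 8$, with fibre $X$ minus a point, hence has Euler characteristic $8 \cdot 3 = 24$; and the punctual locus fibres over $X$ with fibre the length-$3$ punctual Hilbert scheme of the plane, of Euler characteristic $3$, and splits into its curvilinear part, of Euler characteristic $4 \cdot 2 = 8$, and the locus of fat points, isomorphic to $X$ and of Euler characteristic $4$. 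Applying Lemma~\ref{multiplicative} on each stratum and summing,
\[
\eulertop(\Hilb(1, (s, r), 3)) = 3 \cdot 4 + 2 \cdot 24 + 1 \cdot 8 + 2 \cdot 4 = 76.
\]

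The main obstacle is the local case analysis in the second paragraph: for each type of $Z$ one must determine, in coordinates adapted to the $\pr_2$-fibration, which support points $p$ are admissible and precisely which curves $C'$ through $p$ swallow $Z$ into $C' \cup L$ with overflow length exactly $1$ --- a requirement that imposes a tangency condition on $C'$ at $p$ in case (c) but, perhaps surprisingly, none in case (d) --- and then recognise the resulting family of curves as a complement of linear subspaces. The rest, including the bookkeeping of the Euler characteristics of the strata of $\Hilb(3)$, is routine once G\"ottsche's formula is invoked.
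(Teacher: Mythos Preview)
Your approach is essentially the paper's: the same forgetful map $\varrho$ to $\Hilb(3)$, the same four-stratum decomposition, the same fibre Euler characteristics $3,2,1,2$, and the same stratum counts $4,24,8,4$. One remark: in case (d) you describe the fibre as the locus of curves $C'$ smooth at $p$, i.e.\ $\PP^{N-1}\smallsetminus\PP^{N-3}$, whereas the paper fibres this same locus over the $\PP^1$ of length-$2$ subschemes $Z'\subset Z$ with fibre $\PP^{N-2}\smallsetminus\PP^{N-3}$; the two descriptions agree.

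There is, however, a genuine slip in your blanket justification. You assert that ``$C'\cup L$ is automatically singular at $p$ so that $Z_p\subset C'\cup L$ whenever $\length Z_p\le 3$.'' This is false for curvilinear $Z$ of length $3$: take local coordinates with $I(Z)=(x,y^3)$, $I(L)=(y)$, $I(C')=(x+y)$; then $C'\cup L$ is singular at $p$ but $xy+y^2\notin(x,y^3)$, so $Z\nsubseteq C'\cup L$. Singularity only forces the local ideal of $C'\cup L$ into $\mathfrak m_p^2$, and $\mathfrak m_p^2\nsubseteq(x,y^3)$. What actually makes the containment $Z\subset C'\cup L$ automatic in case (c) is the tangency condition $Z'\subset C'$ that you (correctly) impose: the paper checks this by writing $I(C')=(w_1 x+w_2 y^2)$ and expanding $(ax+by)(w_1x+w_2y^2)$ term by term, for both normal forms $I(Z)=(x,y^3)$ and $I(Z)=(x+y^2,xy)$. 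Your final description of the fibre in case (c) is right, but the reason you give for dropping the condition $Z\subset C'\cup L$ is not; you should replace it with this local computation.
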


\begin{proof}
The set of closed points of $\Hilb(1, (s, r), 3)$ can be identified with
\begin{multline*}
\{ (C, Z, L) \mid C = C' \cup L, \ \deg C' = (s, r - 1), \ \deg L = (0, 1), \ \length Z = 3, \ Z \nsubseteq C', \\
C' \ \text{contains a subscheme} \ Z' \subset Z \ \text{of length} \ 2 \}.
\end{multline*}
Let $\Hilb(3)$ be the Hilbert scheme parametrizing the subschemes $Z \subset X$ of length $3$.
For $0 \le i \le 3$ we consider the locally closed subschemes $\Hilb_i^{}(3) \subset \Hilb(3)$ defined as follows:
$\Hilb_0^{}(3)$ is the open subscheme parametrizing the reduced subschemes $Z \subset X$ of length $3$;
$\Hilb_1^{}(3)$ parametrizes the subschemes $Z \subset X$ that are a union of a simple point and a double point;
$\Hilb_2^{}(3)$ parametrizes the subschemes $Z \subset X$ of length $3$, concentrated at a point $p$,
such that $Z$ is contained in a curve which is smooth at $p$;
$\Hilb_3^{}(3)$ is the closed subscheme parametrizing the ordinary triple points of $X$.
We have the relations
\[
\eulertop(\Hilb_0^{}(3)) = 4, \qquad
\eulertop(\Hilb_1^{}(3)) = 24, \qquad
\eulertop(\Hilb_2^{}(3)) = 8, \qquad
\eulertop(\Hilb_3^{}(3)) = 4.
\]
Consider the morphism of schemes
\[
\varrho \colon \Hilb(1, (s, r), 3) \lra \Hilb(3) \quad \text{given by} \quad \varrho(C, Z, L) = Z.
\]
Every $Z \in \Hilb(3)$ imposes three linearly independent conditions on curves of degree $(s, r - 1)$.
This can be seen by using the minimal locally free resolution of the ideal sheaf $\I_Z^{}$.
If $Z$ is contained in a line, it is clear what the minimal resolution is.
If $Z$ is not contained in a line,
the minimal resolution of $\I_Z^{}$ is given at \cite[Lemma 2.1]{advances}.
Consider $Z = \{ p_1^{}, p_2^{}, p_3^{} \} \in \Hilb_0^{}(3)$.
The reduced fiber of $\varrho$ over $Z$ is
\begin{align*}
\varrho^{-1}(Z) & = \{ (C' \cup L, L) \mid p_1^{} \in L \smallsetminus C', \ p_2^{}, \, p_3^{} \in C' \} \, \Sqcup \\
& \phantom{{} = {}} \{ (C' \cup L, L) \mid p_2^{} \in L \smallsetminus C', \ p_3^{}, \, p_1^{} \in C' \} \,
\Sqcup \, \{ (C' \cup L, L) \mid p_3^{} \in L \smallsetminus C', \ p_1^{}, \, p_2^{} \in C' \} \\
& \simeq \{ C' \mid p_1^{} \notin C', \ p_2^{}, \, p_3^{} \in C' \} \,
\Sqcup \, \{ C' \mid p_2^{} \notin C', \ p_3^{}, \, p_1^{} \in C' \} \,
\Sqcup \, \{ C' \mid p_3^{} \notin C', \ p_1^{}, \, p_2^{} \in C' \} \\
& \simeq {} (\PP^{s r + r - 3} \smallsetminus \PP^{s r + r - 4}) \,
\Sqcup \, (\PP^{s r + r - 3} \smallsetminus \PP^{s r + r - 4}) \, \Sqcup \, (\PP^{s r + r - 3} \smallsetminus \PP^{s r + r - 4}).
\end{align*}
Thus, $\eulertop(\varrho^{-1}(Z)) = 3$.
Consider, $Z = \{ p_1^{} \} \cup Z_2^{} \in \Hilb_1^{}(3)$, where $Z_2^{}$ is a double point concentrated at $p_2^{}$.
Notice that $Z'$ must be either $\{ p_1^{}, p_2^{} \}$ or $Z_2^{}$, hence
\begin{align*}
\varrho^{-1}(Z) & = \{ (C' \cup L, L) \mid Z \subset C' \cup L, \ p_1^{} \in C', \ p_2^{} \in C', \ Z_2^{} \nsubseteq C' \} \\
& \phantom{{} = {}} \Sqcup \, \{ (C' \cup L, L) \mid p_1^{} \in L \smallsetminus C', \ Z_2^{} \subset C' \} \\
& = \{ (C' \cup L, L) \mid Z \subset C' \cup L, \ p_1^{} \in C', \ p_2^{} \in C' \cap L, \ Z_2^{} \nsubseteq C' \} \\
& \phantom{{} = {}} \Sqcup \, \{ (C' \cup L, L) \mid p_1^{} \in L \smallsetminus C', \ Z_2^{} \subset C' \}. \\
\intertext
{If $p_2^{} \in C' \cap L$, then $C' \cup L$ is singular at $p_2^{}$, hence $Z_2^{} \subset C' \cup L$, and hence the condition
that $Z$ be contained in $C' \cup L$ is redundant.
It follows that}
\varrho^{-1}(Z) & \simeq \{ C' \mid p_1^{}, \, p_2^{} \in C', \ Z_2^{} \nsubseteq C' \} \,
\Sqcup \, \{ C' \mid p_1^{} \notin C', \ Z_2^{} \subset C' \} \\
& \simeq (\PP^{s r + r - 3} \smallsetminus \PP^{s r + r - 4}) \, \Sqcup \, (\PP^{s r + r - 3} \smallsetminus \PP^{s r + r - 4}), \\
\intertext
{hence $\eulertop(\varrho^{-1}(Z)) = 2$.
Consider $Z \in \Hilb_2^{}(3)$ that is concentrated at $p$.
There is a unique subscheme $Z' \subset Z$ of length $2$ because $Z$ is contained in a curve that is smooth at $p$.
We have}
\varrho^{-1}(Z) & = \{ (C' \cup L, L) \mid Z \subset C' \cup L, \ Z \nsubseteq C', \ Z' \subset C' \} \\
& = \{ (C' \cup L, L) \mid Z \subset C' \cup L, \ Z \nsubseteq C', \ Z' \subset C', \ p \in C' \cap L \}. \\
\intertext
{We claim that the condition that $Z$ be contained in $C' \cup L$ is redundant.
We can choose an affine chart of $X$ around $p$ with coordinates $(x, y)$
such that $I(L) = (a x + b y)$ with $a$, $b \in \CC$,
and either $I(Z) = (x, y^3)$ or $I(Z) = (x + y^2, x y)$.
In both cases we have $I(Z') = (x, y^2)$.
Write $I(C') = (w)$ for some $w \in \CC [x, y]$.
By hypothesis, $w = w_1^{} x + w_2^{} y^2$ for some $w_1^{}$, $w_2^{} \in \CC [x, y]$.
We have}
w (a x + b y)
& = a w_1^{} x^2 + b w_1^{} x y + a w_2^{} x y^2 + b w_2^{} y^3 \in (x, y^3), \\
w (a x + b y)
& = a w_1^{} x(x + y^2) - a w_1^{} x y^2 + b w_1^{} x y \\
& \phantom{{} = {}} + a w_2^{} x y^2 + b w_2^{} (x + y^2) y - b w_2^{} x y \in (x + y^2, x y), \\
\intertext
{so $w (a x + b y) \in I(Z)$.
This proves the claim.
Removing the superfluous conditions, we see that}
\varrho^{-1}(Z) & = \{ (C' \cup L, L) \mid Z \nsubseteq C', \ Z' \subset C', \ p \in L \} \\
& \simeq \{ C' \mid Z \nsubseteq C', \ Z' \subset C' \} \simeq \PP^{s r + r - 3} \smallsetminus \PP^{s r + r - 4}, \\
\intertext
{hence $\eulertop(\varrho^{-1}(Z)) = 1$.
Finally, we consider an ordinary triple point $Z$ supported on $p$.
The subschemes $Z' \subset Z$ of length $2$ are parametrized by $\PP^1$.
Repeating the previous steps, we obtain}
\varrho^{-1}(Z) & \simeq \{ C' \mid Z \nsubseteq C', \ Z' \subset C' \}. \\
\intertext
{This is an algebraic bundle with fiber $\PP^{s r + r - 3} \smallsetminus \PP^{s r + r - 4}$ and base $\PP^1$, accounting for $Z'$,
hence $\eulertop(\varrho^{-1}(Z)) = 2$.
Applying Lemma~\ref{multiplicative} and the additivity of the Euler characteristic, yields}
\eulertop(\Hilb(1, (s, r), 3))
& = 3 \eulertop(\Hilb_0^{}(3)) +  2 \eulertop(\Hilb_1^{}(3)) + \eulertop(\Hilb_2^{}(3)) +  2 \eulertop(\Hilb_3^{}(3))
= 76. \qedhere
\end{align*}
\end{proof}

\section{The topological Euler characteristic of the moduli spaces}
\label{formulas}

\noindent
In this section we continue the study of moduli spaces on $X = \PP^1 \times \PP^1$.
Beginning with Proposition~\ref{singular}, we shall focus entirely on the case when the first Chern class is of the form $(2, r)$.

\begin{lemma}
\label{M_nonempty}
Let $r \ge 0$ and $s \ge 0$ be integers that are not both zero.
Let $t$ be an integer.
Let $\alpha > 0$ be a real number.
\begin{enumerate}
\item[\emph{(i)}]
If $\M^\alpha((s, r), t) \neq \emptyset$, then $t \ge r + s - r s$.
\item[\emph{(ii)}]
Assume that $r > 0$ and $s > 0$.
Then $\M^\alpha((s, r), t)^\st \neq \emptyset$ if and only if $t \ge r + s - r s$.
\end{enumerate}
\end{lemma}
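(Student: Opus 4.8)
\emph{The plan.} For (i) I would extract an effective divisor from the section of the coherent system and play it against $\alpha$-semi-stability; for the non-trivial direction of (ii) I would write down one explicit coherent system that works for all $\alpha$.

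\emph{Part (i).} Let $\langle(\Gamma,\F)\rangle \in \M^\alpha((s,r),t)$. Because the order is $1$, $\Gamma = \CC\sigma$ for a non-zero section $\sigma$ of $\F$, and since $\F$ is pure of dimension one the image of the morphism $\Osh_X^{} \to \F$ determined by $\sigma$ is $\Osh_C^{}$ for an effective divisor $C \subset X = \PP^1 \times \PP^1$ (it is a quotient of $\Osh_X^{}$ that is a non-zero subsheaf of the pure sheaf $\F$, hence the structure sheaf of a pure one-dimensional closed subscheme of the smooth surface $X$). Put $\chern(\Osh_C^{}) = (s_0, r_0)$. I would first record that $0 \le s_0 \le s$ and $0 \le r_0 \le r$ — since $\chern(\F) - \chern(\Osh_C^{})$ is the first Chern class of the quotient $\F/\Osh_C^{}$, a sheaf of dimension at most one, hence an effective class — that $(s_0, r_0) \neq (0,0)$, and that $\euler(\Osh_C^{}) = r_0 + s_0 - r_0 s_0$ by Riemann--Roch on $X$. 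If $\Osh_C^{} = \F$, then $t = \euler(\F) = r + s - rs$ and we are done. Otherwise $(\Gamma, \Osh_C^{})$ is a proper coherent subsystem of $(\Gamma,\F)$, so $\alpha$-semi-stability gives $\p_\alpha(\Gamma, \Osh_C^{}) \le \p_\alpha(\Gamma, \F)$, that is,
\[
\frac{\alpha + r_0 + s_0 - r_0 s_0}{r_0 + s_0} \ \le\ \frac{\alpha + t}{r + s}.
\]
Clearing denominators and rearranging, this is equivalent to
\[
(r_0 + s_0)\bigl(t - (r + s - rs)\bigr) \ \ge\ (r + s - r_0 - s_0)\,\alpha \;+\; r\,r_0\,(s - s_0) \;+\; s\,s_0\,(r - r_0),
\]
and the right-hand side is non-negative because $\alpha > 0$, $r + s \ge r_0 + s_0$, $s_0 \le s$ and $r_0 \le r$. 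Since $r_0 + s_0 > 0$, we conclude $t \ge r + s - rs$.

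\emph{Part (ii).} The implication ``$\Rightarrow$'' is immediate from part (i), as $\M^\alpha((s,r),t)^\st \subseteq \M^\alpha((s,r),t)$; the content is the converse. So assume $r, s > 0$ and $t \ge r + s - rs$, and set $l = t - r - s + rs \ge 0$. The plan is to produce one coherent system that is $\alpha$-stable for every $\alpha > 0$. Since $\Osh_X^{}(s,r)$ is globally generated for $s, r \ge 1$, a general member of $|\Osh_X^{}(s,r)|$ is a smooth curve $C \subset X$ of degree $(s,r)$; as $\H^1(\Osh_X^{}(-s,-r)) = 0$ by the Künneth formula (here $s, r \ge 1$), the exact sequence $0 \to \Osh_X^{}(-s,-r) \to \Osh_X^{} \to \Osh_C^{} \to 0$ gives $\H^0(\Osh_C^{}) = \CC$, so $C$ is integral. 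Fix a length-$l$ subscheme $Z \subset C$, put $\F = \Osh_C^{}(Z)$ (a line bundle on $C$), and let $\Gamma = \CC \cdot 1 \subseteq \H^0(\Osh_C^{}) \subseteq \H^0(\F)$. Then $(\Gamma, \F)$ has order $1$, $\chern(\F) = \chern(\Osh_C^{}) = (s,r)$ and $\euler(\F) = \euler(\Osh_C^{}) + l = t$.

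It remains to check that $(\Gamma, \F)$ is $\alpha$-stable for all $\alpha > 0$. The morphism $\Osh_X^{} \to \F$ given by $1$ has image $\Osh_C^{}$, since $1$ is a non-zero-divisor on $\Osh_C^{}$; hence for a proper coherent subsystem $(\Gamma',\F')$ one has $\Gamma' = \{0\}$ or $\Gamma' = \Gamma$, and in either case $\F'$ is a non-zero rank-one subsheaf of the line bundle $\F$ on the smooth integral curve $C$, so $\F' = \Osh_C^{}(D)$ for a divisor $D$ on $C$ with $D \le Z$ and $D \neq Z$ (and with $D$ effective when $\Gamma' = \Gamma$, because then $\Osh_C^{} \subseteq \F'$). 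Consequently $\euler(\F') = \euler(\Osh_C^{}) + \deg D \le (r + s - rs) + l - 1 = t - 1$, and a short computation of slopes gives $\p_\alpha(\Gamma', \F') < \p_\alpha(\Gamma, \F)$ whether $\Gamma' = \{0\}$ or $\Gamma' = \Gamma$. Therefore $\langle(\Gamma, \F)\rangle \in \M^\alpha((s,r),t)^\st$. The one point I expect to require care is in part (i): identifying the image of the section as the structure sheaf of an honest effective divisor (which rests on purity of $\F$ together with smoothness of $X$) and then steering the single semi-stability inequality down to the elementary positivity $r\,r_0(s - s_0) + s\,s_0(r - r_0) \ge 0$; the remaining steps are routine bookkeeping.
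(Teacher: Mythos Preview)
Your proof is correct and, for both parts, more direct than the paper's. For (i) the paper simply cites \cite[Lemma 4.3]{advances}, whereas you give a self-contained argument via the effective divisor cut out by the section and the positivity $r\,r_0(s-s_0)+s\,s_0(r-r_0)\ge 0$. For (ii) the paper takes an irreducible (not necessarily smooth) curve $C$, forms $\Osh_C^{}(Z)$ as in \eqref{O_C_Z}, observes via Proposition~\ref{M_infinity} that this is $\infty$-stable, and then descends through the finite list of walls: at each singular value $\alpha_i$, proper $\alpha_i$-semi-stability would force (by Proposition~\ref{fibers}(i)) a splitting of the support, contradicting irreducibility of $C$. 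Your route instead picks $C$ smooth, so that $\Osh_C^{}(Z)$ is an honest line bundle and every non-zero subsheaf is $\Osh_C^{}(D)$ with $\deg D \le l$; you then verify the slope inequality by hand for all $\alpha$ at once. This is more elementary and avoids the wall-crossing machinery, at the small cost of invoking Bertini; the paper's argument, by contrast, illustrates how the structure theory of Section~\ref{right_left} feeds back into basic existence questions.

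One small correction in (ii): the case $\Gamma' = \{0\}$, $\F' = \F$ is a legitimate proper subsystem but has $D = Z$, contrary to your assertion that $D \neq Z$ always. The slope inequality is of course still strict there, since $t/(r+s) < (\alpha+t)/(r+s)$, so the conclusion survives; just separate out this trivial case before writing $\euler(\F') \le t-1$.
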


\begin{proof}
At (i) we have quoted \cite[Lemma 4.3]{advances}.
Assume now that $r > 0$, $s > 0$ and $t \ge r + s - r s$.
Let $C \subset X$ be an irreducible curve given by a polynomial of degree $(s, r)$.
Let $Z \subset C$ be a subscheme of length $t - r - s + r s$.
According to Proposition~\ref{M_infinity}, $\Lambda = (\H^0(\Osh_C^{}), \Osh_C^{}(Z))$ gives a point in $\M^\infty((s, r), t)$.
Let $\alpha_1^{}, \dots, \alpha_m^{}$ be the singular values relative to the polynomial $r x_1^{} + s x_2^{} + t$.
Assume that $\Lambda$ is $(\alpha_i^{} + \epsilon)$-stable.
As noted supra diagram~\eqref{wall-crossing}, $\Lambda$ is $\alpha_i^{}$-semi-stable.
If $\Lambda$ were properly $\alpha_i^{}$-semi-stable, then $\Lambda$ would be an extension of $\Lambda'$ by $\Lambda''$,
as in Proposition~\ref{fibers}(i).
Thus, $C$ would be the union of the supports of $\Lambda'$ and $\Lambda''$,
contradicting the irreducibility of $C$.
We deduce that $\Lambda$ is $\alpha_i^{}$-stable.
By descending induction on $i$, we can show that $\Lambda$ is $\alpha_i^{}$-stable for all indices $1 \le i \le m$.
It follows that $\Lambda$ is $\alpha$-stable for all $\alpha \in (0, \infty)$.
\end{proof}

\begin{proposition}
\label{singular}
Consider integers $r \ge 1$ and $t \le 1$ satisfying the condition $r + t \ge 2$.
Then the set of singular values of $\alpha$ relative to the polynomial $P(x_1^{}, x_2^{}) = r x_1^{} + 2 x_2^{} + t$ is
\[
\Asing((2, r), t) = \Big\{ d(r + 2) - t \Bigm| \frac{t}{r + 2} < d \le r + t - 3, \ d \in \ZZ \Big\}.
\]
Moreover, for a singular value $\alpha_d^{} = d(r + 2) - t$, we have the decomposition
\begin{align*}
Y_d^{} & = \M^{\alpha_d}((2, r), t)^\pss = \bigsqcup_{r'} Y_{d, r'}^{} \quad \text{with} \quad
\frac{d r - t + 2}{d + 1} \le r' \le r - 1 \quad \text{and} \\
Y_{d, r'}^{} & = \M^{\alpha_d}((2, r'), \, t - d(r - r'))^\st \times \M((0, r - r'), \, d (r - r')).
\end{align*}
\end{proposition}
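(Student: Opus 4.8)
The plan is to run the characterisation of the properly semi-stable locus furnished by the morphism $\gamma$ of equation~\eqref{gamma}. A value $\alpha > 0$ is singular relative to $P(x_1^{}, x_2^{}) = r x_1^{} + 2 x_2^{} + t$ exactly when $\M^\alpha((2, r), t)^\pss \neq \emptyset$. If $\langle \Lambda \rangle$ is such a point, then its Jordan--H\"older filtration contains a unique term $\Lambda' = (\Gamma', \F')$ of order one, and the direct sum $\Theta$ of the remaining terms is a semi-stable sheaf with $\p_\alpha^{}(\Lambda') = \p(\Theta) = \p_\alpha^{}(\Lambda) =: d = (\alpha + t)/(r + 2)$; conversely, the discussion preceding Proposition~\ref{fibers} shows that the existence of such a pair already forces $\alpha$ to be singular. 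Writing $\chern(\F') = (s', r')$ and $\chern(\Theta) = (2 - s', r - r')$, the slope equalities together with $\mult(\F') + \mult(\Theta) = r + 2$ and $\euler(\F') + \euler(\Theta) = t$ pin down $\euler(\Theta) = d\,\mult(\Theta)$, $\euler(\F') = t - d\,\mult(\Theta)$ and $\alpha = d\,\mult(\F') - \euler(\F') = d(r + 2) - t =: \alpha_d^{}$. So every singular value is some $\alpha_d^{}$, and it remains to decide which $d$ occur and to identify the corresponding pieces.

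The heart of the argument is to show that only $s' = 2$ and $r' \ge 2$ can occur. Cross-multiplying $(\alpha_d^{} + \euler(\F'))/\mult(\F') = (\alpha_d^{} + t)/(r + 2)$, and using $(r + 2) - \mult(\F') = \mult(\Theta) \ge 1$ together with $\alpha_d^{} > 0$, one obtains $(r + 2)\euler(\F') < \mult(\F')\,t \le \mult(\F') = r' + s' \le r + 1$, where the middle step uses $t \le 1$ (and $\mult(\F') \ge 1$). Hence $\euler(\F') \le 0$. On the other hand, $\Lambda'$ is a coherent system of order one, so Lemma~\ref{M_nonempty}(i) forces $\euler(\F') \ge r' + s' - r' s'$. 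Combining, $r' + s' - r' s' \le 0$; since $s' \in \{0, 1, 2\}$, and $r' \ge 1$ whenever $s' = 0$ (else $\mult(\F') = 0$), this inequality holds only for $s' = 2$ and $r' \ge 2$.

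With $s' = 2$ and $2 \le r' \le r - 1$ (the upper bound since $\mult(\Theta) = r - r' \ge 1$, which also forces $r \ge 3$), Proposition~\ref{vartheta} shows that $\M((0, r - r'), \euler(\Theta))$ is non-empty exactly when $(r - r') \mid \euler(\Theta) = d(r - r')$, i.e.\ always, and that it then equals $\left| \Osh(0, r - r') \right|$; in particular $d$ is an integer. Lemma~\ref{M_nonempty}(ii) shows that $\M^{\alpha_d}((2, r'), t - d(r - r'))^\st \neq \emptyset$ exactly when $t - d(r - r') \ge 2 - r'$, which rearranges to $r'(d + 1) \ge dr - t + 2$; since $r + t \ge 2$ yields $\alpha_d^{} > 0 \Rightarrow d > t/(r + 2) > -1 \Rightarrow d \ge 0$, this is $r' \ge (dr - t + 2)/(d + 1)$. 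Thus, for a fixed integer $d$ with $\alpha_d^{} > 0$, the non-empty pieces $Y_{d, r'}^{}$ are precisely those with $(dr - t + 2)/(d + 1) \le r' \le r - 1$; using $t \le 1$ and $d \ge 0$ one checks $(dr - t + 2)/(d + 1) > 1$, so this range automatically gives $r' \ge 2$, and such an $r'$ exists iff $(dr - t + 2)/(d + 1) \le r - 1$, i.e.\ iff $d \le r + t - 3$. Finally, a point of $\M^{\alpha_d}((2, r), t)^\pss$ determines its Jordan--H\"older factors, hence $r'$, so the pieces $Y_{d, r'}^{}$ are pairwise disjoint and exhaust $Y_d^{}$ (via the bijection on closed points induced by $\gamma$), which gives the stated decomposition; the set of singular values is therefore $\{ \alpha_d^{} \mid t/(r + 2) < d \le r + t - 3,\ d \in \ZZ \}$, which is empty when $r \le 2$, consistently with the formula.

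The step I expect to require the most care is the reduction to $s' = 2$, $r' \ge 2$: one must track the sign of $\euler(\F')$ through the slope identity and be certain that the bound of Lemma~\ref{M_nonempty}(i) eliminates every coherent-system configuration except the intended ones. The remainder — converting $t - d(r - r') \ge 2 - r'$ into the displayed range for $r'$, and matching the endpoint conditions against the hypotheses $t \le 1$ and $r + t \ge 2$ — is routine but must be done carefully to reproduce exactly the bounds in the statement.
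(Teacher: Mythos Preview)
Your proof is correct and follows essentially the same route as the paper's: both use the bijection $\gamma$ from~\eqref{gamma}, rule out $s' \in \{0,1\}$ by combining the slope identity with Lemma~\ref{M_nonempty}(i) and the hypothesis $t \le 1$, then invoke Lemma~\ref{M_nonempty}(ii) and Proposition~\ref{vartheta} for the non-emptiness criteria to pin down the ranges of $d$ and $r'$. Your packaging is slightly more economical --- you derive $\euler(\F') \le 0$ once and read off both $s' = 2$ and $r' \ge 2$ from the single inequality $r' + s' - r's' \le 0$, whereas the paper handles $s' = 0$, $s' = 1$, and $r' \ge 2$ by separate chains of inequalities --- but the ingredients and logic are the same.
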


\begin{proof}
By definition, $\alpha \in \Asing((2, r), t)$ if $\M^\alpha((2, r), t)^\pss \neq \emptyset$.
Recalling the bijective morphism $\gamma$ from \eqref{gamma}, this is equivalent to saying
that there exist integers $r'$, $s'$ and $t'$ satisfying the following conditions:
\begin{gather*}
\tag{i}
0 \le r' \le r, \\
\tag{ii}
0 \le s' \le 2, \\
\tag{iii}
0 < r' + s' < r + 2, \\
\tag{iv}
\frac{\alpha + t}{r + 2} = \frac{\alpha + t'}{r' + s'} \iff \alpha = \frac{t(r' + s') - t'(r + 2)}{r + 2 - r' - s'}, \\
\tag{v}
\M^\alpha((s', r'), \, t')^\st \neq \emptyset, \\
\tag{vi}
\M((2 - s', r - r'), \, t - t') \neq \emptyset.
\end{gather*}
Since $\alpha > 0$, condition (iv) implies the inequality
\[
\tag{vii}
t' < \frac{t(r' + s')}{r + 2}.
\]
According to Lemma~\ref{M_nonempty}(i), condition (v) implies the inequality
\[
\tag{viii}
r' + s' - r' s' \le t'.
\]
Assume that $s' = 0$.
Using inequalities (viii), (vii), $t \le 1$ and (i), we obtain the inequalities
\[
r' < \frac{t r'}{r + 2} \le \frac{r'}{r + 2} \le \frac{r}{r + 2} < 1,
\]
forcing $r' = 0$, forcing $r' + s' = 0$.
This contradicts condition (iii).
Assume that $s' = 1$.
Using inequalities (viii), (vii), $t \le 1$ and (i), we obtain the inequalities
\[
1 < \frac{t(r' + 1)}{r + 2} \le \frac{r' + 1}{r + 2} \le \frac{r + 1}{r + 2} < 1.
\]
This yields a contradiction.
We have proved that $s' \ge 2$.
Analogously, inequalities (viii), (vii), $t \le 1$ and (ii) imply the inequality $r' \ge 2$.
From condition (ii) we deduce that $s' = 2$.
According to Lemma~\ref{M_nonempty}(ii),
the condition $\M^\alpha((2, r'), t')^\st \neq \emptyset$ is equivalent to the inequality $2 - r' \le t'$.
According to \cite[Proposition 10]{ballico_huh}, the condition $\M((0, r - r'), \, t - t') \neq \emptyset$
is equivalent to the equation $t - t' = d(r - r')$ for some $d \in \ZZ$.
Thus, $\alpha \in \Asing((2, r), t)$ if and only if there are integers $r'$ and $d$ satisfying the following conditions:
\begin{gather*}
\tag{ix}
2 \le r' \le r - 1, \\
\tag{x}
\alpha = d(r + 2) - t, \\
\tag{xi}
2 - r' \le t - d(r - r').
\end{gather*}
By hypothesis, $\alpha$ is positive, hence condition (x) implies the inequality
\[
\tag{xii}
\frac{t}{r + 2} < d.
\]
By hypothesis, $t \ge 2 - r$, hence $d > (2 - r) / (r + 2) > -1$.
Condition (xi) becomes the condition
\[
\tag{xiii}
\frac{d r - t + 2}{d + 1} \le r'.
\]
The first inequality in (ix) is superfluous because it is implied by conditions (xii) and (xiii).
Indeed, if $r' \le 1$, then condition (xiii) would imply the inequality $d(r - 1) + 1 \le t$, forcing $d = 0$ and $t = 1$.
This would contradict condition (xii).
From (ix) and (xiii) we obtain the inequality
\[
\tag{xiv}
d \le r + t - 3.
\]
In conclusion,
\[
\Asing((2, r), t) = \{ d(r + 2) - t \mid d \in \ZZ,\ d \ \text{satisfies conditions (xii) and (xiv)} \}.
\]
For a singular value $\alpha_d^{} = d(r + 2) - t$, we have the decomposition $Y_d^{} = \Sqcup_{r'} Y_{d, r'}^{}$,
in which $r'$ runs over all integers satisfying condition (xiii) and the condition $r' \le r - 1$.
\end{proof}

\noindent
Under the provisions of Proposition~\ref{singular},
and under the assumption that singular values exist,
the largest singular value relative to the polynomial $r x_1^{} + 2 x_2^{} + t$ is
\[
\alphamax((2, r), t) = (r + t - 3)(r + 2) - t.
\]
In the remaining part of this section we shall focus on the moduli spaces $\M^\alpha((2, r), t)$ with $r \ge 2$ and $t \le 1$.
We need to impose the condition $t \ge 2 - r$, otherwise, in view of Lemma~\ref{M_nonempty}(i),
the moduli space would be empty.
If $t = 2 - r$, then, by Proposition~\ref{singular}, there are no singular values of $\alpha$.
In view of Proposition~\ref{M_infinity}, for all $\alpha \in (0, \infty)$, we have the isomorphisms
\begin{equation}
\label{r+t=2}
\M^\alpha((2, r), 2 - r) = \M^\alpha((2, r), 2 - r)^\st \simeq \M^\infty((2, r), 2 - r) \simeq \left| \Osh(2, r) \right|.
\end{equation}
In the sequel we shall assume that $r + t \ge 3$, $r \ge 2$ and $t \le 1$.
Recall the sets $Y_{d, r'}^{}$ from Proposition~\ref{singular} and note that $d \ge 0$.
According to Theorem~\ref{euler_loci}, for $d = 0$, $1$ or $2$, we have the following formulas:
\begin{align}
\label{d=0}
\eulertop(\fiberL Y_{0, r'}^{}) - \eulertop(\fiberR Y_{0, r'}^{})
& = \left( \!\! \binom{3 + r - r'}{r - r'} - \binom{3 + r - r'}{r - r'} \!\! \right) \eulertop(\M^{-t}((2, r'), t)^\st) = 0, \\
\label{d=1}
\eulertop(\fiberL Y_{1, r'}^{}) - \eulertop(\fiberR Y_{1, r'}^{})
& = \left( \!\! \binom{3 + r - r'}{r - r'} - \binom{5 + r - r'}{r - r'} \!\! \right) \eulertop(\M^{r + 2 - t}((2, r'), \, t - r + r')^\st), \\
\label{d=2}
\eulertop(\fiberL Y_{2, r'}^{}) - \eulertop(\fiberR Y_{2, r'}^{})
& = \left( \!\! \binom{3 + r - r'}{r - r'} - \binom{7 + r - r'}{r - r'} \!\! \right) \eulertop(\M^{2 r + 4 - t}((2, r'), \, t - 2 r + 2 r')^\st).
\end{align}

\begin{proposition}
\label{0+}
Consider integers $r \ge 2$ and $t \le 1$ satisfying the condition $r + t \ge 3$.
Then
\[
\eulertop(\M^{0+}((2, r), t))
= \eulertop(\M^\infty((2, r), t)) + \sum_{1 \le d \le r + t - 3} (\eulertop(\fiberL Y_d^{}) - \eulertop(\fiberR Y_d^{})).
\]
\end{proposition}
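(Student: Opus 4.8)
The plan is to move the parameter $\alpha$ from $\infty$ down to $0^{+}$, crossing one singular wall at a time, and to record the change in the topological Euler characteristic at each wall by means of the wall-crossing diagram~\eqref{wall-crossing}. By Proposition~\ref{singular} the singular values relative to $P(x_1^{}, x_2^{}) = r x_1^{} + 2 x_2^{} + t$ are precisely the numbers $\alpha_d^{} = d(r + 2) - t$ for integers $d$ with $t/(r + 2) < d \le r + t - 3$; in particular every such $d$ is $\ge 0$, and $\alpha_d^{}$ is strictly increasing in $d$, so the largest singular value is $\alpha_{r + t - 3}^{} = \alphamax((2, r), t)$. As noted supra diagram~\eqref{wall-crossing} (and in its refinement by the first Chern class), $\M^\alpha((2, r), t)$ is constant on each open interval delimited by two consecutive singular values, it equals $\M^{0+}((2, r), t)$ for $\alpha$ just above $0$ and equals $\M^\infty((2, r), t)$ for $\alpha > \alphamax((2, r), t)$. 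Telescoping over the walls therefore gives
\[
\eulertop(\M^{0+}((2, r), t)) - \eulertop(\M^\infty((2, r), t))
= \sum_d \big( \eulertop(\M^{\alpha_d - \epsilon}((2, r), t)) - \eulertop(\M^{\alpha_d + \epsilon}((2, r), t)) \big),
\]
the sum running over all singular values $\alpha_d^{}$.

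Next I would analyse a single wall $\alpha = \alpha_d^{}$. In the wall-crossing diagram~\eqref{wall-crossing} the morphism $\rho_{\alpha_d - \epsilon}^{}$ restricts to an isomorphism over the open subscheme $\M^{\alpha_d}((2, r), t)^\st$: a closed point of the latter is represented by an $\alpha_d^{}$-stable system $\Lambda$, whose $\alpha_d^{}$-S-equivalence class reduces to its isomorphism class (its Jordan--H\"older filtration has length one) and which remains $(\alpha_d - \epsilon)$-stable for $\epsilon$ small, $\alpha$-stability of a fixed $\Lambda$ being an open condition on $\alpha$; conversely any $(\alpha_d - \epsilon)$-stable system mapping to $\langle \Lambda \rangle$ is $\alpha_d^{}$-semi-stable, hence isomorphic to $\Lambda$. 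The same applies to $\rho_{\alpha_d + \epsilon}^{}$. Since $\M^{\alpha_d}((2, r), t)^\st$ is open with closed complement $Y_d^{} = \M^{\alpha_d}((2, r), t)^\pss$, additivity of the topological Euler characteristic yields
\[
\eulertop(\M^{\alpha_d \pm \epsilon}((2, r), t)) = \eulertop(\M^{\alpha_d}((2, r), t)^\st) + \eulertop(\rho_{\alpha_d \pm \epsilon}^{-1}(Y_d^{})),
\]
so the $\alpha_d^{}$-term of the telescoping sum equals $\eulertop(\rho_{\alpha_d - \epsilon}^{-1}(Y_d^{})) - \eulertop(\rho_{\alpha_d + \epsilon}^{-1}(Y_d^{}))$. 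Unwinding the cartesian diagrams that define $Y_\Lt^{}$ and $Y_\Rt^{}$, using the bijection $\gamma$ from~\eqref{gamma} and the decomposition $Y_d^{} = \bigsqcup_{r'} Y_{d, r'}^{}$ of Proposition~\ref{singular}, this difference is exactly $\eulertop(\fiberL Y_d^{}) - \eulertop(\fiberR Y_d^{})$; here the passage from the fibrewise description to the total preimage is handled by additivity over the finite decomposition, together with Lemma~\ref{multiplicative} applied to the pieces on which the fibre Euler characteristic is constant.

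It remains to adjust the range of summation. The value $\alpha_0^{} = -t$ is a singular value precisely when $t \le 0$; in that case equation~\eqref{d=0} gives $\eulertop(\fiberL Y_{0, r'}^{}) = \eulertop(\fiberR Y_{0, r'}^{})$ for every admissible $r'$, hence $\eulertop(\fiberL Y_0^{}) = \eulertop(\fiberR Y_0^{})$ by additivity, so this term drops out of the sum. What survives is $\sum_{1 \le d \le r + t - 3} \big( \eulertop(\fiberL Y_d^{}) - \eulertop(\fiberR Y_d^{}) \big)$, which is the asserted formula. The step that will require the most care is the second paragraph: namely, checking that $\rho_{\alpha_d \pm \epsilon}^{}$ is an isomorphism over the stable locus, and that the preimage of the properly semi-stable locus really has the Euler characteristic $\eulertop(\fiberL Y_d^{})$ resp. $\eulertop(\fiberR Y_d^{})$ built into the cartesian diagrams — i.e. matching the stratification of $Y_d^{}$ induced by $\gamma$ with the left and right fibres computed pointwise in Theorem~\ref{euler_fibers}.
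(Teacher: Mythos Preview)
Your proposal is correct and follows the same route as the paper: telescope over the singular walls $\alpha_d^{}$ listed in Proposition~\ref{singular}, observe that at each wall the left/right preimages of the properly semi-stable locus are exactly $\fiberL Y_d^{}$ and $\fiberR Y_d^{}$ while $\rho_{\alpha_d \pm \epsilon}^{}$ is bijective over the stable locus, and then drop the $d = 0$ term using equation~\eqref{d=0}. The paper's proof is much terser (it simply says ``follows from the additivity of the topological Euler characteristic''), so your second paragraph is a welcome expansion. One small remark: your closing caveat about needing Lemma~\ref{multiplicative} and Theorem~\ref{euler_fibers} to ``match the stratification'' is unnecessary here --- $\fiberL Y_d^{}$ and $\fiberR Y_d^{}$ are by definition the (reduced) preimages of $Y_d^{}$, so their Euler characteristics enter the telescoping sum directly, without any pointwise fibre computation; those tools are used only later to evaluate the individual terms.
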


\begin{proof}
If $t = 0$ or $1$, then, according to Proposition~\ref{singular}, the singular values are $\alpha_d^{}$ with $1 \le d \le r + t - 3$,
and the proposition follows from the additivity of the topological Euler characteristic.
If $t < 0$, then there is also the singular value $\alpha_0^{} = -t$.
Adding equation~\eqref{d=0} over all indices $r'$, we obtain the formula $\eulertop(\fiberL Y_0^{}) - \eulertop(\fiberR Y_0^{}) = 0$.
On the r.h.s.\ there is no contribution from $Y_0^{}$.
\end{proof}

\noindent
Assume now that $d \ge 3$.
At Theorem~\ref{euler_loci} we have an adequate formula for the Euler characteristic of the right fiber, but not for the left fiber.
At the next proposition we address this shortcoming in the simplest case,
namely the case in which $Y_d^{} = Y_{d, r -1}^{}$.
The reason why the case when $r - r' \ge 2$ is much more difficult is the following.
From Proposition~\ref{vartheta} it transpires that $\M((0, 1), d)$ consists only of stable points,
while $\M((0, r - r'), d(r - r'))$ consists only of properly semi-stable points if $r - r' \ge 2$.
In section~\ref{c=10} we shall discuss one situation in which $r - r' = 2$.

\begin{proposition}
\label{r-1}
Consider integers $r \ge 5$ and $t \le 1$ satisfying the condition $r + t \ge 6$.
Let $d$ be an integer satisfying the conditions $3 \le d \le r + t - 3$ and $r + t < 2 d + 4$.
Recall Notation~\ref{nested}.
Put $l = r + t - d - 3$ and $\bar{k} = \min \{ d - 3, l \}$.
Then we have the following equations:
\begin{align}
\tag{i}
\eulertop(\fiberR Y_d^{}) & = (2 d + 4) \eulertop(\Hilb((2, r - 1), l)); \\
\tag{ii}
\eulertop(\fiberL Y_d^{}) & = 4 \eulertop(\Hilb((2, r - 1), l)) + \sum_{0 \le k \le \bar{k}} (d - 2 - k) \eulertop(\Hilb(k, (2, r - 1), l)).
\end{align}
\end{proposition}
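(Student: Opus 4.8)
The plan is to use the hypothesis $r + t < 2d + 4$ to collapse the decomposition of Proposition~\ref{singular} to a single piece and to identify that piece with a flag Hilbert scheme. First I would note that the range $\tfrac{dr - t + 2}{d+1} \le r' \le r - 1$ in Proposition~\ref{singular} reduces to the single value $r' = r - 1$, since $\tfrac{dr - t + 2}{d+1} > r - 2$ is an elementary rearrangement of $2d + 4 > r + t$; hence $Y_d = Y_{d, r-1} = \M^{\alpha_d}((2, r-1), t-d)^\st \times \M((0,1), d)$. Next I would show $\M^{\alpha_d}((2, r-1), t-d)^\st \simeq \Hilb((2, r-1), l)$: applying Proposition~\ref{singular} to the polynomial $(r-1)x_1 + 2x_2 + (t-d)$ (its hypotheses hold because $t-d \le -2 \le 1$ and $(r-1)+(t-d) = l+2 \ge 2$), the largest of its singular values is $\alphamax((2,r-1), t-d)$, and a direct subtraction gives
\[
\alpha_d - \alphamax((2,r-1), t-d) = (r+1)\,(2d + 4 - r - t) > 0 ,
\]
so $\alpha_d$ lies beyond all walls of the $(2, r-1)$-problem; therefore $\M^{\alpha_d}((2, r-1), t-d)^\st = \M^\infty((2, r-1), t-d)$, which by Proposition~\ref{M_infinity} is $\Hilb((2, r-1), (t-d) - (r-1) - 2 + 2(r-1)) = \Hilb((2, r-1), l)$.

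Part (i) is then immediate. By Theorem~\ref{euler_loci}(i) with $s = 2$, $r' = r-1$ (note $d \ge 3 \ge 0$) and the identification above,
\[
\eulertop(\fiberR Y_d) = \eulertop(\fiberR Y_{d, r-1}) = \binom{2d + 4}{1}\,\eulertop(\M^{\alpha_d}((2, r-1), t-d)^\st) = (2d + 4)\,\eulertop(\Hilb((2, r-1), l)) .
\]

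For part (ii), Theorem~\ref{euler_loci}(ii) does not apply since $d \ge 3$, so I would stratify the base instead. By Proposition~\ref{vartheta}, $\M((0,1), d) \simeq \PP^1$ and every closed point is $\langle \Theta \rangle$ with $\Theta = \Osh_L(d-1,0)$ a \emph{stable} sheaf on a line $L$ of degree $(0,1)$; hence, as established at the start of this section via Propositions~\ref{fibers_stable}(ii) and \ref{ext_surface}(ii), for $\langle \Lambda' \rangle \in \M^{\alpha_d}((2, r-1), t-d)^\st$ corresponding to $(C, Z) \in \Hilb((2, r-1), l)$ with $\F' = \Osh_C(Z)$ one has
\[
\eulertop(\fiberL(\langle \Lambda' \rangle, \langle \Theta \rangle)) = 2 + \hom(\Osh_C(Z),\, \Osh_L(d-3, 0)) .
\]
Proposition~\ref{hom_pure} evaluates this Hom-space: it is $0$ if $L \nsubseteq C$, and if $L \subseteq C$, writing $C = C' \cup L$ with $C'$ of degree $(2, r-2)$ and $Z_0 \subset L$ the length-$k$ subscheme described there, it equals $\max\{0,\, d - 2 - k\}$. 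I then decompose $Y_d \simeq \Hilb((2, r-1), l) \times \PP^1$ into the open locus $\{L \nsubseteq C\}$ and the locally closed loci $T_k = \{L \subseteq C,\ \length Z_0 = k\}$; by Notation~\ref{nested} the forgetful map $(C, Z, L) \mapsto ((C, Z), L)$ is a bijection from $\Hilb(k, (2, r-1), l)$ onto $T_k$, and $\length Z_0 \le \length Z = l$ forces $T_k = \emptyset$ for $k > l$. Applying Lemma~\ref{multiplicative} on each stratum, together with additivity of $\eulertop$, $\eulertop(\PP^1) = 2$ and $\eulertop(Y_d) = 2\,\eulertop(\Hilb((2, r-1), l))$, yields
\[
\eulertop(\fiberL Y_d) = 4\,\eulertop(\Hilb((2, r-1), l)) + \sum_{0 \le k \le \bar{k}} (d - 2 - k)\,\eulertop(\Hilb(k, (2, r-1), l)) ,
\]
which is the claimed formula.

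The substantial geometry has already been done in Theorem~\ref{euler_loci} and Proposition~\ref{hom_pure}, so the arithmetic identities (the collapse to $r' = r-1$, the wall comparison $\alpha_d > \alphamax$, the index bookkeeping in the final sum) are routine. The only point requiring genuine care is the stratification in part (ii): one must check that the left-fibre Euler characteristic is constant along each $T_k$ (so Lemma~\ref{multiplicative} applies) and that $T_k$ really is the nested Hilbert scheme $\Hilb(k, (2, r-1), l)$ — both of which are delivered by Proposition~\ref{hom_pure} and by the closed-point description in Notation~\ref{nested}, respectively. I do not expect any new obstacle beyond keeping the degree conventions of these cited results aligned.
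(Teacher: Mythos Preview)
Your proof is correct and follows essentially the same route as the paper's: the collapse to $r'=r-1$ via the inequality $r+t<2d+4$, the wall comparison $\alpha_d>\alphamax((2,r-1),t-d)$ giving $\M^{\alpha_d}((2,r-1),t-d)^\st\simeq\Hilb((2,r-1),l)$, formula~(i) from Theorem~\ref{euler_loci}(i), and formula~(ii) by stratifying $Y_d\simeq\Hilb((2,r-1),l)\times|\Osh(0,1)|$ according to $\hom(\Osh_C(Z),\Osh_L(d-3,0))$ via Proposition~\ref{hom_pure} and summing with Lemma~\ref{multiplicative}. The only cosmetic difference is that the paper writes the baseline contribution as $2\,\eulertop(Y_d)$ before substituting $\eulertop(Y_d)=2\,\eulertop(\Hilb((2,r-1),l))$.
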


\begin{proof}
The inequality $r + t < 2 d + 4$ is equivalent to the inequality $r - 2 < (dr - t + 2) / (d + 1)$.
Thus, in Proposition~\ref{singular} we have only the possibility $r' = r - 1$, forcing $Y_d^{} = Y_{d, r- 1}^{}$.
The inequality $r + t < 2 d + 4$ is also equivalent to the inequality
\[
\alphamax((2, r - 1), t - d) = (r + t - d - 4)(r + 1) - t + d  < d(r + 2) - t = \alpha_d^{}.
\]
Thus, employing Propositions~\ref{M_infinity} and \ref{vartheta}, we obtain the description
\[
Y_d^{} = \M^\infty((2, r - 1), \, t - d) \times \M((0, 1), d) \simeq \Hilb((2, r - 1), l) \times \left| \Osh(0, 1) \right|.
\]
Formula (i) follows from Theorem~\ref{euler_loci}(i).
According to Propositions~\ref{fibers_stable}(ii) and \ref{ext_surface}(ii),
for a point $(C, Z, L) \in Y_d^{}$ we have the equation
\[
\eulertop(\fiberL(C, Z, L)) = 2 + \hom(\Osh_C^{}(Z), \, \Osh_L^{}(d - 3, 0)).
\]
Applying Proposition~\ref{hom_pure}, we obtain the equation
\[
\eulertop(\fiberL(C, Z, L)) =
\begin{cases}
d - k & \text{if} \ (C, Z, L) \in \Hilb(k, (2, r - 1), l) \ \text{for} \ k = 0, \dots, \bar{k}, \\
2 & \text{otherwise}.
\end{cases}
\]
Applying Lemma~\ref{multiplicative} and using the additivity of the Euler characteristic leads us to the formula
\[
\eulertop(\fiberL Y_d^{}) = 2  \eulertop(Y_d^{}) + \sum_{0 \le k \le \bar{k}} (d - 2 - k) \eulertop(\Hilb(k, (2, r - 1), l)).
\]
This proves formula (ii).
\end{proof}

\noindent
Under the provisions of Proposition~\ref{r-1} we have the formula
\begin{equation}
\label{d>2}
\eulertop(\fiberL Y_d^{}) - \eulertop(\fiberR Y_d^{}) = -2 d \eulertop(\Hilb((2, r - 1), l))
+ \sum_{0 \le k \le \bar{k}} (d - 2 - k) \eulertop(\Hilb(k, (2, r - 1), l)).
\end{equation}

\noindent
In section~\ref{c=10} we shall need a description of the set of stable coherent systems
corresponding to the largest singular value of $\alpha$.

\begin{proposition}
\label{stable_max}
We adopt the assumptions of Proposition~\ref{singular}.
We also assume that $r \ge 3$ and $r + t \ge 3$.
We let $\alpha = \alphamax((2, r), t)$.
We claim that
\[
\M^\alpha((2, r), t)^\st = \Hilb((2, r), r + t - 2) \smallsetminus \{ (C, Z) \mid C = C' \cup L, \ \deg C' = (2, r - 1), \ Z \subset L \}.
\]
\end{proposition}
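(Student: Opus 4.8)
The plan is to describe the wall $\alpha=\alphamax((2,r),t)$ explicitly and then match the resulting loci with loci inside a flag Hilbert scheme. Put $d=r+t-3$, so $\alpha=\alpha_d=d(r+2)-t$. First I would observe that, in the notation of Proposition~\ref{singular}, the lower bound $\tfrac{dr-t+2}{d+1}$ equals $r-1$ when $d=r+t-3$, so the only summand is $Y_d=Y_{d,r-1}=\M^{\alpha_d}((2,r-1),\,3-r)^\st\times\M((0,1),d)$. By~\eqref{r+t=2}, applied with $r-1$ in place of $r$, together with Proposition~\ref{M_infinity}, the first factor is $\left|\Osh(2,r-1)\right|$, its point over a curve $C'$ of degree $(2,r-1)$ being $(\H^0(\Osh_{C'}),\Osh_{C'})$; by Proposition~\ref{vartheta} the second factor is $\left|\Osh(0,1)\right|$, its point over a line $L$ of degree $(0,1)$ being $\langle\Osh_L(d-1,0)\rangle$. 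In particular $\Theta=\Osh_L(d-1,0)$ is \emph{stable}, so Propositions~\ref{fibers}(i) and~\ref{fibers_stable}(i) apply verbatim.

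Next I would set up the identification. Since $\alpha$ is the largest singular value, $\M^\beta((2,r),t)$ is constant for $\beta\in(\alpha,\infty)$ and, by Proposition~\ref{M_infinity}, equals $\M^\infty((2,r),t)\simeq\Hilb((2,r),r+t-2)$. Every $\alpha$-stable coherent system is $\beta$-stable for $\beta>\alpha$ close to $\alpha$, hence $\infty$-stable, while every $\infty$-stable coherent system is $\alpha$-semi-stable (as noted above diagram~\eqref{wall-crossing}). Therefore $\rho_{\alpha+\epsilon}$ restricts to a bijective morphism of varieties $\M^\infty((2,r),t)\smallsetminus\fiberR Y_d\to\M^\alpha((2,r),t)^\st$, and the proposition reduces to the claim that, under the isomorphism of Proposition~\ref{M_infinity},
\[
\fiberR Y_d=\{(C,Z)\mid C=C'\cup L,\ \deg C'=(2,r-1),\ \deg L=(0,1),\ Z\subset L\}.
\]
By Proposition~\ref{fibers}(i) --- the semi-stability condition there reducing to non-splitness because $\Osh_L(d-1,0)$ is stable, cf.\ Proposition~\ref{fibers_stable} --- a pair $(C,Z)$ lies in $\fiberR Y_d$ exactly when $\F=\Osh_C(Z)$ fits into a non-split extension $0\to\Osh_L(d-1,0)\to\F\to\Osh_{C'}\to 0$ for some such $C'$ and $L$.

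For the inclusion ``$\supseteq$'' I would argue that, when $C=C'\cup L$ and $Z\subset L$, the largest subsheaf $\F_L\subseteq\F$ whose support is contained in $L$ is isomorphic to $\Osh_L(d-1,0)$: it is pure of dimension one and generically of rank one on $L\simeq\PP^1$, it contains $\Kersh(\Osh_C\to\Osh_{C'})\simeq\Osh_L(-2,0)$ (the sequence of Proposition~\ref{hom_pure}(ii) with $s=2$), and since $Z\subset L$ a local computation using~\eqref{O_C_Z} gives $\euler(\F_L)=-1+\length Z=d$, so $\F_L\simeq\Osh_L(d-1,0)$. Then $(\{0\},\F_L)$ is a proper subsystem of $\Lambda=(\H^0(\Osh_C),\F)$ with $\p_{\alpha_d}((\{0\},\F_L))=d=\p_{\alpha_d}(\Lambda)$, so $\Lambda$ is properly $\alpha_d$-semi-stable and $(C,Z)\in\fiberR Y_d$.

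For ``$\subseteq$'', given the extension $0\to\Osh_L(d-1,0)\to\F\to\Osh_{C'}\to 0$, comparing supports gives $C=C'\cup L$, and the purity of $\Osh_{C'}$ shows that $\Osh_L(d-1,0)$ is the largest subsheaf of $\F$ supported on $L$. Combining this with~\eqref{O_C_Z} and the sequence $0\to\Osh_L(-2,0)\to\Osh_C\to\Osh_{C'}\to 0$, a length count identifies $\Extsh_{\Osh_X}^2(\Osh_Z,\Osh_X)\simeq\F/\Osh_C$ with $\Osh_L(d-1,0)/\Osh_L(-2,0)$, the structure sheaf $\Osh_D$ of an effective divisor $D\subset L$ of degree $\length Z$. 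Applying the duality $\Extsh_{\Osh_X}^2(-,\Osh_X)$ on dimension-zero sheaves and using that $D$, being a divisor on a smooth curve, is a local complete intersection, one finds $\Osh_Z$ isomorphic as an $\Osh_X$-module to a line bundle on $D$; comparing annihilators forces $I_Z=I_D$, i.e.\ $Z=D\subset L$. I expect this last step --- recovering $Z$ \emph{scheme-theoretically}, not just set-theoretically, inside $L$ --- to be the main obstacle, since it hinges on the precise identification of $\Extsh_{\Osh_X}^2(\Osh_Z,\Osh_X)$ and on careful bookkeeping with Ext-duality; once $C=C'\cup L$ and $Z\subset L$ are in hand, combining the two inclusions finishes the proof.
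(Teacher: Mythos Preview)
Your overall strategy matches the paper's: identify $Y_d = Y_{d,r-1}$, reduce to computing $\fiberR Y_d$ inside $\Hilb((2,r), r+t-2)$, and verify both inclusions by comparing the extension $0 \to \Osh_L(d-1,0) \to \F \to \Osh_{C'} \to 0$ with the canonical sequences for $\Osh_C$ and $\Osh_C(Z)$.

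There is, however, a gap in your execution of both inclusions. Your claim that ``the purity of $\Osh_{C'}$ shows that $\Osh_L(d-1,0)$ is the largest subsheaf of $\F$ supported on $L$'' fails when $L \subset C'$ (equivalently, when $2L \subset C$): in that case $\Osh_{C'}$ itself has a nonzero subsheaf scheme-theoretically supported on $L$, namely the kernel of $\Osh_{C'} \to \Osh_{C''}$ for $C' = C'' \cup L$, so purity alone does not force maximality. Likewise, in your ``$\supseteq$'' argument the rank-one and Euler-characteristic claims about $\F_L$ are not justified in this degenerate case by the sketch you give. The paper sidesteps the issue entirely. For ``$\subseteq$'' it uses the coherent-system structure: the map $\Lambda \to \Lambda'$ is an isomorphism on the order-one part, so the composition $\Osh_C \hookrightarrow \F \to \Osh_{C'}$ sends the generating global section to a unit and is therefore surjective with kernel $\Osh_L(-2,0)$; the desired commutative ladder then follows immediately, with no need to characterize $\Osh_L(d-1,0)$ intrinsically inside $\F$. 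For ``$\supseteq$'' it builds $\F$ directly as the pushout of $\Osh_C$ along $\Osh_L(-2,0) \hookrightarrow \Osh_L(d-1,0)$ and invokes the uniqueness of $\Osh_C(Z)$ from~\eqref{O_C_Z}.

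Finally, the step you flag as the main obstacle --- recovering $Z \subset L$ scheme-theoretically --- is actually the easy part. Once $\Extsh_{\Osh_X}^2(\Osh_Z,\Osh_X)$ is exhibited as a quotient of $\Osh_L(d-1,0)$, it equals $\Osh_D$ for a divisor $D \subset L$; since $D$ is a local complete intersection, $\Osh_D$ is self-dual, and the involutivity of $\Extsh_{\Osh_X}^2(-,\Osh_X)$ on zero-dimensional sheaves gives $\Osh_Z \simeq \Osh_D$, hence $Z = D \subset L$.
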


\begin{proof}
We have $\alpha = \alpha_d^{}$ with $d = r + t - 3$.
We have $\dfrac{d r - t + 2}{d + 1} = r - 1$, hence $r' = r - 1$, forcing
\begin{align*}
Y_d^{} = Y_{d, r - 1}^{} & = \M^\alpha((2, r - 1), t - d)^\st \times \M((0, 1), d) \\
& = \M^\alpha((2, r - 1), 3 - r)^\st \times \M((0, 1), d) \simeq \left| \Osh(2, r - 1) \right| \times \M((0, 1), d).
\end{align*}
The last isomorphism follows from equation~\eqref{r+t=2}.
We can identify $\M^\alpha((2, r), t)^\st$ with
\[
\M^\infty((2, r), t) \smallsetminus \fiberR Y_d^{} \simeq \Hilb((2, r), r + t - 2) \smallsetminus \fiberR Y_d^{}.
\]
The last isomorphism follows from Proposition~\ref{M_infinity}.
It remains to identify $\fiberR Y_d^{}$.
Assume that $\Lambda = (\H^0(\Osh_C^{}), \Osh_C^{}(Z))$ gives a closed point in $\fiberR Y_d^{}$.
According to Proposition~\ref{fibers}(i), we have a non-split extension
\[
0 \lra \Lambda'' \lra \Lambda \lra \Lambda' \lra 0,
\]
where $\Lambda' = (\H^0(\Osh_{C'}^{}), \Osh_{C'}^{})$ gives a point in $\M^\infty((2, r - 1), 3 - r)$ and
$\Lambda'' = (\{ 0 \}, \Osh_L^{}(d - 1, 0))$ gives a point in $\M((0, 1), d)$.
By hypothesis, $\H^0(\Osh_C^{})$ maps isomorphically to $\H^0(\Osh_{C'}^{})$, hence $\Osh_C^{}$ maps surjectively to $\Osh_{C'}^{}$.
We obtain the exact diagram
\[
\xymatrix
{
& 0 \ar[d] & 0 \ar[d] \\
0 \ar[r] & \Osh_L^{}(-2, 0) \ar[r] \ar[d] & \Osh_C^{} \ar[r] \ar[d] & \Osh_{C'}^{} \ar[r] \ar@{=}[d] & 0 \\
0 \ar[r] & \Osh_L^{}(d - 1, 0) \ar[r] \ar[d] & \Osh_C^{}(Z) \ar[r] \ar[d] & \Osh_{C'}^{} \ar[r] & 0 \\
& \Extsh_{\Osh_X}^2(\Osh_Z^{}, \Osh_X^{}) \ar@{=}[r] \ar[d] & \Extsh_{\Osh_X}^2(\Osh_Z^{}, \Osh_X^{}) \ar[d] \\
& 0 & 0
}.
\]
The exactness of the first row follows from Proposition~\ref{hom_pure}(ii).
The second column is the exact sequence~\eqref{O_C_Z}.
The sheaf $\Osh_Z^{}$ is self-dual, i.e.\ $\Osh_Z^{} \simeq \Extsh_{\Osh_X}^2(\Osh_Z^{}, \Osh_X^{})$.
This follows from the fact that $\Extsh_{\Osh_X}^2(\Osh_Z^{}, \Osh_X^{})$ is a quotient of $\Osh_L^{}$.
We have proved that $C = C' \cup L$ and $Z \subset L$.
Conversely, we assume that $C = C' \cup L$, where $\deg C' = (2, r - 1)$,
and $Z \subset L$, where $\length Z = r + t - 2 = d + 1$.
Our aim is to show that $\Lambda = (\H^0(\Osh_C^{}), \Osh_C^{}(Z))$ gives a point in $\fiberR Y_d^{}$.
Let $\F$ be defined by the push-out diagram
\[
\xymatrix
{
& 0 \ar[d] & 0 \ar[d] \\
0 \ar[r] & \Osh_L^{}(-2, 0) \ar[r] \ar[d] & \Osh_C^{} \ar[r] \ar[d] & \Osh_{C'}^{} \ar[r] \ar@{=}[d] & 0 \\
0 \ar[r] & \Osh_L^{}(d - 1, 0) \ar[r] \ar[d] & \F \ar[r] \ar[d] & \Osh_{C'}^{} \ar[r] & 0 \\
& \Osh_Z^{} \ar@{=}[r] \ar[d] & \Osh_Z^{} \ar[d] \\
& 0 & 0
}.
\]
Both $\Osh_L^{}(d - 1, 0)$ and $\Osh_{C'}^{}$ are pure,
hence also $\F$ is pure.
Since $\Osh_Z^{}$ is self-dual,
the middle column is the exact sequence~\eqref{O_C_Z}.
From \cite[Lemma 2.2]{advances} we deduce that $\F \simeq \Osh_C^{}(Z)$.
We conclude that $\Lambda$ is an extension of $\Lambda'$ by $\Lambda''$, so it gives a point in $\fiberR Y_d^{}$.
\end{proof}

\subsection{The case when $r + t \le 9$}
\label{c<10}
In this subsection we examine the Euler characteristic of $\M^{0+}((2, r), \, c - r)$ for $3 \le c \le 9$ and $r \ge c - 1$.
We apply Proposition~\ref{0+} and we use induction on $c$.
If $d = 1$ or $2$, then $\eulertop(\fiberL Y_d^{}) - \eulertop(\fiberR Y_d^{})$ can be computed
using formulas~\eqref{d=1} and \eqref{d=2}
and substituting the values of $\eulertop(\M^{\alpha_d}((2, r'), c' - r')^\st)$ with $c' < c$.
If $d = 3$, then the hypotheses of Proposition~\ref{r-1} are satisfied, so we have formula~\eqref{d>2}.
The details of the calculations are straightforward, so we present only the results:
\begin{align*}
\eulertop(\M^{0+}((2, r), \, 3 - r)) & = \eulertop(\Hilb((2, r), 1)), \\
\eulertop(\M^{0+}((2, r), \, 4 - r)) & = \eulertop(\Hilb((2, r), 2)) \\
& \phantom{{} =} {} - \phantom{1} 2 \eulertop(\Hilb((2, r - 1), 0)), \\
\eulertop(\M^{0+}((2, r), \, 5 - r)) & = \eulertop(\Hilb((2, r), 3)) \\
& \phantom{{} = } {} - \phantom{1} 4 \eulertop(\Hilb((2, r - 1), 0)) \\
& \phantom{{} = } {} - \phantom{1} 2 \eulertop(\Hilb((2, r - 1), 1)), \\
\eulertop(\M^{0+}((2, r), \, 6 - r)) & = \eulertop(\Hilb((2, r), 4)) \\
& \phantom{{} =} {} - \phantom{1} 6 \eulertop(\Hilb((2, r - 1), 0)) + \phantom{1}  \eulertop(\Hilb(0, (2, r - 1), 0)) \\
& \phantom{{} =} {} - \phantom{1} 4 \eulertop(\Hilb((2, r - 1), 1)) \\
& \phantom{{} =} {} - \phantom{1} 2 \eulertop(\Hilb((2, r - 1), 2)) \\
& \phantom{{} =} {} + \phantom{11} \eulertop(\Hilb((2, r - 2), 0)), \\
\eulertop(\M^{0+}((2, r), \, 7 - r)) & = \eulertop(\Hilb((2, r), 5)) \\
& \phantom{{} =} {} - \phantom{1} 8 \eulertop(\Hilb((2, r - 1), 0)) + 2 \eulertop(\Hilb(0, (2, r - 1), 0)) \\
& \phantom{{} =} {} - \phantom{1} 6 \eulertop(\Hilb((2, r - 1), 1)) + \phantom{1} \eulertop(\Hilb(0, (2, r - 1), 1)) \\
& \phantom{{} =} {} - \phantom{1} 4 \eulertop(\Hilb((2, r - 1), 2)) \\
& \phantom{{} =} {} - \phantom{1} 2 \eulertop(\Hilb((2, r - 1), 3)) \\
& \phantom{{} =} {} + \phantom{1} 8 \eulertop(\Hilb((2, r - 2), 0)) \\
& \phantom{{} =} {} + \phantom{11} \eulertop(\Hilb((2, r - 2), 1)), \\
\eulertop(\M^{0+}((2, r), \, 8 - r)) & = \eulertop(\Hilb((2, r), 6)) \\
& \phantom{{} =} {} - 10 \eulertop(\Hilb((2, r - 1), 0)) + 3 \eulertop(\Hilb(0, (2, r - 1), 0)) \\
& \phantom{{} =} {} - \phantom{1} 8 \eulertop(\Hilb((2, r - 1), 1)) + 2 \eulertop(\Hilb(0, (2, r - 1), 1)) \\
& \phantom{{} =} {} - \phantom{1} 6 \eulertop(\Hilb((2, r - 1), 2)) + \phantom{1} \eulertop(\Hilb(1, (2, r - 1), 1)) \\
& \phantom{{} =} {} - \phantom{1} 4 \eulertop(\Hilb((2, r - 1), 3)) + \phantom{1} \eulertop(\Hilb(0, (2, r - 1), 2)) \\
& \phantom{{} =} {} - \phantom{1} 2 \eulertop(\Hilb((2, r - 1), 4)) \\
& \phantom{{} =} {} + 18 \eulertop(\Hilb((2, r - 2), 0)) - 2 \eulertop(\Hilb(0, (2, r - 2), 0)) \\
& \phantom{{} =} {} + \phantom{1} 8 \eulertop(\Hilb((2, r - 2), 1)) \\
& \phantom{{} =} {} + \phantom{11} \eulertop(\Hilb((2, r - 2), 2)), \\
\eulertop(\M^{0+}((2, r), \, 9 - r)) & = \eulertop(\Hilb((2, r), 7)) \\
& \phantom{{} =} {} - 12 \eulertop(\Hilb((2, r - 1), 0)) + 4 \eulertop(\Hilb(0, (2, r - 1), 0)) \\
& \phantom{{} =} {} - 10 \eulertop(\Hilb((2, r - 1), 1)) + 3 \eulertop(\Hilb(0, (2, r - 1), 1)) \\
& \phantom{{} =} {} - \phantom{1} 8 \eulertop(\Hilb((2, r - 1), 2)) + 2 \eulertop(\Hilb(1, (2, r - 1), 1)) \\
& \phantom{{} =} {} - \phantom{1} 6 \eulertop(\Hilb((2, r - 1), 3)) + 2 \eulertop(\Hilb(0, (2, r - 1), 2)) \\
& \phantom{{} =} {} - \phantom{1} 4 \eulertop(\Hilb((2, r - 1), 4)) + \phantom{1} \eulertop(\Hilb(1, (2, r - 1), 2)) \\
& \phantom{{} =} {} - \phantom{1} 2 \eulertop(\Hilb((2, r - 1), 5)) + \phantom{1} \eulertop(\Hilb(0, (2, r - 1), 3)) \\
& \phantom{{} =} {} + 40 \eulertop(\Hilb((2, r - 2), 0)) - 8 \eulertop(\Hilb(0, (2, r - 2), 0)) \\
& \phantom{{} =} {} + 18 \eulertop(\Hilb((2, r - 2), 1)) - 2 \eulertop(\Hilb(0, (2, r - 2), 1)) \\
& \phantom{{} =} {} + \phantom{1} 8 \eulertop(\Hilb((2, r - 2), 2)) \\
& \phantom{{} =} {} + \phantom{11} \eulertop(\Hilb((2, r - 2), 3)) \\
& \phantom{{} =} {} - \phantom{1} 4 \eulertop(\Hilb((2, r - 3), 0)).
\end{align*}

\subsection{The case when $r + t = 10$}
\label{c=10}
In this subsection we examine the topological Euler characteristic of $\M^{0+}((2, r), 10 - r)$ for $r \ge 9$.
We apply Proposition~\ref{0+}.
If $d = 1$ or $2$, then $\eulertop(\fiberL Y_d^{}) - \eulertop(\fiberR Y_d^{})$ can be computed
using formulas~\eqref{d=1} and \eqref{d=2}
and substituting the values of $\eulertop(\M^{\alpha_d}((2, r'), c' - r')^\st)$ with $c' < 10$.
If $d = 4$, then the hypotheses of Proposition~\ref{r-1} are satisfied, so we have formula~\eqref{d>2}.
It remains to compute $\eulertop(\fiberL Y_3^{}) - \eulertop(\fiberR Y_3^{})$.
We have $Y_3^{} = Y_{3, r - 1}^{} \Sqcup \, Y_{3, r - 2}^{}$ and $\alpha_3^{} = 4 r - 4$.
We have
\[
Y_{3, r - 1}^{} = \M^{4 r - 4}((2, r - 1), \, 7 - r)^\st \times \M((0, 1), 3) \simeq \M^{4 r - 4}((2, r - 1), \, 7 - r)^\st \times \left| \Osh(0, 1) \right|.
\]
We have $\alphamax((2, r - 1), 7 - r) = 4 r - 4 = \alpha_3^{}$, hence, in view of Proposition~\ref{stable_max},
\[
\M^{4 r - 4}((2, r - 1), \, 7 - r)^\st = \Hilb((2, r - 1), 4) \smallsetminus H_1^{},
\]
where
\begin{align*}
H_1^{} & = \{ (C, Z) \mid C = C' \cup L', \ \deg C' = (2, r - 2), \ \deg L' = (0, 1), \ \length Z = 4, \ Z \subset L' \} \\
& \simeq \left| \Osh(2, r - 2) \right| \times \Hilb((0, 1), 4) \simeq \left| \Osh(2, r - 2) \right| \times \PP^1 \times \PP^4.
\end{align*}
Recall from Notation~\ref{nested} the variety $\Hilb(0, (2, r - 1), 4)$.
Its set of closed points is identified with
\[
\{ (C, Z, L) \mid C = C'' \cup L, \ \deg C'' = (2, r - 2), \ \deg L = (0, 1), \ \length Z = 4, \ Z \subset C'' \}.
\]
Using B\'ezout's theorem, we can easily show that
\[
\Hilb(0, (2, r - 1), 4) \cap (H_1^{} \times \left| \Osh(0, 1) \right|) = H_2^{},
\]
where
\begin{align*}
H_2^{} & = \{ (C, Z, L) \mid C = C_0^{} \cup L \cup L', \ \deg C_0^{} = (2, r - 3), \ \deg L = \deg L' = (0, 1), \ Z \subset L' \} \\
& \simeq \left| \Osh(2, r - 3) \right| \times \Hilb((0, 1), 4) \times \left| \Osh(0, 1) \right|
\simeq \left| \Osh(2, r - 3) \right| \times \PP^1 \times \PP^4 \times \left| \Osh(0, 1) \right|.
\end{align*}
According to Theorem~\ref{euler_loci}(i),
\[
\eulertop \big(\fiberR Y_{3, r - 1}^{}\big) = 10 \eulertop (\M^{4 r - 4}((2, r - 1), \, 7 - r)^\st).
\]
Arguing as in the proof of Proposition~\ref{r-1}(ii), we obtain the equation
\begin{align*}
\eulertop(\fiberL Y_{3, r - 1}^{}) & = 2  \eulertop(Y_{3, r - 1}^{}) + \eulertop(\Hilb(0, (2, r - 1), 4)) - \eulertop(H_2^{}) \\
& = 4 \eulertop(\M^{4 r - 4}((2, r - 1), \, 7 - r)^\st) + \eulertop(\Hilb(0, (2, r - 1), 4)) - \eulertop(H_2^{}).
\end{align*}
Thus,
\begin{align*}
\eulertop (\fiberL & Y_{3, r - 1}^{}) - \eulertop(\fiberR Y_{3, r - 1}^{}) \\
& = -6 \eulertop(\M^{4 r - 4}((2, r - 1), \, 7 - r)^\st) + \eulertop(\Hilb(0, (2, r - 1), 4)) - \eulertop(H_2^{}) \\
& = -6 \eulertop(\Hilb((2, r - 1), 4)) + 60 \eulertop(\Hilb((2, r - 2), 0)) + \eulertop(\Hilb(0, (2, r - 1), 4)) - 60 (r - 2).
\end{align*}
We have
\[
Y_{3, r - 2}^{} = \M^{4 r - 4}((2, r - 2), \, 4 - r)^\st \times \M((0, 2), 6).
\]
In view equation~\eqref{r+t=2}, we have the isomorphisms
\[
\M^{4 r - 4}((2, r - 2), \, 4 - r)^\st \simeq \M^\infty((2, r - 2), \, 4 - r) \simeq \left| \Osh(2, r - 2) \right|.
\]
Consider $\Lambda' = (\H^0(\Osh_C^{}), \Osh_C^{}) \in \M^\infty((2, r - 2), \, 4 - r)$.
Consider distinct lines $L_1^{}$ and $L_2^{} \subset X$ of degree $(0, 1)$.
According to Theorem~\ref{euler_fibers}(ii) and Proposition~\ref{Xi_calculation}(ii), we have the equation
\begin{multline*}
\eulertop(\fiberL (\langle \Lambda' \rangle, \langle \Osh_{L_1}^{}(2, 0) \oplus \Osh_{L_2}^{}(2, 0) \rangle))
= \Xi_\Lt^{}(\Lambda', L_1^{}, 2, \{ 1 \}) \, \Xi_\Lt^{}(\Lambda', L_2^{}, 2, \{ 1 \}) \\
= (\hom(\Osh_C^{}, \Osh_{L_1}^{}) + 2) (\hom(\Osh_C^{}, \Osh_{L_2}^{}) + 2)
= \begin{cases}
4 & \text{if $L_1^{} \nsubseteq C$ and $L_2^{} \nsubseteq C$}, \\
6 & \text{if $L_1^{} \subset C$ or $L_2^{} \subset C$, but $L_1^{} \cup L_2^{} \nsubseteq C$}, \\
9 & \text{if $L_1^{} \cup L_2^{} \subset C$}.
\end{cases}
\end{multline*}
Consider a line $L \subset X$ of degree $(0, 1)$.
According to loc.cit., we have the equation
\begin{multline*}
\eulertop(\fiberL (\langle \Lambda' \rangle, \langle 2 \Osh_L^{}(2, 0) \rangle))
= \Xi_\Lt^{}(\Lambda', L, 2, \{ 1, 1 \}) + \Xi_\Lt^{}(\Lambda', L, 2, \{ 2 \}) \\
= \binom{\hom(\Osh_C^{}, \Osh_L^{}) + 2}{2} + (\hom(\Osh_C^{}, \Osh_{2 L}^{}) - \hom(\Osh_C^{}, \Osh_L^{}) + 2)
= \begin{cases}
3 & \text{if $L \nsubseteq C$}, \\
5 & \text{if $L \subset C$ but $2 L \nsubseteq C$}, \\
6 & \text{if $2 L \subset C$}.
\end{cases}
\end{multline*}
Recall the isomorphism $\vartheta \colon \left| \Osh(0, 2) \right| \to \M((0, 2), 6)$ from Proposition~\ref{vartheta}.
For $1 \le i \le 6$, consider the locally closed subvarieties $S_i^{} \subset Y_{3, r - 2}^{}$ defined as follows:
\begin{align*}
S_1^{} & = \id \times \vartheta \{ (C, L_1^{} + L_2^{}) \mid L_1^{} \nsubseteq C, \ L_2^{} \nsubseteq C \}, \\
S_2^{} & = \id \times \vartheta \{ (C, L_1^{} + L_2^{}) \mid L_1^{} \subset C \ \text{or} \ L_2^{} \subset C, \ L_1^{} \cup L_2^{} \nsubseteq C \}, \\
S_3^{} & = \id \times \vartheta \{ (C, L_1^{} + L_2^{}) \mid L_1^{} \cup L_2^{} \subset C \}, \\
S_4^{} & = \id \times \vartheta \{ (C, 2 L) \mid L \nsubseteq C \}, \\
S_5^{} & = \id \times \vartheta \{ (C, 2 L) \mid L \subset C, \ 2 L \nsubseteq C \}, \\
S_6^{} & = \id \times \vartheta \{ (C, 2 L) \mid 2 L \subset C \}.
\end{align*}
They have Euler characteristic $0$, $6$, $3 r - 9$, $6$, $6$, respectively, $6 r - 18$.
Applying Lemma~\ref{multiplicative} and the additivity of the Euler characteristic, we obtain the equation
\[
\eulertop(\fiberL Y_{3, r - 2}^{})
= 4 \eulertop(S_1^{}) + 6 \eulertop(S_2^{}) + 9 \eulertop(S_3^{}) + 3 \eulertop(S_4^{}) + 5 \eulertop(S_5^{}) + 6 \eulertop(S_6^{})
= 63 r - 105.
\]
According to Theorem~\ref{euler_loci}(i), we have the equation
\[
\eulertop(\fiberR Y_{3, r - 2}^{}) = \binom{11}{2} \eulertop(\left| \Osh(2, r - 2) \right|) = 165 r - 165.
\]
Combining the last two equations yields the formula
\[
\eulertop(\fiberL Y_{3, r - 2}^{}) - \eulertop(\fiberR Y_{3, r - 2}^{}) = -102 r + 60.
\]
Applying Proposition~\ref{0+}, we obtain the equation
\begin{align*}
\eulertop(\M^{0+}((2, r), \, 10 - r)) & = \eulertop(\Hilb((2, r), 8)) \\
& \phantom{{} =} {} - \phantom{1} 14 \eulertop(\Hilb((2, r - 1), 0)) + \phantom{1} 5 \eulertop(\Hilb(0, (2, r - 1), 0)) \\
& \phantom{{} =} {} - \phantom{1} 12 \eulertop(\Hilb((2, r - 1), 1)) + \phantom{1} 4 \eulertop(\Hilb(0, (2, r - 1), 1)) \\
& \phantom{{} =} {} - \phantom{1} 10 \eulertop(\Hilb((2, r - 1), 2)) + \phantom{1} 3 \eulertop(\Hilb(1, (2, r - 1), 1)) \\
& \phantom{{} =} {} - \phantom{11} 8 \eulertop(\Hilb((2, r - 1), 3)) + \phantom{1} 3 \eulertop(\Hilb(0, (2, r - 1), 2)) \\
& \phantom{{} =} {} - \phantom{11} 6 \eulertop(\Hilb((2, r - 1), 4)) + \phantom{1} 2 \eulertop(\Hilb(1, (2, r - 1), 2)) \\
& \phantom{{} =} {} - \phantom{11} 4 \eulertop(\Hilb((2, r - 1), 5)) + \phantom{11} \eulertop(\Hilb(2, (2, r - 1), 2)) \\
& \phantom{{} =} {} - \phantom{11} 2 \eulertop(\Hilb((2, r - 1), 6)) + \phantom{1} 2 \eulertop(\Hilb(0, (2, r - 1), 3)) \\
& \phantom{{} =} \phantom{{} - 112 \eulertop(\Hilb((2, r - 1), 6))} + \phantom{11} \eulertop(\Hilb(1, (2, r - 1), 3)) \\
& \phantom{{} =} \phantom{{} - 112 \eulertop(\Hilb((2, r - 1), 6))} + \phantom{11} \eulertop(\Hilb(0, (2, r - 1), 4)) \\
& \phantom{{} =} {} + 112 \eulertop(\Hilb((2, r - 2), 0)) - 14 \eulertop(\Hilb(0, (2, r - 2), 0)) \\
& \phantom{{} =} {} + \phantom{1} 40 \eulertop(\Hilb((2, r - 2), 1)) - \phantom{1} 8 \eulertop(\Hilb(0, (2, r - 2), 1)) \\
& \phantom{{} =} {} + \phantom{1} 18 \eulertop(\Hilb((2, r - 2), 2)) - \phantom{1} 2 \eulertop(\Hilb(1, (2, r - 2), 1)) \\
& \phantom{{} =} {} + \phantom{11} 8 \eulertop(\Hilb((2, r - 2), 3)) - \phantom{1} 2 \eulertop(\Hilb(0, (2, r - 2), 2)) \\
& \phantom{{} =} {} + \phantom{111} \eulertop(\Hilb((2, r - 2), 4)) \\
& \phantom{{} =} {} - \phantom{1} 18 \eulertop(\Hilb((2, r - 3), 0)) + \phantom{11} \eulertop(\Hilb(0, (2, r - 3), 0)) \\
& \phantom{{} =} {} - \phantom{11} 4 \eulertop(\Hilb((2, r - 3), 1)) \\
& \phantom{{} =} {} - 162 r + 180.
\end{align*}

\subsection{The final calculation.}
\label{the_end}
According to \cite[Theorem 6.4]{hilbert}, we have the following formulas:
\begin{alignat*}{2}
\eulertop(\Hilb((2, r), 1)) & = \phantom{00}12 r + \phantom{0000}8 \qquad && \text{for $r \ge 0$}, \\
\eulertop(\Hilb((2, r), 2)) & = \phantom{00}42 r + \phantom{000}14 && \text{for $r \ge 1$}, \\
\eulertop(\Hilb((2, r), 3)) & = \phantom{0}120 r && \text{for $r \ge 2$}. \\
\intertext{According to \cite[Table 4]{hilbert}, we have the following formulas:}
\eulertop(\Hilb((2, r), 4)) & = \phantom{0}315 r - \phantom{000}95 && \text{for $r \ge 3$}, \\
\eulertop(\Hilb((2, r), 5)) & = \phantom{0}756 r - \phantom{00}444 && \text{for $r \ge 4$}, \\
\eulertop(\Hilb((2, r), 6)) & = 1722 r - \phantom{0}1476 && \text{for $r \ge 5$}, \\
\eulertop(\Hilb((2, r), 7)) & = 3720 r - \phantom{0}4148 && \text{for $r \ge 6$}, \\
\eulertop(\Hilb((2, r), 8)) & = 7740 r - 10542 && \text{for $r \ge 7$}.
\end{alignat*}
According to Remark~\ref{k=0} and Propositions~\ref{k=l}, \ref{k=1,l=2} and \ref{k=1,l=3}, we have the following formulas:
\begin{align*}
\eulertop(\Hilb(0, (2, r), l)) & = 2 \eulertop(\Hilb((2, r - 1), l)) \qquad \text{for $r \ge 1$}, \\
\eulertop(\Hilb(1, (2, r), 1)) & = \phantom{0}4 \qquad \text{for $r \ge 1$}, \\
\eulertop(\Hilb(2, (2, r), 2)) & = \phantom{0}4 \qquad \text{for $r \ge 1$}, \\
\eulertop(\Hilb(1, (2, r), 2)) & = 20 \qquad \text{for $r \ge 2$}, \\
\eulertop(\Hilb(1, (2, r), 3)) & = 76 \qquad \text{for $r \ge 3$}.
\end{align*}
Substituting the above formulas into the equations found in sections~\ref{c<10} and \ref{c=10}, we obtain the following expressions:
\begin{alignat}{2}
\label{r+t=3}
\eulertop(\M^{0+}((2, r), \, 3 - r)) & = \phantom{00}12 r + \phantom{000}8 \qquad && \text{for $r \ge 2$}, \\
\label{r+t=4}
\eulertop(\M^{0+}((2, r), \, 4 - r)) & = \phantom{00}36 r + \phantom{00}14 \qquad && \text{for $r \ge 3$}, \\
\label{r+t=5}
\eulertop(\M^{0+}((2, r), \, 5 - r)) & = \phantom{00}84 r + \phantom{000}8 \qquad && \text{for $r \ge 4$}, \\
\label{r+t=6}
\eulertop(\M^{0+}((2, r), \, 6 - r)) & = \phantom{0}174 r - \phantom{00}32 \qquad && \text{for $r \ge 5$}, \\
\label{r+t=7}
\eulertop(\M^{0+}((2, r), \, 7 - r)) & = \phantom{0}324 r - \phantom{0}152 \qquad && \text{for $r \ge 6$}, \\
\label{r+t=8}
\eulertop(\M^{0+}((2, r), \, 8 - r)) & = \phantom{0}564 r - \phantom{0}422 \qquad && \text{for $r \ge 7$}, \\
\label{r+t=9}
\eulertop(\M^{0+}((2, r), \, 9 - r)) & = \phantom{0}924 r - \phantom{0}952 \qquad && \text{for $r \ge 8$}, \\
\label{r+t=10}
\eulertop(\M^{0+}((2, r), \, 10 - r)) & = 1449 r - 1897 \qquad && \text{for $r \ge 9$}.
\end{alignat}
By the argument of \cite[Proposition 4.2.9]{choi_thesis}, one can show
that, for all $r$ and $s$, we have the equation
\[
\eulertop(\M((s, r), 1)) = \eulertop(\M^{0+}((s, r), 1)) - \eulertop(\M^{0+}((s, r), -1)).
\]
This is a direct consequence of the duality isomorphism $\M((s, r), 1) \simeq \M((s, r), -1)$ from \cite[Theorem 5]{dedicata_2}
and of the isomorphism~\eqref{phi}.
At equations~\eqref{r+t=2} and \eqref{r+t=3}--\eqref{r+t=10},
we have calculated the r.h.s.\ for $s = 2$ and $2 \le r \le 9$.
We obtain the following expressions:
\begin{align}
\label{r=2}
\eulertop(\M((2, 2), 1)) & = \phantom{00}32, \\
\label{r=3}
\eulertop(\M((2, 3), 1)) & = \phantom{0}110, \\
\label{r=4}
\eulertop(\M((2, 4), 1)) & = \phantom{0}288, \\
\label{r=5}
\eulertop(\M((2, 5), 1)) & = \phantom{0}644, \\
\label{r=6}
\eulertop(\M((2, 6), 1)) & = 1280, \\
\label{r=7}
\eulertop(\M((2, 7), 1)) & = 2340, \\
\label{r=8}
\eulertop(\M((2, 8), 1)) & = 4000, \\
\label{r=9}
\eulertop(\M((2, 9), 1)) & = 6490.
\end{align}
Equation~\eqref{r=2} is also a direct consequence of \cite[Proposition 12]{ballico_huh}.
Equations~\eqref{r=3} and \eqref{r=4} have also been obtained at \cite[Theorem 3]{dedicata_2}, respectively, at \cite[Theorem 1.2]{advances}.

\end{document}